        \LetLtxMacro\oldref{\ref}%
        \DeclareRobustCommand{\ref}[2][]{(\oldref#1{#2})}%
\newtheoremstyle{thmstyle}{}{}{\itshape}{}{\bfseries}{.}{5pt}{}
\newtheoremstyle{exstyle}{}{}{}{}{\bfseries}{.}{5pt}{}
\newtheoremstyle{defstyle}{}{}{}{}{\bfseries}{.}{5pt}{}
\newtheoremstyle{remstyle}{}{}{}{}{\bfseries}{.}{5pt}{}
\newtheoremstyle{proofstyle}{}{}{}{}{\bfseries}{.}{5pt}{}
\theoremstyle{thmstyle}
\newtheorem{thm}{Theorem}[section]
\newtheorem{theorem}[thm]{Theorem}
\def\thmautorefname~#1\null{Theorem~#1\null}
\newaliascnt{lemma}{thm}
\newtheorem{lemma}[lemma]{Lemma}
\def\lemmaautorefname~#1\null{Lemma~#1\null}
\newaliascnt{proposition}{thm}
\newtheorem{proposition}[proposition]{Proposition}
\def\propositionautorefname~#1\null{Proposition~#1\null}
\newaliascnt{corollary}{thm}
\newtheorem{corollary}[corollary]{Corollary}
\def\corollaryautorefname~#1\null{Corollary~#1\null}
\newaliascnt{conjecture}{thm}
\newtheorem{conjecture}[conjecture]{Conjecture}
\def\conjectureautorefname~#1\null{Conjecture~#1\null}
\def\equationautorefname~#1\null{\text{(#1)\null}}
\def\figureautorefname~#1\null{Figure~#1\null}
\def\itemautorefname~#1\null{(#1)\null}
\def\sectionautorefname~#1\null{Section~#1\null}
\def\subsectionautorefname~#1\null{Subsection~#1\null}
\theoremstyle{exstyle}
\newaliascnt{example}{thm}
\newtheorem{example}[example]{Example}
\def\exampleautorefname~#1\null{Example~#1\null}
\newaliascnt{remark}{thm}
\newtheorem{remark}[remark]{Remark}
\def\remarkautorefname~#1\null{Remark~#1\null}
\theoremstyle{defstyle}
\newaliascnt{defn}{thm}
\newtheorem{defn}[defn]{Definition}
\def\defnautorefname~#1\null{Definition~#1\null}
\newtheoremstyle{named}{}{}{\itshape}{}{\bfseries}{.}{.5em}{\thmnote{#3}}
\theoremstyle{named}
\definecolor{ablue}{RGB}{0,39,236}
\newcommand{\definition}[1]{\textcolor{ablue}{\textit{#1}}}
\newcommand{\st}{\mid}
\newcommand{\set}[2]{\left\{#1 \st #2 \right\}} 
\newcommand{\biggset}[2]{\bigg\{ #1 \;\bigg|\; #2 \bigg\}} 
\newcommand{\gen}[1]{\left< #1 \right>}
\newcommand{\coloneqq}{\mbox{\,\raisebox{0.2ex}{\scriptsize\ensuremath{\mathrm:}}\ensuremath{=}\,}} 
\newcommand{\setm}{\smallsetminus} 
\newcommand{\R}{\mathbb{R}} 
\newcommand{\Z}{\mathbb{Z}} 
\newcommand{\Sbb}{\mathbb{S}} 
\newcommand{\meet}{\wedge} 
\newcommand{\join}{\vee} 
\newcommand{\cover}{\lessdot} 
\DeclareMathOperator{\ji}{JIrr}
\DeclareMathOperator{\mi}{MIrr}
\DeclareMathOperator{\op}{op}
\newcommand{\arrangement}{\mathcal{A}} 
\newcommandx{\regions}[1][1=\arrangement]{\mathscr{R}_{\!#1}} 
\newcommandx{\faces}[1][1=\arrangement]{\mathscr{F}_{\!\!#1}} 
\newcommandx{\zonotope}[1][1=\arrangement]{\mathbf{Z}_{\!#1}} 
\newcommand{\boundary}{\mathscr{B}} 
\DeclareMathOperator{\rank}{rank} 
\newcommand{\rootInversionSet}{\mathbf{R}} 
\newcommand{\intersection}{\mathcal{I}} 
\newcommand{\covectors}{\mathcal{L}} 
\DeclareMathOperator{\vspan}{span} 
\DeclareMathOperator{\cone}{cone} 
\let\int\relax
\DeclareMathOperator{\int}{int} 
\DeclareMathOperator{\codim}{codim} 
\DeclareMathOperator{\pricone}{\mathbf{C}} 
\DeclareMathOperator{\norcone}{\mathbf{C}^\diamond} 
\DeclareMathOperator{\PR}{PR} 
\newcommandx{\pr}[2][1=\arrangement, 2=B]{\PR \! \left( #1,#2 \right)} 
\newcommand{\prle}{\le_{\PR}} 
\newcommand{\prcover}{\cover_{\PR}} 
\newcommand{\prmeet}{\meet_{\PR}} 
\newcommand{\prjoin}{\join_{\PR}} 
\DeclareMathOperator{\FW}{FW} 
\newcommandx{\fw}[2][1=\arrangement, 2=B]{\FW \! \left( #1,#2 \right)} 
\newcommand{\fwle}{\le_{\FW}} 
\newcommand{\fwl}{<_{\FW}} 
\newcommand{\fwcover}{\cover_{\FW}} 
\newcommand{\fwmeet}{\meet_{\FW}} 
\newcommand{\fwjoin}{\join_{\FW}} 
\newcommand{\rwo}{\le_{R}} 
\newcommand{\cole}{\le_\covectors} 
\newcommand{\col}{<_\covectors} 
\DeclareMathOperator{\del}{del} 
\DeclareMathOperator{\lk}{lk} 
\DeclareMathOperator{\rk}{rk} 
\DeclareMathOperator{\susp}{susp} 
\newcommand{\ie}{\textit{i.e.},~} 
\newcommand{\eg}{\textit{e.g.}~} 
\newcommand{\iso}{\cong} 
\DeclarePairedDelimiter\abs{\lvert}{\rvert}%
\DeclarePairedDelimiter\norm{\lVert}{\rVert}%
\def\l@section{\@tocline{1}{3pt}{0pc}{}{}}
\let\oldtocpart=\tocpart
\renewcommand{\tocpart}[2]{\bf\large\oldtocpart{#1}{#2}}
\let\oldtocsection=\tocsection
\renewcommand{\tocsection}[2]{\bf\oldtocsection{#1}{#2}}
\title[The Facial Weak Order on Hyperplane Arrangements]{The Facial Weak Order \\ on Hyperplane Arrangements}
\thanks{AD was supported by a FRQNT scholarship. CH was supported by NSERC Discovery grant {\em geometric and algebraic combinatorics of Coxeter groups}. VP was supported by French ANR grants SC3A~(15\,CE40\,0004\,01) and CAPPS~(17\,CE40\,0018).}
\author[A.~Dermenjian]{Aram Dermenjian} 
\address[A.~Dermenjian]{LaCIM, Universit\'e du Qu\'ebec \`A Montr\'eal}
\email{aram.dermenjian@gmail.com}
\urladdr{http://dermenjian.com}
\author[C.~Hohlweg]{Christophe Hohlweg} 
\address[C.~Hohlweg]{LaCIM, Universit\'e du Qu\'ebec \`A Montr\'eal}
\email{hohlweg.christophe@uqam.ca}
\urladdr{http://hohlweg.math.uqam.ca/en/about-this-site/}
\author[T.~McConville]{Thomas McConville} 
\address[T.~McConville]{Massachusetts Institute of Technology, Cambridge}
\email{thomasmc@mit.edu}
\urladdr{http://math.mit.edu/~thomasmc/}
\author[V.~Pilaud]{Vincent Pilaud} 
\address[V.~Pilaud]{CNRS \& LIX, \'Ecole Polytechnique, Palaiseau}
\email{vincent.pilaud@lix.polytechnique.fr}
\urladdr{http://www.lix.polytechnique.fr/~pilaud/}
\begin{document}

\begin{abstract}
    We extend the facial weak order from finite Coxeter groups to central hyperplane arrangements.
    The facial weak order extends the poset of regions of a hyperplane arrangement to all its faces.
    We provide four non-trivially equivalent definitions of the facial weak order of a central arrangement: (1)~by exploiting the fact that the faces are intervals in the poset of regions, (2)~by describing its cover relations, (3)~using covectors of the corresponding oriented matroid, and (4)~using certain sets of normal vectors closely related to the geometry of the corresponding zonotope.
    Using these equivalent descriptions, we show that when the poset of regions is a lattice, the facial weak order is a lattice.
    In the case of simplicial arrangements, we further show that this lattice is semidistributive and give a description of its join-irreducible elements.
    Finally, we determine the homotopy type of all intervals in the facial weak order.
\end{abstract}

\vspace*{-.4cm}
\maketitle

\tableofcontents


\section{Introduction}

A \definition{hyperplane arrangement} is a finite collection $\arrangement$ of linear hyperplanes in a finite dimensional real vector space $V$.
Its \definition{regions} are the closures of the connected components of the complement in $V$ of the union of all hyperplanes in $\arrangement$.
A region is \definition{simplicial} if the normal vectors to its bounding hyperplanes are linearly independent, and the arrangement is \definition{simplicial} if all its regions are.
The \definition{zonotope} of the arrangement~$\arrangement$ is a convex polytope dual to the arrangement~$\arrangement$, obtained as the Minkowski sum of line segments normal to the hyperplanes of~$\arrangement$.

The regions of a hyperplane arrangement~$\arrangement$ can be ordered as follows.
Define the \definition{separation set}~$S(R,R')$ between two regions~$R$ and~$R'$ of~$\arrangement$ as the set of hyperplanes of~$\arrangement$ separating the two regions~$R$ and~$R'$.
For a fixed base region $B$, the \definition{poset of regions} is the set of regions of~$\arrangement$ ordered by inclusion of their separation sets~$S(B,R)$ with the base region~$B$.
A.~Bj\"orner, P.~H.~Edelman and G.~M.~Ziegler \cite{BjornerEdelmanZiegler} showed that the poset of regions is a lattice if $\arrangement$ is simplicial, and that the base region~$B$ is simplicial if the poset of regions is a lattice.
The Hasse diagram of the poset of regions can also be seen as the graph of the zonotope of~$\arrangement$, oriented from the base region~$B$ to its opposite region~$-B$.

In this paper, we extend the study of the \definition{facial weak order} $\fw$, as introduced in \cite{DermenjianHohlwegPilaud} for Coxeter arrangements.
This order is a poset structure on the faces of the hyperplane arrangement~$\arrangement$ or, equivalently, of the zonotope of~$\arrangement$.
It was first introduced by D.~Krob, M.~Latapy, J.-C.~Novelli, H.-D.~Phan, and S.~Schwer in \cite{KrobLatapyNovelliPhanSchwer} for the braid arrangement (the Coxeter arrangement of type $A$) where it was shown to be a lattice.
It was then extended to arbitrary Coxeter arrangements by P.~Palacios and M.~Ronco in~\cite{PalaciosRonco} and it was shown to be a lattice for arbitrary Coxeter arrangements in \cite{DermenjianHohlwegPilaud}.
The aims of this article are to extend the facial weak order to central hyperplane arrangements.

The first part of this article, contained in \autoref{sec:Facial_Weak_Order_on_the_poset_of_regions} and \autoref{sec:Geometric_interpretations}, is dedicated to providing four equivalent definitions for the facial weak order on a given central hyperplane arrangement:
\begin{itemize}
    \item in terms of \definition{separation set} comparisons between the minimal and maximal regions incident to a face (\autoref{subsec:facial_weak_order}),
    \item by providing a precise description of its \definition{covering relations} (\autoref{subsec:cover_relations}),
    \item in terms of \definition{covectors} of the associated oriented matroid (\autoref{subsec:covectors}),
    \item and in terms of \definition{root inversion sets} of the normals to the hyperplanes of the arrangement (\autoref{subsec:root_inversion_sets}), closely related to the geometry of the corresponding zonotope (\autoref{subsec:zonotopes}).
\end{itemize}
We prove these four definitions to be equivalent in~\autoref{thm:equivalence1} and \autoref{thm:equivalence2}.
In the case of a Coxeter arrangement, this recovers the descriptions in~\cite{DermenjianHohlwegPilaud}.

In \autoref{sec:lattice}, we show that if the poset of regions of a hyperplane arrangement is a lattice, then the facial weak order is a lattice (\autoref{thm:simplicial_is_lattice}).
This is achieved using the BEZ lemma \cite[Lemma 2.1]{BjornerEdelmanZiegler} which states that a poset is a lattice as soon as there exists a join~$x \join y$ for every two elements $x$ and $y$ that both cover the same element.
This extends the results of~\cite{KrobLatapyNovelliPhanSchwer} for the braid arrangement and of~\cite{DermenjianHohlwegPilaud} for Coxeter arrangements.

For a general arrangement $\arrangement$, the facial weak order may not be a lattice, but its topology still admits a nice description that we study in \autoref{sec:topology}.
There are a wide variety of simplicial complexes associated to a hyperplane arrangement.
Typically, complexes that depend on the matroid structure of $\arrangement$ are homotopy equivalent to a wedge of (several) spheres, \eg the independence complex, the reduced broken circuit complex, or the lattice of flats \cite{Bjorner_homologyMatroid}.
On the other hand, complexes that depend on the \emph{oriented} matroid structure of $\arrangement$ tend to be homotopy equivalent to a single sphere or are contractible, \eg the complexes of acyclic, convex, or free sets \cite{EdelmanReinerWelker}, the poset of regions \cite{Edelman}, or the poset of cellular strings \cite{Bjorner_EssentialChains}.
We compute the homotopy types of intervals of the facial weak order (\autoref{thm_weak_top}).
Keeping with the aforementioned trends, we prove that every interval of the facial weak order is either contractible or homotopy equivalent to a sphere.

To conclude, let us mention two directions that are not explicitly explored here to keep the paper short.
First, although we use the language of oriented matroids, we only deal here with facial weak order of hyperplane arrangements.
The results presented here seem however to extend in the context of simple simplicial oriented matroids.
Second, using the same tools as in~\cite{DermenjianHohlwegPilaud}, one can observe that when the arrangement is simplicial, each lattice congruence of the poset of regions naturally translates to a lattice congruence of the facial weak order.


\section{Facial weak order on the poset of regions}
\label{sec:Facial_Weak_Order_on_the_poset_of_regions}

In this section we start by recalling classical definitions on hyperplane arrangements.
For more details, we refer the reader to  the book by P.~Orlik and H.~Terao~\cite{OrlikTerao}, the book by R.~Stanley \cite{Stanley_EnumerativeCombinatorics} and the paper by A.~Bj\"orner, P.~H.~Edelman and G.~M.~Ziegler \cite{BjornerEdelmanZiegler}.
We then introduce the \definition{facial weak order} and discuss its cover relations.


\subsection{Hyperplane arrangements}

Let $(V, \gen{\cdot, \cdot})$ be an $n$-dimensional real Euclidean vector space.
A \definition{central hyperplane arrangement}, or \definition{arrangement} for short, is a finite set~$\arrangement$ of linear hyperplanes in~$V$.
For each $H \in \arrangement$, we choose some fixed nonzero vector~$e_H$  normal to~$H$, that is, such that~$H = \set{v \in V}{\gen{e_H, v} = 0}$ (the choice of the normal vectors $e_H$ is unique up to nonzero scalar multiplication).
We also consider the two half spaces $H^+ \coloneqq \set{v \in V}{\gen{e_H, v} > 0}$ and~${H^- \coloneqq \set{v \in V}{\gen{e_H, v} < 0}}$ bounded by~$H$.

The \definition{rank} of $\arrangement$ is the dimension~$\rank(\arrangement)$ of the linear subspace~$V'$ spanned by the vectors~$e_H$, for~$H \in \arrangement$.
An arrangement $\arrangement$ is \definition{essential} if ${\rank(\arrangement) = \dim(V)}$, or equivalently if the intersection of all hyperplanes of~$\arrangement$ is the origin.
We assume our arrangements to be essential unless stated otherwise.
From a combinatorial perspective the specialization to essential arrangements causes no loss of generality.
This is due to the fact that for each arrangement $\arrangement$ of rank $m$ in $V \iso \R^n$ there is an associated essential arrangement $\arrangement'$ in $V' \iso \R^m$ whose face structure is similar.
See \cite[Section 2.1]{BjornerLasVergnasSturmfelsWhiteZiegler} for more details.

The \definition{regions} of an arrangement~$\arrangement$ are the closures of the connected components of $V \setm \left(\bigcup_{H \in \arrangement} H\right)$.
We denote by~$\regions$ the set of regions of~$\arrangement$.
A \definition{wall} of a region~$R$ in~$\arrangement$ is a bounding hyperplane~$H \in \arrangement$ of $R$, that is, $\dim(H \cap R) = \dim(V)-1$.
A region $R$ is said to be \definition{simplicial} if the normal vectors of its walls are linearly independent.
If $\arrangement$ is essential then a region is simplicial if and only if it has precisely $\rank(\arrangement)$ walls.
An arrangement is \definition{simplicial} if all its regions are simplicial.

A \definition{face} of~$\arrangement$ is the intersection of some regions of~$\arrangement$.
We denote by $\faces$ the set of faces of~$\arrangement$.
Note that the regions are the codimension $0$ faces of~$\arrangement$.
The \definition{face poset} of the arrangement~$\arrangement$ is the poset~$(\faces, \subseteq)$ of faces of~$\arrangement$ ordered by inclusion.
The \definition{face lattice} of the arrangement~$\arrangement$ is the face poset together with the vector space itself as the maximum element.
In this paper, we will consider a different poset structure on~$\faces$.

\begin{example}
    \label{ex:Coxeter1}
    Well-known examples of simplicial hyperplane arrangements are the \definition{Coxeter arrangements}.
    These are the hyperplane arrangements associated to a Coxeter system $(W,S)$.
    See \autoref{fig:CoxeterArrangements} for an illustration of the Coxeter arrangements of types~$A_3$, $B_3$ and~$H_3$.
    We refer the reader to the books~\cite{Humphreys, BjornerBrenti} for comprehensive surveys on Coxeter groups.
    \autoref{fig:A2} gives an example of the type~$A_2$ Coxeter arrangement together with its faces.
    The $R_i$ (in blue) are the six regions of the arrangement and are the codimension $0$ faces.
    There are also six codimension~$1$ faces denoted by the $F_i$ (in red) and one codimension $2$ face $\{0\}$ at the center (in green).
    \begin{figure}[t]
	    \centerline{\includegraphics[width=1.3\textwidth]{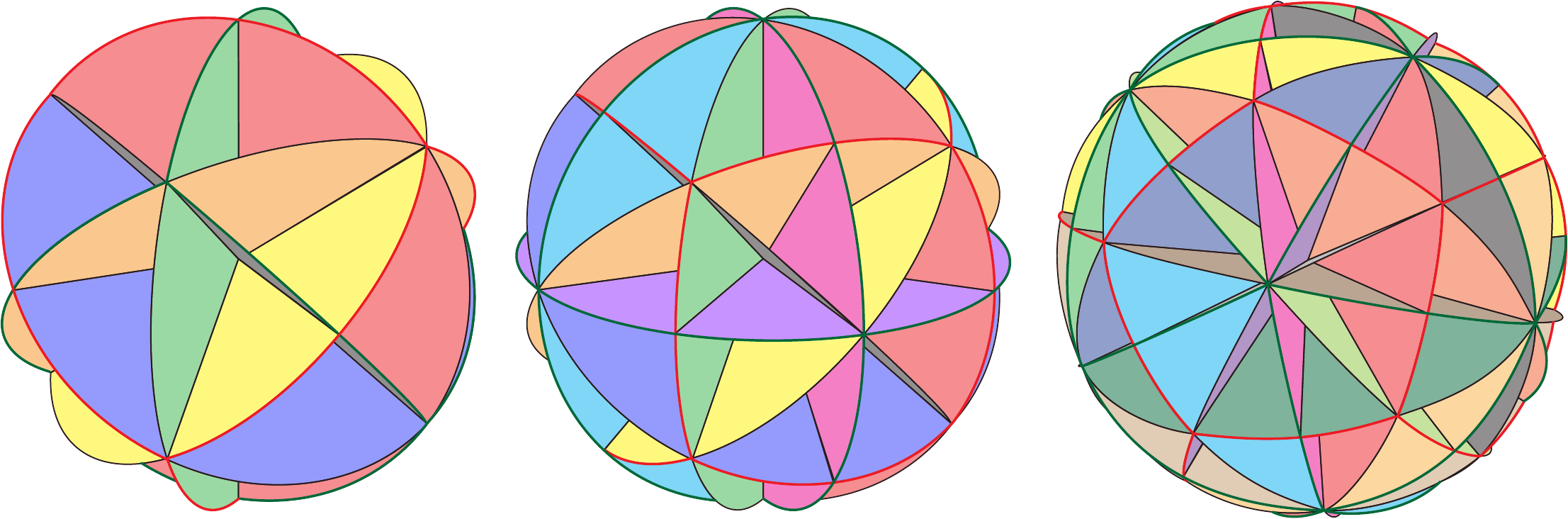}}
    	\caption{The type~$A_3$, $B_3$ and~$H_3$ Coxeter arrangements.}
	    \label{fig:CoxeterArrangements}
    \end{figure}
    \begin{figure}[b]
\DeclareDocumentCommand{\rs}{ O{1.1cm} O{->} m m O{0} O{0}} {
	\def \radius {#1}
	\def \inputPoints{#3}
	\def \excludeRoots{#4}
	\def \style {#2}
	\def \initialRotation {#5}
	\def \extraStyle {#6}

	\pgfmathtruncatemacro{\points}{\inputPoints * 2}
	\pgfmathsetmacro{\degrees}{360 / \points}
	
	\coordinate (0) at (0,0);
	
	\foreach \x in {1,...,\points}{%
		\pgfmathsetmacro{\location}{(\points+(\x-1))*\degrees + \initialRotation}
		
		\coordinate (\x) at (\location:\radius);
	}

	\ifthenelse{\equal{\excludeRoots}{}}{
		\foreach \x in {1,...,\points}{%
			\ifthenelse{\equal{\extraStyle}{0}}{
				\draw[\style] (0) -- (\x);
			}{
			\draw[\style, \extraStyle] (0) -- (\x);
		}
	}
}{
\foreach \x in {1,...,\points}{%
	\edef \showPoint {1};
	
	\foreach \y in \excludeRoots {
		\ifthenelse{\equal{\x}{\y}}{
			\xdef \showPoint {0};
		}{}
	}
	
	\ifthenelse{\equal{\showPoint}{1}}{
		\ifthenelse{\equal{\extraStyle}{0}}{
			\draw[\style] (0) -- (\x);
		}{
		\draw[\style, \extraStyle] (0) -- (\x);
	}
}{}
}
}  
}

\centerline{
    \begin{tikzpicture}
	    [
	    fdomain/.style={fill=blue!15!white,color=blue!15!white,opacity=0.3},
	    vdomain/.style={color=blue!85!white},
	    face/.style={draw=red!95!black,fill=red!95!black,color=red!95!black,ultra thick, -},
	    norm/.style={black,->},
	    vertex/.style={inner sep=1pt,circle,draw=green!85!white,fill=green!85!white,thick},
	    ]
	    %
	    \rs[1][]{3}{}[30]
	    %
	    \draw[norm] (0) -- (1) node[above right] {$-e_1$};
	    \draw[norm] (0) -- (2) node[above] {$-e_2$};
	    \draw[norm] (0) -- (3) node[above left] {$-e_3$};
	    \draw[norm] (0) -- (4) node[below left] {$e_1$};
	    \draw[norm] (0) -- (5) node[below] {$e_2$};
	    \draw[norm] (0) -- (6) node[below right] {$e_3$};
	    %
	    \rs[3.5][ultra thick]{3}{}[60]
	    \node[above right] at (1) {$H_{3}$};
	    \node[above left] at (2) {$H_{1}$};
	    \node[left] at (3) {$H_{2}$};
	    \rs[3][dashed]{3}{}[240]
	    \draw[face] (0) -- (2) node[right] {$\mathbf{F_0}$};
	    \draw[face] (0) -- (3) node[above] {$\mathbf{F_1}$};
	    \draw[face] (0) -- (4) node[right] {$\mathbf{F_2}$};
	    \draw[face] (0) -- (5) node[left] {$\mathbf{F_3}$};
	    \draw[face] (0) -- (6) node[above] {$\mathbf{F_4}$};
	    \draw[face] (0) -- (1) node[left] {$\mathbf{F_5}$};
	    %
	    %
	    \fill[fdomain] (0) -- (1) -- (2) -- cycle {};
	    \node[vdomain] at (270:2.2) {$B$};
	    \fill[fdomain] (0) -- (3) -- (2) -- cycle {};
	    \node[vdomain] at (330:2.2) {$R_1$};
	    \fill[fdomain] (0) -- (3) -- (4) -- cycle {};
	    \node[vdomain] at (30:2.2) {$R_2$};
	    \fill[fdomain] (0) -- (5) -- (4) -- cycle {};
	    \node[vdomain] at (90:2.2) {$R_3$};
	    \fill[fdomain] (0) -- (5) -- (6) -- cycle {};
	    \node[vdomain] at (150:2.2) {$R_4$};
	    \fill[fdomain] (0) -- (1) -- (6) -- cycle {};
	    \node[vdomain] at (210:2.2) {$R_5$};
	    \node[vertex] at (0) {};
    \end{tikzpicture}
}
        \caption{The type~$A_2$ Coxeter arrangement where $B$ is the intersection of the positive half-spaces of all hyperplanes. See \autoref{ex:Coxeter1}.}
	    \label{fig:A2}
    \end{figure}
\end{example}


\subsection{Poset of regions}
\label{subsec:poset_of_regions}

Consider an arrangement $\arrangement$.
The \definition{separation set} of two regions $R, R' \in \regions$ is
\[
	S(R,R') \coloneqq \set{H \in \arrangement}{H \text{ separates } R \text{ from } R'}.
\]
We now choose $B$ to be a distinguished region of~$\arrangement$ called the \definition{base region}, and abbreviate~$S(B,R)$ into~$S(R)$.
The \definition{poset of regions} with respect to $B$ is the partial order~$\pr = (\regions, \prle)$ on the regions~$\regions$ of the arrangement~$\arrangement$ defined by
\[
	R \prle R' \quad \iff \quad S(R) \subseteq S(R').
\]
The poset of regions is graded by the cardinality  of the separation set $\abs{S(R)}$ of a region $R$.
The base region~$B$ is its minimum element and has rank ${\abs{S(B)} = \abs{\varnothing} = 0}$, and its opposite region~$-B$ is its maximum element and has rank~$\abs{S(-B)} = \abs{\arrangement}$.
Additionally, we have the following statement, see \eg \cite[Proposition 2.1]{Edelman}.

\begin{proposition}
    \label{prop:por_selfdual}
    The map ${R \mapsto -R \coloneqq \set{-v}{v \in R}}$ is a self-duality of the poset of regions $\pr$.
\end{proposition}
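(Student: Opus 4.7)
The plan is to show two things: first that $R \mapsto -R$ is an involution on $\regions$ (hence a bijection), and second that it reverses the order $\prle$. The first point is immediate since $-(-R) = R$ and $-R$ is indeed a region (the arrangement is central, so the hyperplanes pass through the origin and the map $v \mapsto -v$ permutes the connected components of the complement).

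For the order-reversing property, the key observation I would establish first is the identity
\[
	S(B,-R) = \arrangement \setm S(B,R) \qquad \text{for every } R \in \regions.
\]
Indeed, since each $H \in \arrangement$ is linear, the region $-R$ lies in the half-space opposite to the one containing $R$; equivalently, $R \subseteq H^+$ if and only if $-R \subseteq H^-$. Therefore $H$ separates $B$ from $R$ if and only if $H$ does not separate $B$ from $-R$, giving the displayed identity.

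With this in hand, the conclusion is formal: for any $R, R' \in \regions$,
\[
	R \prle R' \iff S(B,R) \subseteq S(B,R') \iff \arrangement \setm S(B,R') \subseteq \arrangement \setm S(B,R) \iff S(B,-R') \subseteq S(B,-R) \iff -R' \prle -R,
\]
which is precisely the statement that $R \mapsto -R$ is an anti-automorphism of $\pr$.

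I do not anticipate a real obstacle here; the only subtlety worth highlighting is why the complementation identity holds, which uses in an essential way the centrality of the arrangement (so that $-R$ is again a region and lies on the opposite side of every hyperplane from $R$). Everything else is a one-line manipulation of set inclusions.
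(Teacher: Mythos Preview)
Your argument is correct and is the standard one. The paper does not actually supply its own proof of this proposition; it simply refers the reader to \cite[Proposition~2.1]{Edelman}, so there is nothing to compare against beyond noting that your complementation identity $S(B,-R) = \arrangement \setm S(B,R)$ is exactly the ingredient used in that reference as well.
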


The reader is referred to \autoref{subsubsec:duality} for a definition of self-dual.
It is known that posets of regions associated to simplicial arrangements are lattices.

\begin{theorem}[{\cite[Theorems 3.1 and 3.4]{BjornerEdelmanZiegler}}]
	\label{thm:simplicial_implies_lattice}
    Suppose $\arrangement$ is essential.
	If the poset of regions $\pr$ is a lattice then the base region~$B$ is a simplicial region.
	Moreover, if $\arrangement$ is a simplicial arrangement then the poset of regions $\pr$ is a lattice for an arbitrary choice of base region~$B$.
\end{theorem}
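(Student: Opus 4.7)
The plan is to handle the two implications separately.

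\emph{Lattice implies $B$ simplicial.} Assume $\pr$ is a lattice and argue by contradiction: suppose $B$ has walls $H_1, \dots, H_k$ with $k > \dim V$, so their outward normals $n_1, \dots, n_k$ are linearly dependent (such $k$ walls must exist because essentiality forces the normals of the walls of $B$ to span $V$, while non-simpliciality forbids independence). Each $H_i$ produces a region $R_i$ with $S(R_i) = \{H_i\}$, and the $R_i$ are precisely the atoms of $\pr$. The strategy is to pick a minimal linear dependence and partition its support into two nontrivial pieces $I^+, I^-$ with positive coefficients on each side; one then shows that a suitable pair $R_i, R_j$ with $i \in I^+$ and $j \in I^-$ admits two distinct minimal upper bounds, obtained by crossing walls in the two incompatible ways dictated by $I^+$ and $I^-$, contradicting the existence of $R_i \join R_j$.

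\emph{Simplicial implies lattice.} Apply the BEZ lemma (recalled in the introduction): a finite bounded graded poset is a lattice as soon as any two elements covering a common element admit a join. Let $Q \in \regions$ and let $R, R' \in \regions$ both cover $Q$ in $\pr$, so that $S(R) = S(Q) \cup \{H\}$ and $S(R') = S(Q) \cup \{H'\}$ for two walls $H, H'$ of $Q$. Because $Q$ is simplicial, $H$ and $H'$ are linearly independent walls and $F = Q \cap H \cap H'$ is a genuine codimension-$2$ face. The localization of $\arrangement$ at $F$ is a rank-$2$ central subarrangement, in which the local sector of $Q$ is bounded exactly by $H$ and $H'$ since $Q$ is simplicial. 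Crossing from $Q$ past both $H$ and $H'$ within this rank-$2$ picture yields a unique region $T \in \regions$, which we claim equals $R \join R'$. Any common upper bound $U$ of $R, R'$ has $S(U) \supseteq S(Q) \cup \{H, H'\}$; the cyclic rank-$2$ structure of the localization then forces $S(U)$ to contain every hyperplane separating $Q$ from $T$ in that localization, yielding $S(U) \supseteq S(T)$, \ie $T \prle U$.

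\emph{Main obstacle.} The delicate point in both directions is tracking how separation sets behave under hyperplane crossings. In the first implication, producing two incomparable minimal upper bounds of two atoms requires carefully using the linear dependence among the $n_i$ to identify which further hyperplanes must be crossed along each candidate path. In the second, the crux is verifying $S(T) \subseteq S(U)$ for every common upper bound $U$; this rests on the fact, specific to rank-$2$ subarrangements, that the hyperplanes separating $Q$ from $T$ form a consecutive block between $H$ and $H'$ in the cyclic order, so any region crossing the two outer hyperplanes must cross all the intermediate ones as well.
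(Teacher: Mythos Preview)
The paper does not prove this theorem; it is quoted from \cite{BjornerEdelmanZiegler} (their Theorems~3.1 and~3.4) and used as a black box throughout. So there is no in-paper proof to compare against, but your outline is recognisably the BEZ argument itself.

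Your second implication (simplicial $\Rightarrow$ lattice) is essentially the original proof: localize at the codimension-$2$ face $F = Q \cap H \cap H'$, which exists and is genuinely of codimension $2$ precisely because $Q$ is simplicial, and use the rank-$2$ cyclic structure to identify the join as the region antipodal to $Q$ in $\arrangement_F$. The verification that $S(T) \subseteq S(U)$ for every common upper bound $U$ is exactly the ``consecutive block'' observation you describe, and that part is sound. (The present paper later reuses the same localization-to-rank-$2$ idea in the proof of \autoref{prop:sublattice}.)

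Your first implication, however, is only a plan, not a proof. You correctly extract a minimal dependence among the wall normals and split its support into $I^+$ and $I^-$, but the sentence ``one then shows that a suitable pair $R_i, R_j$ \dots\ admits two distinct minimal upper bounds, obtained by crossing walls in the two incompatible ways dictated by $I^+$ and $I^-$'' is doing all of the work and none of it is justified. Which pair $(i,j)$? What precisely are the two candidate upper bounds, as regions of $\arrangement$? Why are they both \emph{minimal} upper bounds of $R_i$ and $R_j$, and why are they incomparable rather than one sitting below the other? The linear dependence constrains the geometry of the cone $B$, but converting that into two explicit regions with the required incomparability needs an actual construction and an argument about separation sets; in the BEZ paper this is the substance of their Theorem~3.1. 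As written, your proposal has not bridged the gap between ``the wall normals are dependent'' and ``some join fails to exist''.
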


\begin{example}
	\label{ex:Coxeter2}
    Following with \autoref{ex:Coxeter1}, the Hasse diagram of the poset of regions of a type $A_2$ Coxeter arrangement is given in \autoref{fig:A2_poset}.
    For Coxeter arrangements, the poset of regions is nothing more than the (right) weak order where the separation sets can be seen as inversion sets~\cite{Humphreys, BjornerBrenti}.
    In this example, we see that $R_1 \prle R_2$ since $S(R_1) = \{H_1\} \subseteq \{H_1, H_2\} = S(R_2)$, but $R_5 \not\prle R_2$ since $S(R_5) = \{H_3\} \not \subseteq \{H_1, H_2\} = S(R_2)$.
    The minimal element is~$B$, the maximal element is~$-B = R_3$.
\begin{figure}[b]
\DeclareDocumentCommand{\rs}{ O{1.1cm} O{->} m m O{0} O{0}} {
	\def \radius {#1}
	\def \inputPoints{#3}
	\def \excludeRoots{#4}
	\def \style {#2}
	\def \initialRotation {#5}
	\def \extraStyle {#6}

	\pgfmathtruncatemacro{\points}{\inputPoints * 2}
	\pgfmathsetmacro{\degrees}{360 / \points}
	
	\coordinate (0) at (0,0);
	
	\foreach \x in {1,...,\points}{%
		\pgfmathsetmacro{\location}{(\points+(\x-1))*\degrees + \initialRotation}
		
		\coordinate (\x) at (\location:\radius);
	}

	\ifthenelse{\equal{\excludeRoots}{}}{
		\foreach \x in {1,...,\points}{%
			\ifthenelse{\equal{\extraStyle}{0}}{
				\draw[\style] (0) -- (\x);
			}{
			\draw[\style, \extraStyle] (0) -- (\x);
		}
	}
}{
\foreach \x in {1,...,\points}{%
	\edef \showPoint {1};
	
	\foreach \y in \excludeRoots {
		\ifthenelse{\equal{\x}{\y}}{
			\xdef \showPoint {0};
		}{}
	}
	
	\ifthenelse{\equal{\showPoint}{1}}{
		\ifthenelse{\equal{\extraStyle}{0}}{
			\draw[\style] (0) -- (\x);
		}{
		\draw[\style, \extraStyle] (0) -- (\x);
	}
}{}
}
}  
}

\centerline{
    \begin{tikzpicture}
	    [
            vertex/.style={inner sep=1pt,circle,thick,fill=black,color=black},
            lline/.style={->, thick,shorten >=0.1cm,shorten <=0.1cm},
	    ]
	    %
	    \rs[3.5][dashed]{3}{}[60]
	    \node[above right] at (1) {$H_{3}$};
	    \node[above left] at (2) {$H_{1}$};
	    \node[left] at (3) {$H_{2}$};
	    %
	    %
	    \node[] at (270:2.5) {$B$};
	    \node[] at (330:2.5) {$R_1$};
	    \node[] at (30:2.5) {$R_2$};
	    \node[] at (90:2.5) {$R_3$};
	    \node[] at (150:2.5) {$R_4$};
	    \node[] at (210:2.5) {$R_5$};
	    \node[vertex] at (270:2) {};
	    \node[vertex] at (330:2) {};
	    \node[vertex] at (30:2) {};
	    \node[vertex] at (90:2) {};
	    \node[vertex] at (150:2) {};
	    \node[vertex] at (210:2) {};
        \draw[lline] (270:2) -- (330:2);
        \draw[lline] (270:2) -- (210:2);
        \draw[lline] (330:2) -- (30:2);
        \draw[lline] (210:2) -- (150:2);
        \draw[lline] (150:2) -- (90:2);
        \draw[lline] (30:2) -- (90:2);
    \end{tikzpicture}
}
   \caption{The lattice of regions associated to the type~$A_2$ Coxeter arrangement. See \autoref{ex:Coxeter2}.}
	\label{fig:A2_poset}
\end{figure}
\end{example}


\subsection{Facial intervals}
\label{subsec:faces}

One of the interesting facts about the poset of regions is that it allows each face in $\faces$ to be described by a unique interval in $\pr$.
These intervals will be used to define the facial weak order.

\begin{proposition}
	\label{prop:facial_intervals}
	For any face~$F \in \faces$, the set~$\set{R \in \regions}{F \subseteq R}$ is an interval of the poset of regions~$\pr$.
	We denote it~$[m_F,M_F]$ and call it the \definition{facial interval} of~$F$.
	Moreover, $F = \bigcap_{R \in [m_F, M_F]} R$.
\end{proposition}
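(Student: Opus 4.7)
The plan is to describe which regions contain $F$ via sign vectors, use a geometric line-segment argument from the base region (and its opposite) to identify $m_F$ and $M_F$, and then verify both the interval characterization and the intersection identity. To set up, let $\arrangement_F \coloneqq \set{H \in \arrangement}{F \subseteq H}$, and for each region $R$ write $\sigma_R \colon \arrangement \to \{+,-\}$ for its sign vector, so that $R = \bigcap_{H \in \arrangement} \ov{H^{\sigma_R(H)}}$. For each $H \notin \arrangement_F$ the face $F$ lies strictly on one side of $H$, whose sign I denote $\sigma_F(H)$. One checks directly that $F \subseteq R$ is equivalent to $\sigma_R(H) = \sigma_F(H)$ for every $H \notin \arrangement_F$, so that, setting
\[
S_0 \coloneqq \set{H \in \arrangement \setminus \arrangement_F}{\sigma_F(H) \neq \sigma_B(H)},
\]
every such $R$ satisfies $S(R) = S_0 \sqcup (S(R) \cap \arrangement_F)$; in particular $S_0 \subseteq S(R) \subseteq S_0 \cup \arrangement_F$.

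Next, to exhibit $m_F$, I would fix a point $f$ in the relative interior of $F$ and choose $b$ in the interior of $B$ generically enough that the direction $b-f$ avoids every hyperplane of $\arrangement_F$. For sufficiently small $\epsilon > 0$, the point $f + \epsilon(b-f)$ lies in no hyperplane of $\arrangement$ and hence in the interior of a unique region, which I take as $m_F$. A direct sign computation gives $\sigma_{m_F}(H) = \sigma_F(H)$ for every $H \notin \arrangement_F$ (so $F \subseteq m_F$) and $\sigma_{m_F}(H) = \sigma_B(H)$ for every $H \in \arrangement_F$; hence $S(m_F) = S_0$. The symmetric construction starting from a generic point in the interior of $-B$ produces $M_F \supseteq F$ with $S(M_F) = S_0 \cup \arrangement_F$.

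Combining these two steps already gives $\set{R \in \regions}{F \subseteq R} \subseteq [m_F, M_F]$. For the reverse inclusion, suppose $m_F \prle R \prle M_F$ and take any $H \notin \arrangement_F$: if $H \in S_0$, then $H \in S(m_F) \subseteq S(R)$ forces $\sigma_R(H) = \sigma_F(H)$; otherwise $H \notin S_0 \cup \arrangement_F = S(M_F) \supseteq S(R)$, and again $\sigma_R(H) = \sigma_F(H)$. Thus $F \subseteq R$. Finally, for the intersection identity, I would intersect hyperplane by hyperplane: for $H \notin \arrangement_F$ every $R \in [m_F, M_F]$ lies in $\ov{H^{\sigma_F(H)}}$, while for $H \in \arrangement_F$ I would exhibit two regions of $[m_F, M_F]$ on opposite sides of $H$, forcing the full intersection to lie in $H$.

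The most delicate point is this last substep: producing, for each $H \in \arrangement_F$, two regions of $[m_F, M_F]$ on opposite sides of $H$. Since $H$ contains $f$ and belongs to $\arrangement$, in any ball around $f$ the arrangement $\arrangement$ looks locally like the central sub-arrangement $\arrangement_F$, whose chambers on either side of $H$ give, after extending back to $\arrangement$, two regions containing $F$ with the required opposite signs on $H$. This is what ensures that $\bigcap_{R \in [m_F, M_F]} R$ collapses exactly onto $F$ rather than onto a larger face containing it.
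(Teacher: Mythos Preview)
Your argument is correct and follows precisely the geometric idea the paper sketches in the remark immediately after the statement (which refers to \cite[Lemma~4.2.12]{BjornerLasVergnasSturmfelsWhiteZiegler}): perturb a point of~$\int(F)$ toward~$B$ to obtain~$m_F$ and toward~$-B$ to obtain~$M_F$, then read off the separation sets.

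One small simplification: the ``delicate point'' you flag at the end is not actually needed. You have already computed that for every $H \in \arrangement_F$ the region~$m_F$ lies on the $\sigma_B(H)$ side of~$H$ while~$M_F$ lies on the $-\sigma_B(H)$ side, so $m_F$ and~$M_F$ themselves are the two regions of $[m_F,M_F]$ on opposite sides of~$H$. No local analysis of the subarrangement~$\arrangement_F$ is required. (Also, the genericity condition on~$b$ is superfluous: since $f \in H$ and $b \notin H$ for $H \in \arrangement_F$, the vector $b-f$ automatically avoids~$H$.)
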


\begin{remark}
	A proof of the above \autoref{prop:facial_intervals} can be found in \cite[Lemma 4.2.12]{BjornerLasVergnasSturmfelsWhiteZiegler}. 
    It is based on the following geometric idea: the region~$m_F$ (resp.~$M_F$) is the region that is found when starting from any point in the relative interior of the face~$F$ and slightly moving in the direction of (resp.~away from) a point in the relative interior of the base region~$B$.
\end{remark}

For instance the interval corresponding to a region is the singleton constituted of that region.
Note that not all intervals of the poset of region~$\pr$ are facial intervals; only those of the form~$\set{R \in \regions}{F \subseteq R}$ for some face~$F \in \faces$.
Since~${F = \bigcap_{R \in [m_F, M_F]} R}$, we obtain the following corollary.

\begin{corollary}
    \label{cor:F_in_G_iff_faces_containment}
    For $F,G \in \faces$, we have $F \subseteq G \iff [m_F, M_F] \supseteq [m_G, M_G]$.
\end{corollary}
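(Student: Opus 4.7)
The plan is to unpack both characterizations of the facial interval provided by \autoref{prop:facial_intervals}, namely
\[
    [m_F, M_F] = \set{R \in \regions}{F \subseteq R} \qquad \text{and} \qquad F = \bigcap_{R \in [m_F, M_F]} R,
\]
and observe that each implication uses precisely one of them.

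For the forward direction ($F \subseteq G \Rightarrow [m_F, M_F] \supseteq [m_G, M_G]$), I would argue directly from the first characterization: any region $R \in [m_G, M_G]$ satisfies $G \subseteq R$ by definition, so the hypothesis $F \subseteq G$ gives $F \subseteq R$, which places $R$ in $[m_F, M_F]$. This is a one-line set-theoretic step that uses nothing beyond transitivity of inclusion.

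For the reverse direction ($[m_F, M_F] \supseteq [m_G, M_G] \Rightarrow F \subseteq G$), I would instead use the second characterization. Since intersection reverses inclusion, if $[m_G, M_G] \subseteq [m_F, M_F]$ then
\[
    F \;=\; \bigcap_{R \in [m_F, M_F]} R \;\subseteq\; \bigcap_{R \in [m_G, M_G]} R \;=\; G,
\]
which yields the desired containment.

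There is really no substantive obstacle here: the corollary is a formal consequence of \autoref{prop:facial_intervals}, and the only thing to be careful about is applying the right half of that proposition to the right implication (the first characterization is contravariant in $F$, the second is covariant, and the two together give the equivalence).
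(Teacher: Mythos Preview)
Your proof is correct and matches the paper's approach: the paper simply states ``Since $F = \bigcap_{R \in [m_F, M_F]} R$, we obtain the following corollary,'' treating the result as an immediate consequence of \autoref{prop:facial_intervals}, and your argument is precisely the unpacking of that sentence.
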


\begin{example}
	As we saw in \autoref{ex:Coxeter1}, there are $13$ faces in the arrangement of \autoref{fig:A2_poset}.
	For instance, the origin $\{0\}$ is represented by $[B, R_3]$ and $F_1$ is represented with $[R_1, R_2]$.
	Each region $R$ is given by the interval~$[R, R]$.
\end{example}


\subsection{Facial weak order}
\label{subsec:facial_weak_order}

We now state the definition of the facial weak order%
\footnote{
	Just like the poset of regions, it is tempting to call this order the poset of faces.
	However, the facial weak order IS NOT the classical face poset (the poset of faces ordered by inclusion).
	We have thus chosen to borrow the name facial weak order from the context of Coxeter groups studied in~\cite{DermenjianHohlwegPilaud} to the present context of hyperplane arrangements.
}
which will be the focus for the rest of the paper.

\begin{defn}
    \label{def:fwo_lower_upper_related}
	The \definition{facial weak order} is the order $\fwle$ on $\faces$ defined by
	\[
		F \fwle G \quad \iff \quad m_F \prle m_G \quad\text{and}\quad M_F \prle M_G
	\]
	where $[m_F, M_F]$ and $[m_G, M_G]$ are the facial intervals in $\pr$ associated to the faces $F$ and $G$ respectively.
    We denote by $\fw$ the poset~$(\faces, \fwle)$.
\end{defn}

\begin{example}
	\label{ex:Coxeter3}
	We give an example of the Hasse diagram of the facial weak order for the type $A_2$ Coxeter arrangement in \autoref{fig:A2FWOFaces}.
	As we saw in \autoref{ex:Coxeter1}, there are $13$ faces in the arrangement of \autoref{fig:A2_poset}, corresponding to the $13$ elements of the facial weak order.
	For example we have~$[B, R_5] \fwle [R_2, R_3]$ since~$B \prle R_2$ and~$R_5 \prle R_3$.
	\begin{figure}[ht]
\DeclareDocumentCommand{\rs}{ O{1.1cm} O{->} m m O{0} O{0}} {
	\def \radius {#1}
	\def \inputPoints{#3}
	\def \excludeRoots{#4}
	\def \style {#2}
	\def \initialRotation {#5}
	\def \extraStyle {#6}

	\pgfmathtruncatemacro{\points}{\inputPoints * 2}
	\pgfmathsetmacro{\degrees}{360 / \points}
	
	\coordinate (0) at (0,0);
	
	\foreach \x in {1,...,\points}{%
		\pgfmathsetmacro{\location}{(\points+(\x-1))*\degrees + \initialRotation}
		
		\coordinate (\x) at (\location:\radius);
	}

	\ifthenelse{\equal{\excludeRoots}{}}{
		\foreach \x in {1,...,\points}{%
			\ifthenelse{\equal{\extraStyle}{0}}{
				\draw[\style] (0) -- (\x);
			}{
				\draw[\style, \extraStyle] (0) -- (\x);
			}
		}
	}{
		\foreach \x in {1,...,\points}{%
			\edef \showPoint {1};

			\foreach \y in \excludeRoots {
				\ifthenelse{\equal{\x}{\y}}{
					\xdef \showPoint {0};
				}{}
			}
			
			\ifthenelse{\equal{\showPoint}{1}}{
				\ifthenelse{\equal{\extraStyle}{0}}{
					\draw[\style] (0) -- (\x);
				}{
					\draw[\style, \extraStyle] (0) -- (\x);
				}
			}{}
		}
	}  
}

\centerline{
	\begin{tikzpicture}
		[
        scale=1.2,
        face/.style={color=red!95!black},
        fvertex/.style={inner sep=1pt,circle,draw=red!95!black,fill=red!95!black,thick},
        vertex/.style={inner sep=1pt,circle,draw=blue!85!white,fill=blue!85!white,thick},
        pvertex/.style={inner sep=1pt,circle,draw=green!85!black,fill=green!85!black,thick},
        chamber/.style={color=blue!85!white},
        poly/.style={color=green!85!black},
        fdomain/.style={fill=green!15!white,color=green!15!white,opacity=0.2},
		]
		\tikzset{->-/.style={decoration={
					markings,
					mark=at position #1 with {\arrow{>}}},postaction={decorate}}}
        \rs[2][white]{3}{}[30];
        \draw[->-=.25] (1) -- (2);
        \draw[->-=.75] (1) -- (2);
        \draw[->-=.25] (3) -- (2);
        \draw[->-=.75] (3) -- (2);
        \draw[->-=.25] (4) -- (3);
        \draw[->-=.75] (4) -- (3);
        \draw[->-=.25] (5) -- (4);
        \draw[->-=.75] (5) -- (4);
        \draw[->-=.25] (5) -- (6);
        \draw[->-=.75] (5) -- (6);
        \draw[->-=.25] (6) -- (1);
        \draw[->-=.75] (6) -- (1);
        %
        \node[fvertex] (f1) at (240:1.74) {};
        \node[fvertex] (f2) at (300:1.74) {};
        \node[fvertex] (f3) at (0:1.74) {};
        \node[fvertex] (f4) at (60:1.74) {};
        \node[fvertex] (f5) at (120:1.74) {};
        \node[fvertex] (f6) at (180:1.74) {};
        \draw[->-=.25] (f1) -- (f4);
        \draw[->-=.75] (f1) -- (f4);
        \draw[->-=.25] (f2) -- (f5);
        \draw[->-=.75] (f2) -- (f5);
        \node[vertex] at (1) {};
        \node[vertex] at (2) {};
        \node[vertex] at (3) {};
        \node[vertex] at (4) {};
        \node[vertex] at (5) {};
        \node[vertex] at (6) {};
        \node[chamber, above right] at (1) {$\mathbf{[R_2, R_2]}$};
        \node[chamber, above] at (2) {$\mathbf{[R_3,R_3]}$};
        \node[chamber, above left] at (3) {$\mathbf{[R_4,R_4]}$};
        \node[chamber, below left] at (4) {$\mathbf{[R_5,R_5]}$};
        \node[chamber, below] at (5) {$\mathbf{[{B},{B}]}$};
        \node[chamber, below right] at (6) {$\mathbf{[R_1,R_1]}$};
        \node[face] at (235:2.2) {$\mathbf{[{B},R_5]}$};
        \node[face] at (305:2.2) {$\mathbf{[{B},R_1]}$};
        \node[face] at (0:2.4) {$\mathbf{[R_1,R_2]}$};
        \node[face] at (55:2.2) {$\mathbf{[R_2,R_3]}$};
        \node[face] at (125:2.2) {$\mathbf{[R_4,R_3]}$};
        \node[face] at (180:2.4) {$\mathbf{[R_5,R_4]}$};
        \node[poly, right] at (0) {$[{B}, R_3]$};
        \node[pvertex] at (0) {};
	\end{tikzpicture}
}
	    \caption{The facial weak order labeled by facial intervals for the type $A_2$ Coxeter arrangement. See \autoref{ex:Coxeter3}.}
		\label{fig:A2FWOFaces}
	\end{figure}
\end{example}

The facial weak order was first defined for the braid arrangement by D.~Krob, M.~Latapy, J.-C.~Novelli, H.-D.~Phan and S.~Schwer in \cite{KrobLatapyNovelliPhanSchwer}.
It was then extended to arbitrary finite Coxeter arrangements by P.~Palacios and M.~Ronco in~\cite{PalaciosRonco}.
This order was studied in detail in \cite{DermenjianHohlwegPilaud}.
\autoref{def:fwo_lower_upper_related} was written there in Coxeter language.
Namely, for a Coxeter system $(W,S)$, the poset of regions is the right weak order~$\rwo$ on elements of~$W$.
The faces of the Coxeter arrangement correspond to the standard parabolic cosets $xW_I$ where $I \subseteq S$, ${W_I = \gen{I}}$, and~${x \in W^I = \set{w \in W}{\ell(w) \leq \ell(ws) \; \forall s \in I}}$.
In this case, the facial intervals are given by $[x,xw_{\circ, I}]$ where $w_{\circ, I}$ is the longest element in the parabolic subgroup~$W_I$.
The order $\fwle$ was given by $xW_I \fwle yW_J$ if and only if $x \rwo y$ and~${xw_{\circ, I} \rwo yw_{\circ, J}}$.

\begin{remark}
	The facial weak order~$\fw$ is clearly a poset (reflexive, antisymmetric and transitive) as the poset of regions is.
    In fact, the facial weak order~$\fw$ is the subposet induced by facial intervals in the poset of all intervals of the poset of regions~$\pr$ where $[a,b] < [c,d]$ if and only if $a \prle c$ and $b \prle d$.
\end{remark}

\begin{remark}
	\label{rem:subposet}
	Note that the poset of regions~$\pr$ is clearly the subposet of the facial weak order~$\fw$ induced by the singletons~$[R,R]$ for~$R \in \regions$.
	We will see in \autoref{prop:sublattice} that this observation also holds at the level of lattices when $\arrangement$ is simplicial.
\end{remark}


\subsection{Cover relations for the facial weak order}
\label{subsec:cover_relations}

For two faces $F$ and $G$ such that $F \fwle G$, recall that $m_F \prle m_G$ and $M_F \prle M_G$ by \autoref{def:fwo_lower_upper_related}.

\begin{proposition}
    \label{prop:two_implies_third}
    For any two faces~$F, G \in \faces$ then any two of the following conditions implies the third one:
    \begin{enumerate}
        \item \label{item:two_1} $F \subseteq G$ (resp. $F \supseteq G$),
        \item \label{item:two_2} $M_F = M_G$ (resp. $m_F = m_G$),
        \item \label{item:two_3} $F \fwle G$.
    \end{enumerate}
    
\end{proposition}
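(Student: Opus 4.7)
\medskip

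The plan is to derive everything directly from \autoref{cor:F_in_G_iff_faces_containment} combined with \autoref{def:fwo_lower_upper_related}, using antisymmetry of $\prle$. Recall that $F \subseteq G$ is equivalent to $m_F \prle m_G$ and $M_G \prle M_F$ (by \autoref{cor:F_in_G_iff_faces_containment}, since $[m_F, M_F] \supseteq [m_G, M_G]$ unpacks to precisely these two inequalities), whereas $F \fwle G$ is equivalent to $m_F \prle m_G$ and $M_F \prle M_G$. Thus all three conditions in the statement are conjunctions of two inequalities between the endpoints $m_F, m_G, M_F, M_G$, and the proof reduces to simple bookkeeping.

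First I would handle the unparenthesized version. For \ref{item:two_1} and \ref{item:two_2} imply \ref{item:two_3}: from $F \subseteq G$ we get $m_F \prle m_G$, and $M_F = M_G$ trivially gives $M_F \prle M_G$, so $F \fwle G$. For \ref{item:two_1} and \ref{item:two_3} imply \ref{item:two_2}: $F \subseteq G$ gives $M_G \prle M_F$ while $F \fwle G$ gives $M_F \prle M_G$, so by antisymmetry $M_F = M_G$. For \ref{item:two_2} and \ref{item:two_3} imply \ref{item:two_1}: $F \fwle G$ supplies $m_F \prle m_G$, and $M_F = M_G$ supplies $M_G \prle M_F$, so $F \subseteq G$ by \autoref{cor:F_in_G_iff_faces_containment}.

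Then I would treat the parenthesized version symmetrically by exchanging the roles of $m$ and $M$. Now $F \supseteq G$ is equivalent to $m_G \prle m_F$ and $M_F \prle M_G$. Under the hypothesis $m_F = m_G$, the first of these becomes vacuous and the remaining content $M_F \prle M_G$ is exactly what $F \fwle G$ adds to $m_F \prle m_G$; matching inequalities as above and invoking antisymmetry when needed yields the three implications in an identical manner.

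There is no genuine obstacle here: the statement is a purely order-theoretic consequence of the fact that both ``$F \subseteq G$'' and ``$F \fwle G$'' are conjunctions of two comparisons between $\{m_F, m_G\}$ and $\{M_F, M_G\}$ that share one of the two comparisons (the one on the $m$'s) and have opposite comparisons (on the $M$'s). Once this observation is recorded, each of the six sub-implications is a one-line application of antisymmetry or reflexivity of $\prle$.
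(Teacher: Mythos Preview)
Your proposal is correct and follows essentially the same approach as the paper: both arguments rest on \autoref{cor:F_in_G_iff_faces_containment} to translate $F \subseteq G$ into the interval containment $[m_F,M_F]\supseteq[m_G,M_G]$, combine this with \autoref{def:fwo_lower_upper_related}, and finish with antisymmetry of $\prle$. Your presentation is slightly more streamlined in that you unpack the interval containment into the pair of inequalities $m_F\prle m_G$ and $M_G\prle M_F$ once at the outset, whereas the paper invokes \autoref{cor:F_in_G_iff_faces_containment} separately within each sub-implication; the content is the same.
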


\begin{proof}
    Suppose first that $F \fwle G$ and $F \subseteq G$.
    By \autoref{cor:F_in_G_iff_faces_containment}, this implies the inclusion~${[m_F, M_F] \supseteq [m_G, M_G]}$ and therefore $M_G \prle M_F$.
    Furthermore, by \autoref{def:fwo_lower_upper_related} since ${F \fwle G}$ then $M_F \prle M_G$ forcing $M_F = M_G$ as desired.
    The proof that $F \fwle G$ and $F \supseteq G$ implies $m_F = m_G$ is similar.
	
    Suppose next that $F \fwle G$ and $M_F = M_G$.
    As $F \fwle G$ we have $m_F \prle m_G$ and therefore $[m_F, M_F] \supseteq [m_G, M_F] = [m_G, M_G]$.
    In other words $F \subseteq G$ by \autoref{cor:F_in_G_iff_faces_containment}.
    The proof that $F \fwle G$ and $m_F = m_G$ implies $F \supseteq G$ is similar.

    Finally, suppose $F\subseteq G$ and $M_F = M_G$.
    For $F \fwle G$ to hold, it suffices to show $m_F \prle m_G$.
    Since $F \subseteq G$, then by \autoref{cor:F_in_G_iff_faces_containment} $[m_F, M_F] \supseteq [m_G, M_G]$ and therefore $m_F \prle m_G$ as desired.
    The proof that $F \supseteq G$ and $m_F = m_G$ implies $F \fwle G$ is similar.
\end{proof}

The next proposition shows two types of cover relations for the facial weak order.
These will be shown to be precisely all cover relations in $\fw$  in  \autoref{thm:equivalence1}.

\begin{proposition}
    \label{prop:covers_for_fwo}
    For any two faces $F, G \in \faces$ such that $\abs{\dim F - \dim G} = 1$ and either
    \begin{itemize}
        \item $F \subseteq G$ and $M_F = M_G$, or
        \item $F \supseteq G$ and $m_F = m_G$
    \end{itemize}
    hold, then $F$ is covered by $G$, which we denote $F \fwcover G$.
\end{proposition}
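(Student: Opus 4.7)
The plan is to first establish that $F \fwle G$ with $F \neq G$ using \autoref{prop:two_implies_third}, and then rule out any strictly intermediate face by a dimension argument. In the first case $F \subseteq G$ and $M_F = M_G$, conditions (i) and (ii) of \autoref{prop:two_implies_third} both hold, so (iii) gives $F \fwle G$; the hypothesis $\abs{\dim F - \dim G} = 1$ forces $F \neq G$. The second case $F \supseteq G$ and $m_F = m_G$ is handled identically via the parenthetical clauses of \autoref{prop:two_implies_third}.

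Suppose, for contradiction, that some face $H \in \faces$ satisfies $F \fwl H \fwl G$ in the first setting. The definition of $\fwle$ yields $M_F \prle M_H \prle M_G$, and combined with $M_F = M_G$ this forces $M_F = M_H = M_G$. Applying \autoref{prop:two_implies_third} to the pair $(F, H)$ using conditions (ii) and (iii) then yields (i), namely $F \subseteq H$; the same argument on $(H, G)$ gives $H \subseteq G$. Thus $F \subseteq H \subseteq G$.

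The concluding step uses the standard fact that, for a hyperplane arrangement, the dimension of a face strictly increases under proper inclusion: if $F' \subsetneq H'$ are distinct faces, then $\dim F' < \dim H'$. This is immediate from the sign-vector description of faces (flipping any zero coordinate of a sign vector strictly decreases the codimension of the corresponding cell) and is equivalent to the face poset of $\arrangement$ being ranked by dimension. Combined with $\dim G - \dim F = 1$, the chain $F \subseteq H \subseteq G$ leaves only $H = F$ or $H = G$, both contradicting $F \fwl H \fwl G$. The second case is completely symmetric, swapping $\subseteq$ with $\supseteq$ and each $M_\bullet$ with $m_\bullet$ throughout. The only non-routine ingredient is the dimension monotonicity, which is a well-known property of the face structure of any essential hyperplane arrangement; once it is cited, the argument is a direct chase through \autoref{prop:two_implies_third}.
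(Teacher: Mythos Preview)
Your proof is correct and follows essentially the same approach as the paper's: use \autoref{prop:two_implies_third} to get $F \fwle G$, take a hypothetical strictly intermediate face, deduce equality of the $M$-values from the definition of $\fwle$, apply \autoref{prop:two_implies_third} again to obtain an inclusion chain, and finish with the dimension constraint. The only difference is that you spell out the dimension-monotonicity step more explicitly than the paper does (and you use $H$ for the intermediate face, which clashes with the paper's convention of reserving $H$ for hyperplanes).
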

\begin{proof}
    Assume that $F\subseteq G$ and $M_F = M_G$, the argument for the other case being symmetric.
    By \autoref{prop:two_implies_third}, $F \fwle G$.
    Let $X \in \faces \setm \{F, G\}$ be a face such that $F \fwl X \fwl G$.
    By definition of the facial weak order we have~${M_X = M_F= M_G}$.
    Then, by \autoref{prop:two_implies_third} again, $F \subseteq X \subseteq G$.
    Furthermore, since~${\abs{\dim F - \dim G} = 1}$, we necessarily have  $X = F$ or $X = G$.
    Hence $F$ is covered by $G$.
\end{proof}


\section{Geometric interpretations for the facial weak order}
\label{sec:Geometric_interpretations}

We describe in this section two different geometric interpretations for the facial weak order: first by the covectors of the corresponding oriented matroid, then by what we call root inversion sets which relates to the geometry of the corresponding zonotope.
We prove along the way that these various interpretations are equivalent.

Throughout this section, $\arrangement$ is a hyperplane arrangement.
We fix a normal vector~$e_H$ to each hyperplane~${H \in \arrangement}$, so that~$H = \set{v \in V}{\gen{e_H, v} = 0}$.
We consider the half spaces $H^+ = \set{v \in V}{\gen{e_H, v} \geq 0}$ and $H^- = \set{v \in V}{\gen{e_H, v} \leq 0}$ where the boundary in both cases is $H$.
For convenience, we choose the direction of the vector~$e_H$ such that the base region~$B$ lies in $H^+$.


\subsection{Covectors and oriented matroids}
\label{subsec:covectors}

In this section, we introduce basic oriented matroid terminology to deal geometrically with our hyperplane arrangements.
As we only consider hyperplane arrangements, we focus on realizable oriented matroids.
Moreover, we only consider covectors, and do not discuss other perspectives on oriented matroids.
A more general setting and background on oriented matroids can be found in the book by A.~Bj\"orner, M.~Vergas, B.~Sturmfels, N.~White and G.~M.~Ziegler \cite{BjornerLasVergnasSturmfelsWhiteZiegler}.

The \definition{sign map} of the hyperplane arrangement~$\arrangement$ is the map
\[
	\sigma: V  \to \{-, 0, +\}^{\arrangement}
\]
defined for $v \in V$ by $\sigma(v) = \big( \sigma_H(v) \big)_{H \in \arrangement}$ where 
\[
    \sigma_H(v) = \mathrm{sign}(\gen{v, e_H}) = \begin{cases}
        + & \text{if } \gen{v, e_H} > 0,\\
        - & \text{if } \gen{v, e_H} < 0,\\
        0 & \text{if } \gen{v, e_H} = 0.\\
    \end{cases}
\]
This map may be extended to assign to each face of $\arrangement$ a vector in $\{-, 0, +\}^{\arrangement}$ as follows.
Denote by $\int(F)$ the set of points in the relative interior of the face $F$.
The \definition{face sign map} of the hyperplane arrangement~$\arrangement$ is the map
\[
	\hat{\sigma}: \faces \to \{-, 0, +\}^{\arrangement}
\]
defined by~$\hat{\sigma}(F) = \sigma(v)$ for~$v \in \int(F)$.
This map is well-defined since, for arbitrary~$v$ and $w$ in $\int(F)$, we have that $\sigma(v) = \sigma(w)$.
However, note that for $v$ on the boundary of $F$ we could have that $\sigma_H(v) = 0$ even if $\hat{\sigma}_H(F) \ne 0$ for $H \in \arrangement$.
Thus for $v \in F \setm \int(F)$ either $\sigma_H(v) = \hat{\sigma}_H(F)$ or $\sigma_H(v) = 0$ for each $H \in \arrangement$.
Note that a face $F$ is easily recovered from its covector:
\[
    F = \bigcap_{H \in \arrangement} H^{\hat{\sigma}_H(F)}.
\]
By abuse of notation we let $\hat{\sigma}_H(F)$ be denoted by $F(H)$.
The sign vector~$\hat{\sigma}(F)$ is called \definition{covector} of the face~$F$, and the image $\covectors(\arrangement) \coloneqq \hat{\sigma}(\faces)$ of all faces in $\faces$ by the face sign map~$\hat{\sigma}$ is the \definition{set of covectors} of the hyperplane arrangement $\arrangement$. 

\begin{example}
	\label{ex:Coxeter4}
	We have represented in \autoref{fig:A2Covectors} the covectors of all faces of the type $A_2$ Coxeter arrangement in \autoref{fig:A2}.
	\begin{figure}[ht]
\DeclareDocumentCommand{\rs}{ O{1.1cm} O{->} m m O{0} O{0}} {
	\def \radius {#1}
	\def \inputPoints{#3}
	\def \excludeRoots{#4}
	\def \style {#2}
	\def \initialRotation {#5}
	\def \extraStyle {#6}

	\pgfmathtruncatemacro{\points}{\inputPoints * 2}
	\pgfmathsetmacro{\degrees}{360 / \points}
	
	\coordinate (0) at (0,0);
	
	\foreach \x in {1,...,\points}{%
		\pgfmathsetmacro{\location}{(\points+(\x-1))*\degrees + \initialRotation}
		
		\coordinate (\x) at (\location:\radius);
	}

	\ifthenelse{\equal{\excludeRoots}{}}{
		\foreach \x in {1,...,\points}{%
			\ifthenelse{\equal{\extraStyle}{0}}{
				\draw[\style] (0) -- (\x);
			}{
			\draw[\style, \extraStyle] (0) -- (\x);
		}
	}
}{
\foreach \x in {1,...,\points}{%
	\edef \showPoint {1};
	
	\foreach \y in \excludeRoots {
		\ifthenelse{\equal{\x}{\y}}{
			\xdef \showPoint {0};
		}{}
	}
	
	\ifthenelse{\equal{\showPoint}{1}}{
		\ifthenelse{\equal{\extraStyle}{0}}{
			\draw[\style] (0) -- (\x);
		}{
		\draw[\style, \extraStyle] (0) -- (\x);
	}
}{}
}
}  
}

\centerline{
    \begin{tikzpicture}
	    [
	    fdomain/.style={fill=blue!15!white,color=blue!15!white,opacity=0.2},
	    vdomain/.style={color=blue!85!white},
	    face/.style={draw=red!95!black,fill=red!95!black,color=red!95!black,ultra thick, -},
	    norm/.style={black,->},
	    vertex/.style={inner sep=1pt,circle,draw=green!85!white,fill=green!85!white,thick},
	    vvertex/.style={color=green!85!black},
	    ]
	    %
	    %
	    \rs[3.5][ultra thick]{3}{}[60]
	    \node[above right] at (1) {$H_{3}$};
	    \node[above left] at (2) {$H_{1}$};
	    \node[left] at (3) {$H_{2}$};
	    \rs[3][dashed]{3}{}[240]
	    \draw[face] (0) -- (1) node[left] {$(+,+,0)$};
	    \draw[face] (0) -- (2) node[right] {$(0,+,+)$};
	    \draw[face] (0) -- (3) node[above] {$(-,0,+)$};
	    \draw[face] (0) -- (4) node[right] {$(-,-,0)$};
	    \draw[face] (0) -- (5) node[left] {$(0,-,-)$};
	    \draw[face] (0) -- (6) node[above] {$(+,0,-)$};
	    %
	    %
	    \fill[fdomain] (0) -- (1) -- (2) -- cycle {};
	    \node[vdomain] at (270:2.2) {$(+,+,+)$};
	    \fill[fdomain] (0) -- (3) -- (2) -- cycle {};
	    \node[vdomain] at (330:2.2) {$(-,+,+)$};
	    \fill[fdomain] (0) -- (3) -- (4) -- cycle {};
	    \node[vdomain] at (30:2.2) {$(-,-,+)$};
	    \fill[fdomain] (0) -- (5) -- (4) -- cycle {};
	    \node[vdomain] at (90:2.2) {$(-,-,-)$};
	    \fill[fdomain] (0) -- (5) -- (6) -- cycle {};
	    \node[vdomain] at (150:2.2) {$(+,-,-)$};
	    \fill[fdomain] (0) -- (1) -- (6) -- cycle {};
	    \node[vdomain] at (210:2.2) {$(+,+,-)$};
	    \node[vertex] at (0) {};
	    \node[vvertex] at (0.8,0.2) {$(0,0,0)$};
    \end{tikzpicture}
}
		\caption{The type~$A_2$ Coxeter arrangement where the faces are identified by their associated covectors. See \autoref{ex:Coxeter4}.}
		\label{fig:A2Covectors}
	\end{figure}
\end{example}



We next define some useful operations on sign vectors which we use throughout this paper.
For two sign vectors $F, G \in \{ -, 0, +\}^\arrangement$, define
\begin{itemize}
\item the \definition{opposite} of $F$:
\(
    -F(H) = \begin{cases}
        + & \text{if } F(H) = -,\\
        - & \text{if } F(H) = +,\\
        0 & \text{if } F(H) = 0.
	\end{cases}
\)
\item the \definition{composition} of $F$ and $G$:
\(
    (F \circ G)(H) = \begin{cases}
        F(H) & \text{if } F(H) \ne 0,\\
        G(H) & \text{otherwise}.
	\end{cases}
\)
\item the \definition{reorientation} of a $F$ by $G$:
\(
    (F_{-G})(H) = \begin{cases}
        -F(H) & \text{if } G(H) = 0,\\
        F(H) & \text{otherwise}.
    \end{cases}
\)
\item the \definition{separation set}:
\(
    S(F, G) = \set{H \in \arrangement }{F(H) = -G(H) \ne 0}.
\)
\end{itemize}

\begin{example}
	\label{ex:Coxeter5}
	For instance, on the arrangement of \autoref{fig:A2Covectors}, for~$F = (-,0,+)$ and~$G = (0,-,-)$, we have
	\[
		-F = (+,0,-),
		\quad
		F \circ G = (-,-,+),
		\quad
		F_{-G} = (+,0,+)
		\quad\text{and}\quad
		S(F, G) = \{H_3\}.
	\]
\end{example}

\medskip
Note that if~$F$ and~$G$ are covectors in~$\covectors(\arrangement)$, then the opposite $-F$ of~$F$ and the composition $F \circ G$ of~$F$ and~$G$ are both covectors in $\covectors(\arrangement)$, or in other words faces in~$\faces$.
If furthermore, $G \subseteq F$ then the reorientation~$F_{-G}$ of $F$ by $G$ is a covector in~$\covectors(\arrangement)$ as well.
Moreover, $G \subseteq F_{-G}$ as faces.
Note that the separation set of regions from \autoref{subsec:poset_of_regions} is the same separation set as given for covectors of regions.
It is well-known that the set of covectors $\covectors(\arrangement)$ of the arrangement~$\arrangement$ is an oriented matroid in the sense of the following definition.
Cryptomorphic definitions for oriented matroids can be found in the book by A.~Bj\"orner, M.~Vergnas, B.~Sturmfels, N.~White and G.~M.~Ziegler \cite{BjornerLasVergnasSturmfelsWhiteZiegler}.

\begin{defn}
    \label{defn:oriented_matroid}
    An \definition{oriented matroid} is a pair $\big( \arrangement, \covectors \big)$ where~$\covectors$ is a collection of sign vectors in~$\{-, 0, +\}^\arrangement$ satisfying the following four properties:
    \begin{enumerate}
        \item \label{enum:covec_def_0} $\bf{0} \in \covectors$.
        \item \label{enum:covec_def_1} If $F \in \covectors$ then $(-F) \in \covectors$.
        \item \label{enum:covec_def_2} If $F, G \in \covectors$ then $(F \circ G) \in \covectors$.
        \item \label{enum:covec_def_3} \definition{Elimination axiom: } If $F, G \in \covectors$ and $H \in S(F,G)$ then there exists $X \in \covectors$ such that $X(H) = 0$ and $X(H') = (F \circ G)(H') = (G \circ F)(H')$ for all $H' \notin S(F, G)$.
    \end{enumerate}
\end{defn}

The notion of oriented matroids allows us to have a nice algebraic interpretation of what it means for a face $F$ to be a face of $G$.
We can either do a comparison between the two faces relative to the hyperplanes or we check how their covectors interact through composition.

\begin{proposition}
    \label{prop:face_inclusion_on_covectors}
	The following assertions are equivalent for two faces $F, G \in \faces$:
    \begin{enumerate}
        \item \label{item:inc1} $F \subseteq G$ as faces,
        \item \label{item:inc3} for all $H \in \arrangement$ either $F(H) = 0$ or $F(H) = G(H)$, and
        \item \label{item:inc2} $G = F \circ G$ as covectors.
    \end{enumerate}
\end{proposition}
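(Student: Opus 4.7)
The plan is to establish the chain (iii) $\Leftrightarrow$ (ii) $\Leftrightarrow$ (i), noting that (ii) $\Leftrightarrow$ (iii) is essentially a rewriting of the definition of composition while (i) $\Leftrightarrow$ (ii) carries the geometric content.

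For (ii) $\Leftrightarrow$ (iii), I would simply unpack the definition of the composition: by construction $(F \circ G)(H) = F(H)$ whenever $F(H) \neq 0$ and $(F \circ G)(H) = G(H)$ otherwise. Therefore the coordinate-wise identity $G = F \circ G$ holds if and only if for every $H \in \arrangement$ with $F(H) \neq 0$ one has $F(H) = G(H)$, which is exactly condition (ii).

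For (i) $\Rightarrow$ (ii), I would pick a point $v \in \int(F)$, so that $\sigma_H(v) = F(H)$ for every $H \in \arrangement$ by definition of the face sign map. Since $F \subseteq G = \bigcap_{H \in \arrangement} H^{G(H)}$, the point $v$ lies in every closed half-space $H^{G(H)}$. A short case split on the value of $G(H) \in \{-, 0, +\}$ then shows that $\sigma_H(v)$ is either $0$ or equals $G(H)$, which gives (ii). Conversely, for (ii) $\Rightarrow$ (i), I would take an arbitrary $v \in F$ and check that $v \in H^{G(H)}$ for every $H \in \arrangement$. From $v \in F = \bigcap_H H^{F(H)}$, either $F(H) = 0$, in which case $v \in H$ is already contained in the closed half-space $H^{G(H)}$, or $F(H) = G(H) \neq 0$, in which case $v \in H^{F(H)} = H^{G(H)}$; either way $v \in G$.

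The main (mild) subtlety to watch for is the distinction between the covector value $F(H) = \hat{\sigma}_H(F)$ and the pointwise sign $\sigma_H(v)$ for a point $v \in F \setm \int(F)$: as noted in the text, on the relative boundary of $F$ the pointwise sign can vanish even when the face covector does not. This is why the forward direction (i) $\Rightarrow$ (ii) forces the choice $v \in \int(F)$, while the backward direction (ii) $\Rightarrow$ (i) is free to use any $v \in F$ because it only relies on the containment $F \subseteq H^{F(H)}$, which is valid pointwise on all of $F$.
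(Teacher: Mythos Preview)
Your proof is correct and follows essentially the same approach as the paper: the equivalence (ii)\,$\Leftrightarrow$\,(iii) is a direct unpacking of the composition, and (i)\,$\Rightarrow$\,(ii) is the same geometric observation (the paper simply declares it ``readily seen''). The one minor difference is in (ii)\,$\Rightarrow$\,(i): you argue directly via the half-space description $G = \bigcap_H H^{G(H)}$, whereas the paper argues by contrapositive, splitting into the cases $F \supsetneq G$ (where some $H$ has $G(H)=0\neq F(H)$) and $F$ incomparable to $G$ (where a separating hyperplane gives $0\neq G(H') = -F(H')$). Your direct argument is arguably cleaner and avoids invoking the existence of a separating hyperplane.
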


\begin{proof}
    The equivalence of \autoref{item:inc2} and \autoref{item:inc3} is readily seen by definition of the composition~$F \circ G$.
    Furthermore, if $F \subseteq G$ as faces then it is readily seen that for all $H \in \arrangement$ either $F(H) = 0$ or $F(H) = G(H)$.

    It remains to show that \autoref{item:inc3} implies \autoref{item:inc1}.
    Suppose contrarily that $F \not\subseteq G$.
    If~${F \supsetneq G}$ then there exists some $H \in \arrangement$ such that $G(H) = 0 \ne F(H)$ since $F \ne G$.
    Else if~${F \not\supseteq G}$ and $F \not\subseteq G$ there is an $H' \in \arrangement$ which separates $G$ and $F$.
    In other words~${0 \ne G(H') = -F(H')}$.
\end{proof}

In fact, the sign of a face relative to a hyperplane tells us a lot about the regions containing the face.
Recall that for a region $R$, the separation set between the base region $B$ and $R$ is denoted by $S(R)$.

\begin{lemma}
    \label{lem:covector_and_sepSet}
    For a face~$F \in \faces$ with facial interval $[m_F, M_F]$ and a hyperplane~$H \in \arrangement$,
    \begin{enumerate}
        \item $F(H) = -$ if and only if $H \in S(m_F)$,
        \item $F(H) = 0$ if and only if $H \in S(M_F)$ and $H \notin S(m_F)$,
        \item $F(H) = +$ if and only if $H \notin S(M_F)$.
    \end{enumerate}
    In other words, 
    \[
    	S(m_F) = \set{H \in \arrangement}{F(H) < 0}
		\quad\text{and}\quad
		S(M_F) = \set{H \in \arrangement}{F(H) \leq 0}.
    \]
\end{lemma}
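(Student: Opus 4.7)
The plan is to translate everything into sign vectors, then handle the three cases $F(H) = +$, $F(H) = 0$, $F(H) = -$ separately, using \autoref{prop:face_inclusion_on_covectors} for the easy cases and a short geometric argument involving the facial interval for the middle case.

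First, I would observe a reformulation that makes the target cleaner. Since the normals $e_H$ were chosen so that the base region $B$ lies in $H^+$, we have $B(H) = +$ for every $H \in \arrangement$. Consequently, for any region $R \in \regions$, the hyperplane $H$ separates $B$ from $R$ if and only if $R(H) = -$; that is,
\[
    S(R) = \set{H \in \arrangement}{R(H) = -}.
\]
Thus the three assertions are equivalent to showing $m_F(H) = F(H)$ whenever $F(H) \ne 0$, together with $m_F(H) = +$ and $M_F(H) = -$ when $F(H) = 0$.

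Next, since $F \subseteq m_F$ and $F \subseteq M_F$ by \autoref{prop:facial_intervals}, I would apply \autoref{prop:face_inclusion_on_covectors}\ref{item:inc3} to obtain that for every $H \in \arrangement$ and every region $R$ in the facial interval $[m_F, M_F]$, either $F(H) = 0$ or $F(H) = R(H)$. This immediately handles the cases $F(H) = +$ and $F(H) = -$: in both situations $m_F(H) = M_F(H) = F(H)$, which gives \ref{item:inc1} and \ref{item:inc2} (with "only if" direction of the composite picture) of the claim, since $H \in S(R)$ is controlled by $R(H) = -$.

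The only remaining case is $F(H) = 0$, and this is the step I expect to be the main obstacle, as it requires a genuine use of the poset of regions rather than just covector arithmetic. Here $F \subseteq H$, and there are exactly two regions incident to $F$ across $H$, say $R^+$ and $R^-$, characterized by $R^+(H) = +$ and $R^-(H) = -$. Both lie in the facial interval $[m_F, M_F]$, so by definition of this interval
\[
    S(m_F) \subseteq S(R^+) \quad\text{and}\quad S(R^-) \subseteq S(M_F).
\]
Since $R^+(H) = +$ we have $H \notin S(R^+)$, hence $H \notin S(m_F)$, giving $m_F(H) = +$; and since $R^-(H) = -$ we have $H \in S(R^-)$, hence $H \in S(M_F)$, giving $M_F(H) = -$. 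Putting the three cases together yields the desired converses as well, and the two displayed equalities for $S(m_F)$ and $S(M_F)$ follow immediately.
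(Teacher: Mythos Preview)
Your argument is correct in outline and reaches the result, but the route differs from the paper's and one step is phrased loosely.

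The paper proves case~(i) directly and declares the others similar: it uses that $H \in S(m_F)$ if and only if $H \in S(R)$ for \emph{every} $R \in [m_F,M_F]$ (since $m_F$ is the minimum of the interval), which is precisely the statement that $H$ separates $B$ from $F$, i.e.\ that $\langle v,e_H\rangle<0$ for $v\in\int(F)$, i.e.\ $F(H)=-$. This is a one-line geometric identification and needs no auxiliary regions $R^\pm$.

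Your approach instead splits on the value of $F(H)$ and uses \autoref{prop:face_inclusion_on_covectors} to handle the nonzero cases, which is clean. The only place to tighten is the case $F(H)=0$: the sentence ``there are exactly two regions incident to $F$ across $H$'' is not literally true when $\codim(F)>1$, and in any case the existence of $R^+$ and $R^-$ in $[m_F,M_F]$ with the prescribed signs on $H$ deserves a justification. The quickest fix within the paper's framework is to take $R^+ \coloneqq F\circ B$ and $R^- \coloneqq F\circ(-B)$: both are covectors by the composition axiom, both are regions since $B$ has no zero entries, both contain $F$ by \autoref{prop:face_inclusion_on_covectors}, and their $H$-entries are $B(H)=+$ and $-B(H)=-$ respectively because $F(H)=0$. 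With that patch your proof is complete; its advantage over the paper's is that it treats all three cases uniformly via covectors rather than appealing to the geometric description of $m_F$, at the cost of the extra existence step.
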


\begin{proof}
    We show the first case, the other two being similar.
    First, recall that by the definition of interval, $H \in S(m_F)$ if and only if for all $R \in [m_F, M_F]$ then~${H \in S(R)}$.
    In other words, if and only if $H$ separates the base region $B$ from $F$.
    This is true if and only if for some $v \in \int(F)$ then $\gen{v, e_H} < 0$ since $B \subseteq H^+$ by our chosen orientation given at the beginning of this section.
    In other words, if and only if~${F(H) = -}$.
\end{proof}

This lemma allows us to be a little more precise as to which faces are faces of $B$ and gives us a stronger method of finding these faces.
Not only are the faces of $B$ the faces with all non-negative components in their covector, but, in fact, we can strengthen this by only needing to look at the hyperplanes which bound $B$.

\begin{corollary}
    \label{cor:containment_B}
    The following assertions are equivalent for a face $F \in \faces$:
    \begin{enumerate}
        \item \label{enum:base_subset_1} $F \subseteq B$,
        \item \label{enum:base_subset_2} $F(H) \geq 0$ for all $H$ bounding $B$, and
        \item \label{enum:base_subset_3} $F(H) \geq 0$ for all $H \in \arrangement$.
    \end{enumerate}
\end{corollary}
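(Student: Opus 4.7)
The plan is to prove the cycle $(i) \Rightarrow (iii) \Rightarrow (ii) \Rightarrow (i)$. A preliminary observation that simplifies both non-trivial directions is that, by the orientation convention fixed at the start of \autoref{sec:Geometric_interpretations}, the base region $B$ lies in $H^+$ for every $H \in \arrangement$, so its covector satisfies $B(H) = +$ for all $H \in \arrangement$.

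With this in hand, $(i) \Rightarrow (iii)$ is immediate from \autoref{prop:face_inclusion_on_covectors} applied to the pair $F \subseteq B$: for every $H \in \arrangement$ either $F(H) = 0$ or $F(H) = B(H) = +$, and in either case $F(H) \geq 0$. The implication $(iii) \Rightarrow (ii)$ is trivial, since the walls of $B$ form a subset of $\arrangement$.

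The main content is the implication $(ii) \Rightarrow (i)$. The key ingredient is the standard polyhedral fact that $B$ equals the intersection $\bigcap_{H} H^+$ taken over the walls $H$ of $B$: the half-space constraints defining $B$ as a region that come from non-walls are redundant, because the walls of a full-dimensional region are precisely its facets as a convex polyhedron. Granting this, suppose $F(H) \geq 0$ for every wall $H$ of $B$ and pick any $v \in \int(F)$. For each such wall $H$ we have $\sigma_H(v) = F(H) \geq 0$, so $v \in H^+$; intersecting over all walls of $B$ yields $v \in B$, hence $\int(F) \subseteq B$ and finally $F \subseteq B$ by closedness of $B$. The main obstacle is thus the polyhedral fact that the walls of $B$ suffice to cut out $B$, which is standard and can be extracted from the general theory of convex polyhedra, or proved directly by noting that a point outside $B$ but satisfying every wall inequality could be joined to $\int(B)$ by a segment avoiding every hyperplane in $\arrangement$, contradicting the fact that $B$ is a maximal connected component of $V \setminus \bigcup_{H \in \arrangement} H$'s closure structure.
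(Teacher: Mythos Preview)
Your proof is correct, but the route you take for the non-trivial implication differs from the paper's. You close the cycle via $(ii)\Rightarrow(i)$, invoking the polyhedral fact that a full-dimensional region equals the intersection of the closed half-spaces associated to its walls; the paper instead closes via $(ii)\Rightarrow(iii)$, arguing combinatorially with \autoref{lem:covector_and_sepSet}: if some $H\in\arrangement$ had $F(H)=-$, then $H\in S(m_F)$, so $m_F\ne B$, and hence some wall $H'$ of $B$ must separate $B$ from $m_F$, forcing $F(H')=-$. Your approach is more geometrically direct and self-contained once one grants the facet-description of $B$; the paper's approach stays entirely within the covector and separation-set language already developed and avoids appealing to external polyhedral theory. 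One minor comment: your parenthetical ``direct proof'' of the polyhedral fact (joining a point to $\int(B)$ by a segment avoiding all hyperplanes) is imprecise as stated, since the point in question may lie on a non-wall hyperplane; but since you correctly flag the fact as standard, this does not affect the validity of the argument.
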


\begin{proof}
    The points \autoref{enum:base_subset_1} and \autoref{enum:base_subset_3} are equivalent by \autoref{prop:face_inclusion_on_covectors} and the fact that~$B(H) > 0$ for all~$H \in \arrangement$.
    Additionally, \autoref{enum:base_subset_3} implies \autoref{enum:base_subset_2} is readily seen.

    To show that \autoref{enum:base_subset_2} implies \autoref{enum:base_subset_3}, let $\boundary$ be the set of boundary hyperplanes of $B$.
    Suppose that there exists $H \in \arrangement \setm \boundary$ such that $F(H) = -$.
    Then $H \in S(m_F)$ by \autoref{lem:covector_and_sepSet}, therefore $m_F \ne B$.
    This implies $H' \in S(m_F)$ for some $H' \in \boundary$ as well, since some $H' \in \boundary$ must separate $B$ and $m_F$ by definition of $\boundary$.
    In other words,~${F(H') = -}$ for some $H' \in \boundary$.
\end{proof}

We conclude with an observation which ensures that a face is not contained in a given hyperplane.

\begin{lemma}
    \label{lem:Y_(_X_gives_nonzero_X*(H)}
    Let $F$ and $G$ be two distinct faces in $\faces$.
    If there exists $H \in \arrangement$ such that~${G = F \cap H}$, then $F(H) \ne 0$.
\end{lemma}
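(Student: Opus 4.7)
The plan is to argue by contradiction, showing that $F(H) = 0$ would force $F \subseteq H$, which in turn would make the intersection $F \cap H$ equal to $F$ itself, contradicting the distinctness of $F$ and $G$.

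First I would unpack the hypothesis $F(H) = 0$ in terms of the face sign map. By definition, $F(H) = \hat{\sigma}_H(F) = \sigma_H(v)$ for any $v \in \int(F)$, so $F(H) = 0$ means $\gen{v, e_H} = 0$ for every point $v$ in the relative interior of $F$. The cleanest way to record this is to invoke the identity
\[
    F = \bigcap_{H' \in \arrangement} (H')^{\hat{\sigma}_{H'}(F)}
\]
noted just above in the paper: if $\hat{\sigma}_H(F) = 0$, then the factor indexed by $H$ is $H^0 = H$ itself, so $F \subseteq H$. Alternatively, one can observe that the relative interior of $F$ is dense in $F$ and $H$ is closed.

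Once $F \subseteq H$ is established, the conclusion is immediate: $G = F \cap H = F$, contradicting the assumption that $F$ and $G$ are distinct. Thus $F(H) \ne 0$, as claimed.

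There is essentially no obstacle here; the only care needed is to make sure that the passage from ``$\sigma_H$ vanishes on $\int(F)$'' to ``$F \subseteq H$'' is justified, and this is handled by the covector-description of faces already recorded in the excerpt. The lemma is really just observing that the ``$=0$'' entries of a covector correspond exactly to the hyperplanes containing the face.
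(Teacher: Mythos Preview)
Your proof is correct and follows essentially the same contradiction argument as the paper: assuming $F(H)=0$ forces $F\subseteq H$, hence $G=F\cap H=F$, contradicting distinctness. The paper's version is more terse and leaves the implication $F(H)=0\Rightarrow F\subseteq H$ implicit, whereas you spell it out via the covector description of faces.
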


\begin{proof}
    Suppose contrarily that $F(H) = 0$.
    Then $F = F \cap H = G$ contradicting that~$F$ and~$G$ are distinct.
\end{proof}


\subsection{Covectors and the facial weak order}

It is well-known that the face poset of the arrangement~$\arrangement$ can be interpreted as the poset of covectors of~$\covectors(\arrangement)$ ordered coordinatewise by~$0 < -$ and~$0 < +$.
Adding a maximum element to both posets allows us to interpret the face lattice as a lattice of covectors.
Here, we consider instead a twisted order that relates to the facial weak order.

\begin{defn}
    \label{def:leqL}
    Given two covectors $F, G \in \covectors(\arrangement)$, let the order $\cole$ be defined by
    \[
	    F \cole G \quad \iff \quad G(H) \leq F(H) \text{ for all } H \in \arrangement,
    \]
    where the order on signs is the natural order~$- < 0 < +$.
\end{defn}

We are ready to state our first main theorem, stating the equivalence between three descriptions of the facial weak order using \autoref{def:fwo_lower_upper_related}, \autoref{prop:covers_for_fwo} and \autoref{def:leqL} respectively. 

\begin{thm}
	\label{thm:equivalence1}
	The following assertions are equivalent for two faces $F, G \in \faces$:
	\begin{enumerate}
		\item \label{item:fwo_faces} $F \fwle G$ in the facial weak order $\fw$,
        \item \label{item:fwo_covers} there exists a sequence of faces $F = F_1, F_2, \ldots, F_n = G$ such that for each $i$, $\abs{\dim F_i - \dim F_{i+1}} = 1$ and either $F_i \subseteq F_{i+1}$ and $M_{F_i} = M_{F_{i+1}}$ or $F_{i+1} \subseteq F_i$ and $m_{F_i} = m_{F_{i+1}}$.
        \item \label{item:fwo_covects} $F \cole G$ in terms of covectors.
	\end{enumerate}
\end{thm}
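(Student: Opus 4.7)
The plan is to establish the equivalence via $(\text{i}) \iff (\text{iii})$, then $(\text{ii}) \Rightarrow (\text{i})$, and finally $(\text{iii}) \Rightarrow (\text{ii})$ by induction on a rank function. For $(\text{i}) \iff (\text{iii})$, \autoref{lem:covector_and_sepSet} gives $S(m_F) = \set{H \in \arrangement}{F(H) < 0}$ and $S(M_F) = \set{H \in \arrangement}{F(H) \leq 0}$ (and likewise for $G$), so $F \fwle G$ --- i.e., $S(m_F) \subseteq S(m_G)$ and $S(M_F) \subseteq S(M_G)$ --- is equivalent by a case analysis on $F(H) \in \{-,0,+\}$ to the coordinatewise inequality $G(H) \leq F(H)$ for every $H \in \arrangement$, which is \autoref{def:leqL}. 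For $(\text{ii}) \Rightarrow (\text{i})$, each consecutive pair in the sequence satisfies the hypotheses of \autoref{prop:covers_for_fwo} and is thus a cover in $\fw$, and transitivity gives $F \fwle G$.

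For $(\text{iii}) \Rightarrow (\text{ii})$, I would induct on $\rho(F,G) := |S(m_G) \setminus S(m_F)| + |S(M_G) \setminus S(M_F)|$, which is strictly positive when $F \fwl G$. The goal of the inductive step is to produce a cover $F \fwcover F^*$ of one of the two forms in \autoref{prop:covers_for_fwo} with $F^* \cole G$, then recurse on $(F^*, G)$. Partition the coordinates where $F$ and $G$ differ into $D := \set{H}{F(H) = 0, G(H) = -}$, $U_0 := \set{H}{F(H) = +, G(H) = 0}$, and $U_- := \set{H}{F(H) = +, G(H) = -}$; at least one is nonempty. If $D \neq \emptyset$, the composition $F \circ G$ is a face by \autoref{defn:oriented_matroid} and strictly contains $F$ in the face poset; for any face-poset cover $F \lessdot F^* \subseteq F \circ G$ the flipped coordinates lie in $D$ and take the value $-$, so $M_F = M_{F^*}$ (an Option A cover) and $F^* \cole G$. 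If $D = U_- = \emptyset$, then $G \subseteq F$ as faces, and symmetrically any face-poset cover $F^* \lessdot F$ with $G \subseteq F^*$ flips coordinates in $U_0$ from $+$ to $0$, producing an Option B cover with $F^* \cole G$.

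The main obstacle is the remaining case $D = \emptyset$ and $U_- \neq \emptyset$, where coordinates of $U_-$ must eventually flip $+ \to -$ and no single cover from $F$ can accomplish this directly. My plan is to invoke the self-duality of $\fw$ under the negation $F \mapsto -F$, which I would derive from \autoref{prop:por_selfdual} together with the identities $m_{-F} = -M_F$ and $M_{-F} = -m_F$, and which interchanges the two options in \autoref{prop:covers_for_fwo}. When $U_0 \neq \emptyset$, applying the $D \neq \emptyset$ step to the dual pair $(-G, -F)$ (for which $D' = U_0$) produces, after retranslation, a cover $F^{**} \fwcover G$ of Option B type with $F \cole F^{**}$, reducing the problem to the smaller instance $(F, F^{**})$. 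The most delicate sub-case is $D = U_0 = \emptyset$, where $F$ and $G$ disagree only on $U_-$: here one must exhibit directly a sub-facet $F^*$ of $F$ whose flipped coordinates are contained in $U_-$, giving an Option B cover $F \fwcover F^*$ with $F^* \cole G$. Its existence follows from careful use of \autoref{defn:oriented_matroid} (notably the elimination axiom) to extract the common sub-face obtained by intersecting $F$ with the hyperplanes in $U_-$, and this is where I expect the proof to require the most care.
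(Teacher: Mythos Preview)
Your $(\text{i}) \Leftrightarrow (\text{iii})$ via \autoref{lem:covector_and_sepSet} and your $(\text{ii}) \Rightarrow (\text{i})$ via \autoref{prop:covers_for_fwo} are correct and essentially what the paper does (the paper runs the cycle $(\text{ii}) \Rightarrow (\text{i}) \Rightarrow (\text{iii}) \Rightarrow (\text{ii})$, but the content is the same).

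For $(\text{iii}) \Rightarrow (\text{ii})$ the paper takes a shorter path. First note the dictionary: $D = \varnothing \iff F = F \circ G$, \ $U_0 \cup U_- = \varnothing \iff G = F \circ G$, and $U_- = S(F,G)$. The paper pivots on $F \circ G$: by \autoref{lem:F_leq_G_=>_F_leq_FcG_leq_GcF_leq_G} one has $F \cole F \circ G \cole G$, so either both inequalities are strict (recurse on each half), or $G = F \circ G$ (then $F \subseteq G$ and a single face-lattice chain gives the whole sequence), or $F = F \circ G$. In that last case, if $S(F,G) = \varnothing$ then $G \subseteq F$ symmetrically; if $S(F,G) \neq \varnothing$ --- which is precisely your $D = \varnothing$, $U_- \neq \varnothing$, irrespective of $U_0$ --- the paper applies the elimination axiom once (\autoref{lem:middle_covector_exists}) to obtain \emph{some} covector $X$ with $F \col X \col G$, and recurses on $(F,X)$ and $(X,G)$. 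Your rank $\rho(F,G)$ equals $\sum_H \big( F(H)-G(H) \big)$ and is additive along $\cole$, so both halves are strictly smaller and the induction closes.

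Your self-duality manoeuvre for $U_0 \neq \varnothing$ is correct but buys nothing: it shrinks $U_0$ while leaving $U_-$ unchanged, so you still land in the sub-case $D = U_0 = \varnothing$, $U_- \neq \varnothing$. There you assert that a \emph{facet} $F^*$ of $F$ with new zeros contained in $U_-$ exists ``by the elimination axiom''. This is the gap: elimination hands you an intermediate covector, not a facet of $F$. The facet you want does exist --- in this sub-case $\vspan(F)=\vspan(G)$, so $F$ and $G$ become distinct regions of the restriction to $\vspan(F)$, and any wall of $F$ separating it from $G$ yields such an $F^*$ --- but that is a separate geometric argument you have not supplied. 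The cleaner fix is to drop the facet requirement entirely and use the elimination axiom to get any strict intermediate $X$, then recurse on both pieces; that is exactly the paper's route and it dispatches the whole $U_- \neq \varnothing$ case in one stroke, with no duality and no split on $U_0$.
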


Due to this theorem, the two classes of cover relations from \autoref{prop:covers_for_fwo} describe all the cover relations for the facial weak order.

\begin{corollary}
    \label{cor:fwo_covers}
    For two faces~$F, G \in \faces$, we have $F \fwcover G$ in the facial weak order if and only if $\abs{\dim F - \dim G} = 1$ and either~${F \subseteq G}$ and $M_F = M_G$ or $G \subseteq F$ and $m_F = m_G$ if and only if $F \fwle G$, $\abs{\dim F - \dim G} = 1$ and either $F \subseteq G$ or~${G\subseteq F}$.
\end{corollary}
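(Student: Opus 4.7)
The plan is to prove the two equivalences independently. Label the three conditions (a) $F \fwcover G$, (b) $\abs{\dim F - \dim G} = 1$ together with either ($F \subseteq G$ and $M_F = M_G$) or ($G \subseteq F$ and $m_F = m_G$), and (c) $F \fwle G$ together with $\abs{\dim F - \dim G} = 1$ and either $F \subseteq G$ or $G \subseteq F$. I aim to show (b) $\Rightarrow$ (a), (a) $\Rightarrow$ (c), and (c) $\Rightarrow$ (b), which closes the cycle.

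The implication (b) $\Rightarrow$ (a) is a direct restatement of \autoref{prop:covers_for_fwo}, nothing further is needed. For (c) $\Rightarrow$ (b), I invoke \autoref{prop:two_implies_third}: assuming $F \fwle G$, $\abs{\dim F - \dim G} = 1$ and $F \subseteq G$, conditions \ref{item:two_1} and \ref{item:two_3} of \autoref{prop:two_implies_third} hold, and therefore condition \ref{item:two_2} must hold, i.e.\ $M_F = M_G$. The symmetric case $G \subseteq F$ gives $m_F = m_G$ in the same way.

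The substantive implication is (a) $\Rightarrow$ (c), and it is here that the chain description from \autoref{thm:equivalence1}\ref{item:fwo_covers} does the work. Assuming $F \fwcover G$, in particular $F \fwle G$, so \autoref{thm:equivalence1} provides a sequence $F = F_1, F_2, \ldots, F_n = G$ in which each consecutive pair $F_i, F_{i+1}$ satisfies the hypotheses of \autoref{prop:covers_for_fwo}. Consequently each such pair is itself a cover relation in $\fw$. But $F \fwcover G$ admits no strict intermediate element, forcing $n=2$. That single step immediately yields $\abs{\dim F - \dim G} = 1$ together with $F \subseteq G$ or $G \subseteq F$, which is (c).

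The main (and only real) obstacle is ensuring that the intermediate faces produced by the chain characterization in \autoref{thm:equivalence1} are genuinely strict intermediates in $\fw$ so that the cover hypothesis collapses the chain to length one; this is automatic because each step of the chain changes the dimension by exactly one and therefore gives distinct faces, while the cover relations along the chain propagate to $F \fwl F_i \fwl G$ by transitivity of $\fwle$.
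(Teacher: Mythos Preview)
Your proof is correct and follows the same approach the paper intends: the corollary is stated as an immediate consequence of \autoref{thm:equivalence1}, and you have simply spelled out the cycle (b)$\Rightarrow$(a)$\Rightarrow$(c)$\Rightarrow$(b) explicitly, invoking \autoref{prop:covers_for_fwo}, the chain description in \autoref{thm:equivalence1}\ref{item:fwo_covers}, and \autoref{prop:two_implies_third} at the appropriate points. The only addition beyond the paper's one-line justification is your explicit handling of (c)$\Rightarrow$(b) via \autoref{prop:two_implies_third}, which is indeed the right tool.
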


Before proving \autoref{thm:equivalence1}, we need the following two lemmas. 

\begin{lemma}
	\label{lem:F_leq_G_=>_F_leq_FcG_leq_GcF_leq_G}
	For $F, G \in \covectors(\arrangement)$, if $F \cole G$ then $F \cole F \circ G \cole G \circ F \cole G$.
\end{lemma}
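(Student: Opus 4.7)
The plan is to verify each of the three inequalities $F \cole F \circ G$, $F \circ G \cole G \circ F$, and $G \circ F \cole G$ by unpacking \autoref{def:leqL} and checking the required sign inequality pointwise at each hyperplane $H \in \arrangement$, doing a small case analysis on whether $F(H)$ and $G(H)$ vanish. The hypothesis $F \cole G$ simply says $G(H) \le F(H)$ with respect to the sign order $- < 0 < +$, and the composition rule tells us that $(F \circ G)(H)$ equals $F(H)$ when $F(H) \ne 0$ and equals $G(H)$ otherwise; I will keep re-applying this throughout.

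For the first inequality, I need $(F \circ G)(H) \le F(H)$. If $F(H) \ne 0$, the two sides are equal. If $F(H) = 0$, then $(F \circ G)(H) = G(H)$, and the hypothesis gives $G(H) \le F(H) = 0$, which is exactly what we need. The third inequality $G \circ F \cole G$ is verified by the symmetric argument: when $G(H) \ne 0$ both sides agree, and when $G(H) = 0$ the hypothesis $G(H) \le F(H)$ forces $F(H) \ge 0 = G(H)$, giving $(G \circ F)(H) = F(H) \ge G(H)$.

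For the middle inequality $F \circ G \cole G \circ F$, I will split into four cases according to whether each of $F(H)$ and $G(H)$ is zero. If both vanish, both compositions return $0$. If exactly one vanishes, both compositions return the same nonzero value. The only substantive case is when both $F(H)$ and $G(H)$ are nonzero, in which case $(F \circ G)(H) = F(H)$ and $(G \circ F)(H) = G(H)$, and the desired inequality $(G \circ F)(H) \le (F \circ G)(H)$ is exactly the hypothesis $G(H) \le F(H)$.

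There is no real obstacle here; the lemma is essentially a bookkeeping statement about how composition of sign vectors interacts with the coordinatewise order $\cole$. The only thing to be careful about is not to confuse the direction of the inequality: $\cole$ reverses the natural pointwise order, so the larger the covector in the sign order, the smaller it is in $\cole$, and the composition $F \circ G$ sits between $F$ and $G$ in the sign order at every coordinate, which is precisely what makes the chain work.
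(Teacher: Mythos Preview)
Your proof is correct and takes essentially the same approach as the paper: both arguments verify the chain $G(H) \le (G\circ F)(H) \le (F\circ G)(H) \le F(H)$ pointwise by a small case analysis on the signs. The paper organizes the cases by the value of $G(H)$ and checks the whole chain at once, whereas you prove each of the three inequalities separately with cases on whether $F(H)$ and $G(H)$ vanish; these are minor organizational differences, not different ideas.
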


\begin{proof}
    Suppose $F \cole G$, \ie for all $H \in \arrangement$, we have~$G(H) \leq F(H)$. Then
    \begin{itemize}
    	\item if $G(H) = +$ then $G(H) = (G \circ F)(H) = (F \circ G)(H) = F(H) = +$,
    	\item if $G(H) = 0$ then $G(H) \leq (G \circ F)(H) = (F \circ G)(H) = F(H)$,
    	\item if $G(H) = -$ then $G(H) = (G \circ F)(H) = - \leq (F \circ G)(H) \leq F(H)$.
    	The first inequality is an equality when $F(H) = 0$ else the second inequality becomes an equality.
	\end{itemize}
    Therefore in all three cases we have $G(H) \leq (G \circ F)(H) \leq (F \circ G)(H) \leq F(H)$ for arbitrary $H$ giving us the desired result.
\end{proof}

\begin{lemma}
    \label{lem:middle_covector_exists}
    If $F \col G$ and $S(F,G) \ne \varnothing$, then there exists ${F \col X \col G}$.
\end{lemma}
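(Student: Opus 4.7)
The plan is to apply the elimination axiom of \autoref{defn:oriented_matroid} directly to a single hyperplane in $S(F,G)$, producing the desired strictly intermediate covector $X$ in one shot. The key preliminary observation is that, for every $H \in S(F,G)$, the hypotheses $F \cole G$ and $F(H) = -G(H) \neq 0$ together force $F(H) = +$ and $G(H) = -$: the only pairing of nonzero opposite signs compatible with $G(H) \leq F(H)$ in the order $- < 0 < +$ is this one.

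Now I would fix some $H_0 \in S(F,G)$ and invoke the elimination axiom on $F$, $G$, and $H_0$ to obtain $X \in \covectors(\arrangement)$ with $X(H_0) = 0$ and $X(H) = (F \circ G)(H) = (G \circ F)(H)$ for all $H \notin S(F,G)$. To finish, I would verify the sandwich $G(H) \leq X(H) \leq F(H)$ for every $H \in \arrangement$, which by \autoref{def:leqL} is exactly $F \cole X \cole G$. For $H \in S(F,G)$ this is automatic by the preliminary observation, since any sign fits between $G(H) = -$ and $F(H) = +$. For $H \notin S(F,G)$, a short case split on whether $F(H)$ and $G(H)$ agree or one of them is zero shows that $X(H) = (F \circ G)(H)$ always lies between $G(H)$ and $F(H)$, with $F \cole G$ pinning down the sign of the nonzero coordinate in the mixed-zero cases.

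Strictness of both $F \col X$ and $X \col G$ will then be witnessed by $H_0$ alone, since $X(H_0) = 0$ differs from both $F(H_0) = +$ and $G(H_0) = -$. I do not anticipate a serious obstacle: the elimination axiom is tailor-made to zero out one coordinate inside the separation set while agreeing with the composition outside it, which is exactly the surgery needed to interpolate the order $\cole$ across a nonempty $S(F,G)$. The only care point is the bookkeeping of signs on coordinates outside $S(F,G)$, where one matches the output of elimination with the constraint coming from $F \cole G$.
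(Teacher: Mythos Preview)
Your proposal is correct and follows essentially the same argument as the paper's proof: apply the elimination axiom at a chosen $H_0 \in S(F,G)$, then verify $G(H) \le X(H) \le F(H)$ by splitting into the cases $H \in S(F,G)$ (where $G(H)=-$ and $F(H)=+$ make the inequality automatic) and $H \notin S(F,G)$ (where $X(H) = (F\circ G)(H)$ and a short sign check finishes), with strictness witnessed by $X(H_0)=0$.
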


\begin{proof}
    Since $S(F,G)$ is non-empty, by the elimination axiom in \autoref{defn:oriented_matroid} for each $H \in S(F,G)$ there exists a $X \in \covectors(\arrangement)$ such that $X(H) = 0$ and for all~${H'\notin S(F,G)}$ then $X(H') = (F \circ G)(H') = (G \circ F)(H')$.
    Thus let $H$ be an arbitrary hyperplane in~$S(F,G)$ and let~$X$ be the associated covector in~${\covectors(\arrangement)}$.
    
    Since $H \in S(F,G)$ and $G(H) \leq F(H)$ we are forced to have~${G(H) = -}$ and~${F(H) = +}$.
    Furthermore, since $X(H) = 0$ we see that our three faces are distinct,~${F \ne X \ne G}$.
    It therefore suffices to show that $G(H') \leq X(H') \leq F(H')$ for all~${H' \in \arrangement \setm \{H\}}$.

    Suppose first that $H' \in S(F,G)$.
    Since $G(H') \leq F(H')$ and ${F(H') = -G(H') \ne 0}$ then $G(H') = -$ and $F(H') = +$.
    Thus $G(H') \leq X(H') \leq F(H')$ as desired.

    Suppose next that $H' \notin S(F,G)$.
    Since $G(H') \leq F(H')$ and ${-G(H') \ne F(H')}$ or ${G(H') = F(H') = 0}$, there are three cases to consider:
    \begin{itemize}
	    \item if $F(H') = G(H')$ then $F(H') = G(H') = (F \circ G)(H') = X(H')$,
    	\item if $F(H') = 0$ and $G(H') = -$ then $X(H') = (F \circ G)(H') = G(H') < F(H)$,
		\item if $F(H') = +$ and $G(H') = 0$ then $X(H') = (F \circ G)(H') = F(H') > G(H)$.
	\end{itemize}
    Therefore $G(H') \leq X(H') \leq F(H')$ and thus $F \col X \col G$.
\end{proof}

We now prove \autoref{thm:equivalence1}.

\begin{proof}[Proof of \autoref{thm:equivalence1}]
    We show that the points \autoref{item:fwo_faces}, \autoref{item:fwo_covers}, and \autoref{item:fwo_covects} are equivalent by showing the implications \autoref{item:fwo_covers} $\Rightarrow$ \autoref{item:fwo_faces} $\Rightarrow$ \autoref{item:fwo_covects} $\Rightarrow$ \autoref{item:fwo_covers}.

	\medskip\noindent
    \boxed{\textbf{\autoref{item:fwo_covers} $\Rightarrow$ \autoref{item:fwo_faces}}}
    By \autoref{prop:covers_for_fwo} the sequence~${F_1, \ldots, F_n}$ gives a a chain of covers~${F = F_1 \fwcover F_2 \fwcover \cdots \fwcover F_n = G}$ and therefore $F \fwle G$ as desired.
	
	\medskip\noindent
    \boxed{\textbf{\autoref{item:fwo_faces} $\Rightarrow$ \autoref{item:fwo_covects}}}
    Suppose $F \fwle G$ in the facial weak order, \ie $m_F \prle m_G$ and~${M_F \prle M_G}$.
    To show $F \cole G$ it suffices to show $G(H) \leq F(H)$ for arbitrary hyperplane~$H \in \arrangement$.
    If $G(H) = -$, then $G(H) \leq F(H)$ always.
    If $G(H) = +$ then by \autoref{lem:covector_and_sepSet}, $H \notin S(M_G)$.
    But since $M_F \prle M_G$ then $S(M_F) \subseteq S(M_G)$, in other words, $H \notin S(M_F)$.
    Applying \autoref{lem:covector_and_sepSet} again gives $F(H) = +$.
    Finally, if $G(H) = 0$ then by \autoref{lem:covector_and_sepSet}, $H \in S(M_G) \setm S(m_G)$.
    Therefore $H \notin S(m_G)$ and since $m_F \prle m_G$ we get $H \notin S(m_F)$.
    Thus by \autoref{lem:covector_and_sepSet}, $F(H) \ne -$ and~${G(H) = 0 \leq F(H)}$ as desired.
	
	\medskip\noindent
    \boxed{\textbf{\autoref{item:fwo_covects} $\Rightarrow$ \autoref{item:fwo_covers}}}
    We do this by induction on the path length from $F$ to $G$.
    Our base case of $F = G$ trivially holds.
	Suppose now that $F \col G$.
    By \autoref{lem:F_leq_G_=>_F_leq_FcG_leq_GcF_leq_G}, we have $F \cole F \circ G \cole G$.
    There are three cases two consider: 
    \begin{itemize}
    \item Suppose first that our inequalities are strict, \ie $F \col F \circ G \col G$.
	Then by induction $F \col F \circ G$ and $F \circ G \col G$ gives a chain of covers $\fwcover$ such that $F = F_1 \fwcover \cdots \fwcover F_i \fwcover F\circ G \fwcover G_1 \fwcover \cdots \fwcover G_j = G$.
	
    \item If $G = F \circ G$, then by  \autoref{prop:face_inclusion_on_covectors}, $F \subseteq G$.
        In particular, there exists a chain of faces, $F = F_0 \subseteq F_1 \subseteq \cdots \subseteq F_n = G$ such that~${\abs{\dim F_i - \dim F_{i - 1}} = 1}$ for all $i$ by the face lattice being graded.
        It remains to show~${M_{F_i} = M_{F_{i+1}}}$.
        Since $F_i \subseteq F_{i+1}$ then for each $H \in \arrangement$, either $F_i(H) = 0$ or~${F_i(H) = F_{i+1}(H)}$.
        If $F_i(H) = F_{i+1}(H)$ then $H \in S(M_F)$ if and only if~${H \in S(M_{F_{i+1}})}$.
        If~${F_i(H) = 0}$ then $F_{i+1}(H) = -$ (else $F(H) = 0$ and $G(H) = +$ by inclusion, contradicting the fact that $F \col G$).
        By \autoref{lem:covector_and_sepSet}~${F_i(H) = 0}$ implies~${H \in S(M_{F_i})}$ and $F_{i+1}(H) = -$ implies~${H \in S(M_{F_{i+1}})}$.
        Therefore~${S(M_{F_i}) = S(M_{F_{i+1}})}$ implying $M_{F_i} = M_{F_{i+1}}$ as desired.
	
    \item If $F = F \circ G$ we have two further cases to consider.
    First, if 
    \[
        S(F, G) = \set{H \in \arrangement}{F(H) = -G(H) \ne 0} = \varnothing
    \]
    then $G \circ F = F \circ G = F$.
    In particular, by  \autoref{prop:face_inclusion_on_covectors}, as $F = G \circ F$ then $G \subseteq F$ and using the sequences of faces $G = F_n, F_{n-1}, \ldots, F_1 = F$, then as in the previous case we have $\abs{\dim F_i - \dim F_{i+1}}$, $F_i \supseteq F_{i+1}$ and~${m_{F_{i}} = m_{F_{i+1}}}$ as desired.
    Finally, suppose $S(F,G) \ne \varnothing$.
    By \autoref{lem:middle_covector_exists} there exists a $X$ such that $F \col X \col G$.
    Thus, by inducting on this gives us the desired result.
    \qedhere
    \end{itemize}
\end{proof}

Moreover, using the covector definition, we show that the structure of an interval in $\fw$ is not altered by a change of base region as long as the new region is below the bottom element of our interval.

\begin{proposition}
	\label{lem_rotation}
    Let $X,Y$ be covectors in $\covectors(\arrangement)$ such that $X \fwle Y$ in $\fw$.
    If $B'$ is a region such that $B' \fwle X$ in $\fw$, then the intervals $[X,Y]$ in~$\fw$ and in $\fw[\arrangement][B']$ are isomorphic.
\end{proposition}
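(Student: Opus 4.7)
The plan is to use the covector description \autoref{item:fwo_covects} in \autoref{thm:equivalence1}: two faces satisfy $F \fwle G$ if and only if $G(H) \le F(H)$ for every $H \in \arrangement$, where the signs are computed with the normals $e_H$ oriented so that $B \subseteq H^+$. Switching the base region from $B$ to $B'$ forces us to replace $e_H$ by $-e_H$ for every $H \in S(B,B')$, which amounts to flipping the covector entries of every face on precisely those coordinates. Denoting by $F'$ the resulting covector of a face $F$ under the $B'$-orientation, one has $F'(H) = -F(H)$ for $H \in S(B,B')$ and $F'(H) = F(H)$ otherwise. The proposition therefore reduces to showing that this flip alters neither the underlying set of $[X,Y]$ nor the order relation restricted to it.

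The crucial observation is that every face $Z$ lying in either of the two intervals satisfies $Z(H) = -$ at each $H \in S(B,B')$. Indeed, \autoref{thm:equivalence1} applied to $B' \fwle X$ gives $X(H) \le B'(H)$ for every $H$; since $B'(H) = -$ on $S(B,B')$ (a region-covector takes only the values $\pm$), this forces $X(H) = -$. Combined with $X \fwle Y$ it also forces $Y(H) = -$ on $S(B,B')$. For $Z$ in the interval with respect to $B$, one then has $Z(H) \le X(H) = -$, hence $Z(H) = -$. For $Z$ in the interval with respect to $B'$, after reorientation $X'(H) = Y'(H) = +$ on $S(B,B')$, and the condition $Y'(H) \le Z'(H)$ forces $Z'(H) = +$, i.e. $Z(H) = -$.

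Once every covector in either interval is pinned to $-$ on $S(B,B')$, the equivalence of the two orders becomes immediate. For any two such faces $Z_1, Z_2$, the $B$-criterion $Z_2(H) \le Z_1(H)$ for all $H$ and the $B'$-criterion $Z_2'(H) \le Z_1'(H)$ for all $H$ agree: on coordinates $H \in S(B,B')$ both are trivially satisfied (all entries equal to $-$ on one side and to $+$ on the other), while on coordinates $H \notin S(B,B')$ the covectors are unchanged. Thus the two intervals coincide as subsets of $\faces$, and the identity on faces is an order isomorphism between them.

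The only subtle point is keeping the reorientation straight; the hypothesis $B' \fwle X$ is precisely what pins the flipped coordinates to a constant value throughout the interval, which is what makes the change of base invisible to the partial order. Everything else is essentially bookkeeping once \autoref{thm:equivalence1} is in hand.
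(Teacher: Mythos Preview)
Your argument is correct and follows essentially the same approach as the paper: reorient the covectors on the hyperplanes in $S(B,B')$, use $B' \fwle X$ to pin every face of the interval to $-$ on those coordinates, and conclude that the comparison is unchanged. Your version is in fact more carefully written than the paper's, since you explicitly verify that the intervals coincide as sets from both sides rather than just one direction.
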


\begin{proof}
    Changing the base region from $B$ to $B'$ switches the orientation on any hyperplane in the separation set $S(B,B')$ and leaves the other hyperplanes with the same orientation.
    Since $B' \fwle X$, we have $X(H)=-$ whenever $H\in S(B,B')$ where $B$ is the base region.
    Hence, $Z(H)=-$ as well whenever $X \fwle Z$.
    After the reorientation, $X(H) = + = Z(H)$ for $H\in S(B,B')$.
    As the orientations of the hyperplanes not in $S(B,B')$ are unchanged, we conclude that the interval $[X,Y]$ is the same in $\fw[\arrangement][B']$ as in $\fw$.
\end{proof}

We finally derive a criterion to compare two faces of the base region~$B$ in the facial weak order.

\begin{corollary}
	\label{cor:fwle_B}
	For any faces~$F,G$ of the base region~$B$, we have~$F \supseteq G$ if and only if $F \fwle G$.
	Similarly, for any faces~$F,G$ of the region~$-B$ opposite to the base region~$B$, we have~$F \subseteq G$ if and only if $F \fwle G$.
\end{corollary}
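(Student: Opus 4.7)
The plan is to reduce the claim to a direct comparison of covectors, using the two ingredients that are now at our disposal: the characterization of face containment in~$B$ via covectors (\autoref{cor:containment_B}) and the characterization of~$\fwle$ via covectors (\autoref{thm:equivalence1}\autoref{item:fwo_covects}). The key observation is that if~$F$ is a face of~$B$ then \autoref{cor:containment_B} gives~$F(H) \geq 0$ for all~$H \in \arrangement$, so the covectors of faces of~$B$ live in~$\{0,+\}^{\arrangement}$; dually, any face~$F$ of~$-B$ satisfies~$F(H) \leq 0$ for every~$H \in \arrangement$, since by \autoref{prop:face_inclusion_on_covectors} we must have~$F(H) = 0$ or $F(H) = (-B)(H) = -$. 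Restricted to these sub-lattices of sign vectors, the coordinatewise order~$\cole$ (where $- < 0 < +$) collapses onto the natural refinement order, which is exactly face containment.

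Concretely, for the first statement I would argue as follows. Assume~$F, G \subseteq B$. By \autoref{thm:equivalence1}, $F \fwle G$ is equivalent to $G(H) \leq F(H)$ for all~$H \in \arrangement$. Since both~$F(H), G(H) \in \{0,+\}$, the inequality~$G(H) \leq F(H)$ is equivalent to the statement that either~$G(H) = 0$ or~$G(H) = F(H) = +$. By \autoref{prop:face_inclusion_on_covectors}, this is precisely the condition~$G \subseteq F$, \ie~$F \supseteq G$.

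For the second statement, I would run the symmetric argument. Assume~$F, G \subseteq -B$, so that~$F(H), G(H) \in \{0,-\}$ for all~$H \in \arrangement$. Again by \autoref{thm:equivalence1}, $F \fwle G$ is equivalent to $G(H) \leq F(H)$ for all~$H$, which on the sign set~$\{0,-\}$ is equivalent to the condition that either~$F(H) = 0$ or~$F(H) = G(H) = -$. By \autoref{prop:face_inclusion_on_covectors}, this is the condition~$F \subseteq G$.

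There is no serious obstacle here: the corollary is a direct unpacking of the covector description of the facial weak order on the two extremal face sub-lattices. The only thing to be careful about is getting the direction of the inequalities right in each of the two cases, which is why the containment flips between~$F \supseteq G$ for faces of~$B$ and~$F \subseteq G$ for faces of~$-B$.
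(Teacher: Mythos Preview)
Your proof is correct and follows essentially the same approach as the paper: both arguments reduce to the covector description of~$\fwle$ together with \autoref{cor:containment_B} and \autoref{prop:face_inclusion_on_covectors}. The only cosmetic difference is that the paper verifies one implication explicitly and defers the converse to \autoref{prop:face_inclusion_on_covectors}, whereas you run the equivalence as a single iff-chain on the restricted sign alphabets~$\{0,+\}$ and~$\{0,-\}$.
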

\begin{proof}
	Consider a hyperplane~$H \in \arrangement$.
	Since~$F$ is a face of the base region~$B$, we have~$F(H) \ge 0$ by \autoref{cor:containment_B}.
    Since~$F \supseteq G$, we have~${G(H) = 0}$ or~${G(H) = F(H)}$ by \autoref{prop:face_inclusion_on_covectors}.
	Therefore, $F(H) \ge G(H)$ in both cases.
    We conclude that~${F \fwle G}$.
    The converse can be deduced from \autoref{prop:face_inclusion_on_covectors}.
	The proof for the second assertion is identical.
\end{proof}


\subsection{Root inversion sets}
\label{subsec:root_inversion_sets}

We now provide an alternative combinatorial encoding of the covectors in terms of certain sets of normal vectors that will be related to the geometry of the corresponding zonotope in the next section.
Recall that, by convention in this paper, $e_H$ is the fixed normal vector to the hyperplane $H \in \arrangement$ such that the base region $B$ lies in $H^+$.
We need the following three sets:
\[
\Phi_\arrangement^+ \coloneqq \set{e_H}{H \in \arrangement},
\quad
\Phi_\arrangement^- \coloneqq \set{-e_H}{H \in \arrangement},
\quad\text{ and }\quad
\Phi_\arrangement \coloneqq \Phi_\arrangement^+ \cup \Phi_\arrangement^-.
\]
We call the elements in $\Phi_\arrangement$ the \definition{roots}%
\footnote{This terminology is once again inherited from Coxeter systems, but it should be noted that these roots do not necessarily form root systems.}
of the arrangement $\arrangement$ and the elements in~$\Phi_\arrangement^+$ and~$\Phi_\arrangement^-$ the positive and negative roots respectively.
For~$X \subseteq \Phi_\arrangement$, we denote by~$X^+ \coloneqq X \cap \Phi_\arrangement^+$ the positive part and by~$X^- \coloneqq X \cap \Phi_\arrangement^-$ the negative part.
An example of this construction is given in \autoref{fig:A2} where the roots give the root system for the type $A_2$ Coxeter arrangement.

\begin{defn}
The \definition{root inversion set} of a face $F \in \faces$ is
\[
    \rootInversionSet(F) = \set{e \in \Phi_\arrangement}{\gen{x, e} \leq 0 , \text{ for some } x \in \int(F)}.
\]
\end{defn}

\begin{figure}[t]
\DeclareDocumentCommand{\rs}{ O{1.1cm} O{->} m m O{0} O{0}} {
	\def \radius {#1}
	\def \inputPoints{#3}
	\def \excludeRoots{#4}
	\def \style {#2}
	\def \initialRotation {#5}
	\def \extraStyle {#6}

	\pgfmathtruncatemacro{\points}{\inputPoints * 2}
	\pgfmathsetmacro{\degrees}{360 / \points}
	
	\coordinate (0) at (0,0);
	
	\foreach \x in {1,...,\points}{%
		\pgfmathsetmacro{\location}{(\points+(\x-1))*\degrees + \initialRotation}
		
		\coordinate (\x) at (\location:\radius);
	}

	\ifthenelse{\equal{\excludeRoots}{}}{
		\foreach \x in {1,...,\points}{%
			\ifthenelse{\equal{\extraStyle}{0}}{
				\draw[\style] (0) -- (\x);
			}{
				\draw[\style, \extraStyle] (0) -- (\x);
			}
		}
	}{
		\foreach \x in {1,...,\points}{%
			\edef \showPoint {1};

			\foreach \y in \excludeRoots {
				\ifthenelse{\equal{\x}{\y}}{
					\xdef \showPoint {0};
				}{}
			}
			
			\ifthenelse{\equal{\showPoint}{1}}{
				\ifthenelse{\equal{\extraStyle}{0}}{
					\draw[\style] (0) -- (\x);
				}{
					\draw[\style, \extraStyle] (0) -- (\x);
				}
			}{}
		}
	}  
}

\centerline{
	\begin{tikzpicture}
		[
            scale=1.2,
        vface/.style={color=red!95!black},
        face/.style={draw=red!95!black,fill=red!95!black,color=red!95!black},
        vertex/.style={inner sep=1pt,circle,draw=blue!85!white,fill=blue!85!white,thick},
        vvertex/.style={color=blue!85!white},
        vdomain/.style={color=green!85!black},
        fdomain/.style={fill=green!15!white,color=green!15!white,opacity=0.2},
		]
        \rs[2][white]{3}{}[30];
        \draw[face] (1) -- (2);
        \draw[face] (2) -- (3);
        \draw[face] (3) -- (4);
        \draw[face] (4) -- (5);
        \draw[face] (5) -- (6);
        \draw[face] (6) -- (1);
        \node[vertex] at (1) {};
        \node[vertex] at (2) {};
        \node[vertex] at (3) {};
        \node[vertex] at (4) {};
        \node[vertex] at (5) {};
        \node[vertex] at (6) {};
        \node[vvertex, above right] at (1) {$\mathbf{\tau(R_2)}$};
        \node[vvertex, above] at (2) {$\mathbf{\tau(R_3)}$};
        \node[vvertex, above left] at (3) {$\mathbf{\tau(R_4)}$};
        \node[vvertex, below left] at (4) {$\mathbf{\tau(R_5)}$};
        \node[vvertex, below] at (5) {$\mathbf{\tau({B})}$};
        \node[vvertex, below right] at (6) {$\mathbf{\tau(R_1)}$};
        \node[vface] at (240:2.2) {$\mathbf{\tau(F_5)}$};
        \node[vface] at (300:2.2) {$\mathbf{\tau(F_0)}$};
        \node[vface] at (0:2.2) {$\mathbf{\tau(F_1)}$};
        \node[vface] at (60:2.2) {$\mathbf{\tau(F_2)}$};
        \node[vface] at (120:2.2) {$\mathbf{\tau(F_3)}$};
        \node[vface] at (180:2.2) {$\mathbf{\tau(F_4)}$};
        \fill[fdomain] (6) -- (1) -- (2) -- (3) -- (4) -- (5) -- cycle {};
        \node[vdomain] at (0) {$\left\{ 0 \right\}$};
	\end{tikzpicture}
	\begin{tikzpicture}
        [scale=1.2,
        vface/.style={color=red!95!black},
        face/.style={draw=red!95!black,fill=red!95!black,color=red!95!black},
        vertex/.style={inner sep=1pt,circle,draw=blue!85!white,fill=blue!85!white,thick},
        vvertex/.style={color=blue!85!white},
        vdomain/.style={color=green!85!black},
        fdomain/.style={fill=green!15!white,color=green!15!white,opacity=0.2},
		]
		%
        \rs[2][white]{3}{}[30];
        \fill[fdomain] (6) -- (1) -- (2) -- (3) -- (4) -- (5) -- cycle {};
        \draw[face] (1) -- (2);
        \draw[face] (2) -- (3);
        \draw[face] (3) -- (4);
        \draw[face] (4) -- (5);
        \draw[face] (5) -- (6);
        \draw[face] (6) -- (1);
        \coordinate (c0) at (270:2);
        \coordinate (c1) at (330:2);
        \coordinate (c2) at (30:2);
        \coordinate (c3) at (90:2);
        \coordinate (c4) at (150:2);
        \coordinate (c5) at (210:2);
        \coordinate (f5) at (240:1.732);
        \coordinate (f0) at (300:1.732);
        \coordinate (f1) at (0:1.732);
        \coordinate (f2) at (60:1.732);
        \coordinate (f3) at (120:1.732);
        \coordinate (f4) at (180:1.732);
		%
		\begin{scope}[shift={(c4)}, scale=0.5,vvertex]
			\node[above left] {$\rootInversionSet(R_4)$};
			\begin{scope}[scale=0.8]
				\rs[1][ultra thick]{3}{1,2,3}[90][->]
			\end{scope}
		\end{scope}
		\begin{scope}[shift={(c3)}, scale=0.5,vvertex]
			\node[above] {$\rootInversionSet(R_3)$};
			\begin{scope}[scale=0.8]
				\rs[1][ultra thick]{3}{1,2,6}[90][->]
			\end{scope}
		\end{scope}
		\begin{scope}[shift={(c5)}, scale=0.5,vvertex]
			\node[below left] {$\rootInversionSet(R_5)$};
			\begin{scope}[scale=0.8]
				\rs[1][ultra thick]{3}{2,3,4}[90][->]
			\end{scope}
		\end{scope}
		\begin{scope}[shift={(c2)}, scale=0.5,vvertex]
			\node[above right] {$\rootInversionSet(R_2)$};
			\begin{scope}[scale=0.8]
				\rs[1][ultra thick]{3}{1,5,6}[90][->]
			\end{scope}
		\end{scope}
		\begin{scope}[shift={(c0)}, scale=0.5,vvertex]
            \node[below] {$\rootInversionSet({B})$};
			\begin{scope}[scale=0.8]
				\rs[1][ultra thick]{3}{3,4,5}[90][->]
			\end{scope}
		\end{scope}
		\begin{scope}[shift={(c1)}, scale=0.5,vvertex]
			\node[below right] {$\rootInversionSet(R_1)$};
			\begin{scope}[scale=0.8]
				\rs[1][ultra thick]{3}{4,5,6}[90][->]
			\end{scope}
		\end{scope}
		%
		\begin{scope}[shift={(f4)}, scale=0.5, face]
			\node[vface,left] {$\rootInversionSet(F_4)$};
			\begin{scope}[scale=0.8]
				\rs[1][ultra thick]{3}{2,3}[90][->]
			\end{scope}
		\end{scope}
		\begin{scope}[shift={(f3)}, scale=0.5,face]
			\node[vface,above left] {$\rootInversionSet(F_3)$};
			\begin{scope}[scale=0.8]
				\rs[1][ultra thick]{3}{1,2}[90][->]
			\end{scope}
		\end{scope}
		\begin{scope}[shift={(f5)}, scale=0.5,face]
            \node[vface,below left] {$\rootInversionSet(F_5)$};
			\begin{scope}[scale=0.8]
				\rs[1][ultra thick]{3}{3,4}[90][->]
			\end{scope}
		\end{scope}
		\begin{scope}[shift={(f2)}, scale=0.5,face]
			\node[vface,above right] {$\rootInversionSet(F_2)$};
			\begin{scope}[scale=0.8]
				\rs[1][ultra thick]{3}{1,6}[90][->]
			\end{scope}
		\end{scope}
		\begin{scope}[shift={(f0)}, scale=0.5,face]
			\node[vface,below right] {$\rootInversionSet(F_0)$};
			\begin{scope}[scale=0.8]
				\rs[1][ultra thick]{3}{4,5}[90][->]
			\end{scope}
		\end{scope}
		\begin{scope}[shift={(f1)}, scale=0.5,face]
            \node[vface,right] {$\rootInversionSet(F_1)$};
			\begin{scope}[scale=0.8]
				\rs[1][ultra thick]{3}{5,6}[90][->]
			\end{scope}
		\end{scope}
		%
        \begin{scope}[shift={(0,0)}, scale=0.5,vdomain]
            \node[below,vdomain] at (0, -.7) {$\rootInversionSet(\left\{ 0 \right\})$};
			\begin{scope}[scale=0.8]
				\rs[1][ultra thick]{3}{}[90][->]
			\end{scope}
		\end{scope}
	\end{tikzpicture}
}
	\caption{The type~$A_2$ Coxeter arrangement. On the left is the zonotope created by the $\tau$ map in \autoref{lem:bijection_zono_faces_and_arrangement_faces}. On the right we label each face with the root inversion set for that face. See \autoref{ex:Coxeter6} and \autoref{ex:Coxeter7}.}
	\label{fig:A2Roots}
\end{figure}

The following lemma shows the relationship between a root being present in a root inversion set and the sign of the covector for the associated hyperplane.
An example of this relationship can be seen in \autoref{fig:A2Covectors} and \autoref{fig:A2Roots}.

\begin{lemma}
    \label{lem:roots_to_covs}
    For any~$F \in \faces$ and $H \in \arrangement$,
    \begin{enumerate}
        \item $F(H) = -$ if and only if $e_H \in \rootInversionSet(F)$ and $-e_H \notin \rootInversionSet(F)$.
        \item $F(H) = 0$ if and only if $e_H \in \rootInversionSet(F)$ and $-e_H \in \rootInversionSet(F)$.
        \item $F(H) = +$ if and only if $e_H \notin \rootInversionSet(F)$ and $-e_H \in \rootInversionSet(F)$.
    \end{enumerate}
    In other words, 
    \[
    	\rootInversionSet(F)^+ = \set{e_H}{H \in \arrangement, \; F(H) \le 0}
		\quad\text{and}\quad
		\rootInversionSet(F)^- = \set{-e_H}{H \in \arrangement, \; F(H) \ge 0}.
    \]
\end{lemma}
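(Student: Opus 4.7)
The plan is to reduce everything to the definitions of the covector $\hat{\sigma}(F)$ and the root inversion set $\rootInversionSet(F)$, once we observe that on the relative interior $\int(F)$ the sign of $\langle x, e_H \rangle$ is constant (this is precisely the well-definedness of $\hat{\sigma}$ noted after its definition in \autoref{subsec:covectors}). So for $x \in \int(F)$, the value of $\langle x, e_H \rangle$ has sign $F(H)$, independently of which $x$ we pick; in particular, the quantifier ``for some $x \in \int(F)$'' in the definition of $\rootInversionSet(F)$ coincides with ``for all $x \in \int(F)$'' when applied to a single root.

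With that in hand, I would simply split into the three cases for $F(H) \in \{-,0,+\}$ and, in each case, read off membership of $e_H$ and $-e_H$ in $\rootInversionSet(F)$ directly from the defining inequality $\langle x, \cdot \rangle \leq 0$. Concretely: if $F(H) = -$, then $\langle x, e_H \rangle < 0$ and $\langle x, -e_H \rangle > 0$ for $x \in \int(F)$, giving $e_H \in \rootInversionSet(F)$ and $-e_H \notin \rootInversionSet(F)$; if $F(H) = 0$, then $\langle x, \pm e_H \rangle = 0$, so both $e_H$ and $-e_H$ lie in $\rootInversionSet(F)$; if $F(H) = +$, the situation is symmetric to the first case. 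Since the three cases are exhaustive and mutually exclusive, the ``if'' direction of each bullet automatically yields ``only if'' as well.

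For the ``in other words'' reformulation, I would just reorganize the three cases: $e_H \in \rootInversionSet(F)^+$ exactly in cases (i) and (ii), i.e.\ when $F(H) \le 0$, and $-e_H \in \rootInversionSet(F)^-$ exactly in cases (ii) and (iii), i.e.\ when $F(H) \ge 0$. This immediately gives the two displayed equalities.

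There is no real obstacle here: the lemma is a translation between two notations for the same data (the sign of the linear functional $e_H$ on the relative interior of $F$). The only point requiring a sentence of justification is why ``some $x$'' in the definition of $\rootInversionSet(F)$ is equivalent to ``any $x$'' for a fixed root $e_H$, which follows because $\int(F)$ is contained in a single one of $H^+$, $H^-$, or $H$.
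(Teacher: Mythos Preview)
Your proposal is correct and follows essentially the same approach as the paper: both arguments unwind the definitions of $\hat{\sigma}(F)$ and $\rootInversionSet(F)$ via the sign of $\langle x, e_H\rangle$ for $x \in \int(F)$. The paper proves both directions of case~(i) explicitly and declares the others similar, whereas you handle all three cases in the forward direction and then invoke their mutual exclusivity to get the converse for free; this is a cosmetic difference, not a substantive one.
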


\begin{proof}
    We show the first case, the other cases being similar.
    Recall that $e \in \rootInversionSet(F)$ if and only if $\gen{x,e} \leq 0$ for $x \in \int(F)$.
    Furthermore, since $-e \notin \rootInversionSet(F)$ we have~${\gen{x,e} < 0}$.
    By definition of the sign map, since $\gen{x,e} < 0$ we have $\sigma_{H}(x) = -$, \ie $F(H) = -$ as desired.

    Conversely if $F(H) = -$ then $\sigma_{H}(x) = -$ for $x \in \int(F) \subseteq F$.
    Then $\gen{x,e} < 0$ implying that $e \in \rootInversionSet(F)$.
    Furthermore, $\gen{x, -e} > 0$ gives $-e \notin \rootInversionSet(F)$ as desired.
\end{proof}

\begin{corollary}
    \label{cor:faceRoots_contain_pm_root}
    For any $F \in \faces$ and $e \in \Phi_\arrangement$, we have $\rootInversionSet(F) \cap \{e, -e\} \ne \varnothing$.
\end{corollary}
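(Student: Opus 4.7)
The plan is to observe that this corollary is essentially a direct consequence of Lemma 3.11 (lem:roots\_to\_covs), but in fact it admits an even more elementary direct proof from the definition of the root inversion set.

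First I would fix an arbitrary $e \in \Phi_\arrangement$ and an arbitrary point $x \in \int(F)$, which is nonempty since $F$ is a face. The plan is then a simple trichotomy on the sign of $\gen{x,e}$. If $\gen{x, e} \le 0$, the definition of $\rootInversionSet(F)$ gives $e \in \rootInversionSet(F)$ immediately. Otherwise $\gen{x, e} > 0$, which rewrites as $\gen{x, -e} < 0 \le 0$, yielding $-e \in \rootInversionSet(F)$. In both cases $\rootInversionSet(F) \cap \{e, -e\} \ne \varnothing$, as required.

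Alternatively, one can argue by invoking \autoref{lem:roots_to_covs} directly: writing $e = \pm e_H$ for the unique $H \in \arrangement$ with $e \in \{e_H, -e_H\}$, the three cases $F(H) \in \{-, 0, +\}$ enumerated in that lemma each guarantee that at least one of $e_H, -e_H$ lies in $\rootInversionSet(F)$, and $\{e, -e\} = \{e_H, -e_H\}$.

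There is no real obstacle here; the only subtlety worth noting is that the definition of $\rootInversionSet(F)$ quantifies existentially over $x \in \int(F)$, so a single witness $x$ suffices to place a root in the inversion set, and the two conclusions $e \in \rootInversionSet(F)$ and $-e \in \rootInversionSet(F)$ may be witnessed by different points of $\int(F)$ in general. The proof above avoids this concern by producing a witness from any fixed $x \in \int(F)$.
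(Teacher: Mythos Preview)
Your proposal is correct. The paper states this as an immediate corollary of \autoref{lem:roots_to_covs} with no separate proof, so your second argument is exactly the intended one; your first, directly from the definition of $\rootInversionSet(F)$, is an equally valid and slightly more self-contained alternative.
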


Following up \autoref{thm:equivalence1}, we are now ready to show our second main result, providing two more equivalent descriptions of the facial weak order. 
Recall that for~${X \subseteq \Phi_\arrangement}$, we set~$X^+ \coloneqq X \cap \Phi_\arrangement^+$ and~$X^- \coloneqq X \cap \Phi_\arrangement^-$.

\begin{thm}
	\label{thm:equivalence2}
	The following assertions are equivalent for two faces $F, G \in \faces$:
	\begin{enumerate}
        \item[\autoref{item:fwo_covects}] $F \cole G$ in terms of covectors,
        \addtocounter{enumi}{3}
	    \item \label{item:fwo_geo} $\rootInversionSet(F) \setm \rootInversionSet(G) \subseteq \Phi_\arrangement^-$ and $\rootInversionSet(G) \setm \rootInversionSet(F) \subseteq \Phi_\arrangement^+$,
	    \item \label{item:fwo_geo_2} $\rootInversionSet(F)^+ \subseteq \rootInversionSet(G)^+$ and~$\rootInversionSet(F)^- \supseteq \rootInversionSet(G)^-$.
	\end{enumerate}
\end{thm}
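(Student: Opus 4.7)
The plan is to pass through condition (v) as an intermediary: first show that (iv) and (v) are two equivalent ways of expressing the same pair of containments, and then use Lemma \ref{lem:roots_to_covs} to translate (v) into the hyperplane-by-hyperplane covector inequality (iii).

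For (iv) $\Leftrightarrow$ (v), I would just unpack the set differences. Since $\rootInversionSet(F), \rootInversionSet(G) \subseteq \Phi_\arrangement = \Phi_\arrangement^+ \sqcup \Phi_\arrangement^-$, the condition $\rootInversionSet(F) \setm \rootInversionSet(G) \subseteq \Phi_\arrangement^-$ says exactly that every positive root of $\rootInversionSet(F)$ is also in $\rootInversionSet(G)$, i.e.\ $\rootInversionSet(F)^+ \subseteq \rootInversionSet(G)^+$. Symmetrically $\rootInversionSet(G) \setm \rootInversionSet(F) \subseteq \Phi_\arrangement^+$ is the same as $\rootInversionSet(G)^- \subseteq \rootInversionSet(F)^-$. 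So (iv) and (v) are literally the same statement written two ways.

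For (iii) $\Leftrightarrow$ (v), I would use the two identities from Lemma \ref{lem:roots_to_covs}:
\[
\rootInversionSet(F)^+ = \set{e_H}{F(H) \le 0}, \qquad \rootInversionSet(F)^- = \set{-e_H}{F(H) \ge 0},
\]
and the same for $G$. With these, $\rootInversionSet(F)^+ \subseteq \rootInversionSet(G)^+$ translates to the implication ``$F(H) \le 0 \Rightarrow G(H) \le 0$'' for every $H \in \arrangement$, while $\rootInversionSet(G)^- \subseteq \rootInversionSet(F)^-$ translates to ``$G(H) \ge 0 \Rightarrow F(H) \ge 0$''. For the forward direction (iii) $\Rightarrow$ (v), both implications are immediate from $G(H) \le F(H)$ in the order $- < 0 < +$. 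For the converse, I would argue hyperplane-by-hyperplane according to the three possible values of $F(H)$: the cases $F(H) = 0$ and $F(H) = +$ are handled directly by the first implication (the latter being vacuous), and the case $F(H) = -$ uses the second implication in contrapositive form to rule out $G(H) = 0$ and $G(H) = +$, forcing $G(H) = - = F(H)$.

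I do not expect any real obstacle here: the argument is essentially bookkeeping on the three-valued signs, and Lemma \ref{lem:roots_to_covs} has already been done. The only point that requires a touch of care is the $F(H) = -$ case in (v) $\Rightarrow$ (iii), where the first containment on its own only yields $G(H) \le 0$ and one genuinely needs the companion containment on negative roots to sharpen this to $G(H) = -$.
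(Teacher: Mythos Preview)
Your proposal is correct and follows essentially the same approach as the paper: both establish (iv)$\Leftrightarrow$(v) by unpacking the set differences, and then prove (iii)$\Leftrightarrow$(v) hyperplane-by-hyperplane via the identities of Lemma~\ref{lem:roots_to_covs}. The only cosmetic difference is that the paper's case analysis for (v)$\Rightarrow$(iii) splits on the value of $G(H)$ rather than $F(H)$, but the content is the same.
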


\begin{proof}[Proof of \autoref{thm:equivalence2}]
	The points~\autoref{item:fwo_geo} and \autoref{item:fwo_geo_2} are clearly equivalent.
	We thus just need to prove the equivalence between~\autoref{item:fwo_covects} and~\autoref{item:fwo_geo_2}.

	\medskip\noindent
    \boxed{\textbf{\autoref{item:fwo_covects} $\Rightarrow$ \autoref{item:fwo_geo_2}}}
	Assume that~$F \cole G$ so that~$G(H) \le F(H)$ for all~$H \in \arrangement$ by \autoref{thm:equivalence1}.
	Then for any~$H \in \arrangement$, we obtain by \autoref{lem:roots_to_covs} that
	\begin{itemize}
		\item if~$e_H \in \rootInversionSet(F)$, then~$F(H) \le 0$, so that~$G(H) \le 0$, so that~$e_H \in \rootInversionSet(G)$,
		\item if~$-e_H \in \rootInversionSet(G)$, then~$G(H) \ge 0$, so that~$F(H) \ge 0$, so that~$-e_H \in \rootInversionSet(F)$.
	\end{itemize}
	Therefore~$\rootInversionSet(F)^+ \subseteq \rootInversionSet(G)^+$ and~$\rootInversionSet(F)^- \supseteq \rootInversionSet(G)^-$.
	
	\medskip\noindent
    \boxed{\textbf{\autoref{item:fwo_geo_2} $\Rightarrow$ \autoref{item:fwo_covects}}}
	Assume that~$\rootInversionSet(F)^+ \subseteq \rootInversionSet(G)^+$ and~$\rootInversionSet(F)^- \supseteq \rootInversionSet(G)^-$.
	Then for any~$H \in \arrangement$, we obtain by \autoref{lem:roots_to_covs} that
	\begin{itemize}
		\item if~$G(H) = +$, then~$e_H \notin \rootInversionSet(G)$, so that~$e_H \notin \rootInversionSet(F)$, so that $F(H) = +$,
		\item if~$G(H) = 0$, then~$-e_H \notin \rootInversionSet(G)$, so that~$-e_H \notin \rootInversionSet(F)$, so that $F(H) \ge 0$.
	\end{itemize}
	Therefore~$G(H) \le F(H)$ for all~$H \in \arrangement$, so that~$F \cole G$ as desired.
\end{proof}


\subsection{Zonotopes}
\label{subsec:zonotopes}

We conclude this section with an interpretation of the root inversion sets in terms of the geometry of certain polytopes associated to hyperplane arrangements.

Recall that a \definition{polytope} is the convex hull of finitely many points in~$V$, or equivalently a bounded intersection of finitely many half-spaces of~$V$.
The faces of~$P$ are its intersections with its supporting hyperplanes (and the faces~$\varnothing$ and~$P$ itself), and its facets are its codimension~$1$ faces.
For a face~$F$ of a polytope~$P$, the \definition{inner primal cone} of $F$ is the cone $\pricone(F)$ generated by $\set{u - v}{u \in P\text{, } v \in F}$, and the \definition{outer normal cone} of $F$ is the cone $\norcone(F)$ generated by the outer normal vectors of the facets of~$P$ containing~$F$.
Note that these two cones are dual to one another.
The \definition{normal fan} of~$P$ is the complete polyhedral fan formed using the outer normal cones of all faces of~$P$.
See \cite{Ziegler} for more details.

Here, we still consider a normal vector~$e_H$ to each hyperplane~$H \in \arrangement$ such that the base region~$B$ is contained in the positive half-space~$H^+ = \set{v \in V}{\gen{e_H, v} \geq 0}$.
We are interested in the corresponding zonotope defined below.
Details on zonotopes can be found in the book by G.~M.~Ziegler \cite{Ziegler} and in the article by P.~McMullen~\cite{McMullen}.

\begin{defn}
The \definition{zonotope} $\zonotope$ of the arrangement~$\arrangement$ is the convex polytope 
\[
    \zonotope \coloneqq \biggset{\sum_{H \in \arrangement} \lambda_H e_H}{-1 \le \lambda_H \le 1 \text{ for all } H \in \arrangement}.
\]
\end{defn}

\begin{example}
	\label{ex:Coxeter6}
	The zonotope for a Coxeter arrangement is called a \definition{permutahedron}, see~\cite{Hohlweg}.
	We have represented on the left of \autoref{fig:A2Roots} the zonotope of the arrangement of \autoref{ex:Coxeter1} and \autoref{fig:A2}.
	It has $6$ vertices corresponding to the $6$ regions of the arrangement, and $6$ edges corresponding to the $6$ rays of the arrangement.
\end{example}

Note that this zonotope depends upon the choices of the normal vectors~$e_H$ of the hyperplanes~$H \in \arrangement$, but its combinatorics does not.
Namely, P.~H.~Edelman gives in \cite[Lemma 3.1]{Edelman} a bijection between the nonempty faces of the zonotope~$\zonotope$ and the the faces $\faces$ of the arrangement~$\arrangement$ using the $\tau$ map (given in the following lemma) which was first defined by McMullen in \cite[p. 92]{McMullen}.

\begin{lemma}
    \label{lem:bijection_zono_faces_and_arrangement_faces}
    The map~$\tau$ defined by
    \[
        \tau(F) = \biggset{\smashoperator[r]{\sum_{F \not\subseteq H}} F(H) e_H + \smashoperator[r]{\sum_{F \subseteq H}} \lambda_H e_H}{-1 \le \lambda_H \le 1 \text{ for all } F \subseteq H \in \arrangement}
    \]
    is a bijection from the faces~$\faces$ to the nonempty faces of the zonotope~$\zonotope$.
    Moreover,~$F$ is the outer normal cone~$\norcone \big( \tau(F) \big)$ of~$\tau(F)$, so that the fan of the arrangement~$\arrangement$ is the normal fan of~$\zonotope$.
\end{lemma}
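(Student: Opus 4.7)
The plan is to use the description of $\zonotope$ as the Minkowski sum of segments $[-e_H, e_H]$ for $H \in \arrangement$, together with the standard fact that a face of a Minkowski sum selected by a linear functional $\phi$ is the Minkowski sum of the faces of the summands selected by $\phi$. For each segment $[-e_H, e_H]$, the subset maximizing $\phi \mapsto \langle \phi, \cdot \rangle$ is $\mathrm{sign}(\langle \phi, e_H \rangle) \, e_H$ when the pairing is nonzero, and the entire segment when the pairing vanishes.

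Now let $F \in \faces$ and pick any $x \in \int(F)$. By the very definition of the face sign map, $\mathrm{sign}(\langle x, e_H \rangle) = F(H)$ for every $H \in \arrangement$. Moreover, $\langle x, e_H \rangle = 0$ if and only if $x \in H$, and since $x$ lies in the relative interior of $F$, this happens precisely when $F \subseteq H$. Applying the Minkowski sum principle summand by summand, the face of $\zonotope$ maximizing $x$ is
\[
\smashoperator[r]{\sum_{F \not\subseteq H}} F(H)\, e_H \,+\, \smashoperator[r]{\sum_{F \subseteq H}} [-e_H, e_H],
\]
which is exactly $\tau(F)$. This shows simultaneously that $\tau(F)$ is a nonempty face of $\zonotope$ and that $x \in \norcone(\tau(F))$, yielding $\int(F) \subseteq \norcone(\tau(F))$, and therefore $F \subseteq \norcone(\tau(F))$ by closure.

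For the bijectivity, every nonempty face of $\zonotope$ is the arg-max of some functional $\phi \in V$, and letting $F'$ be the unique face of $\arrangement$ with $\phi \in \int(F')$, the computation above gives $\arg\max_{\zonotope} \phi = \tau(F')$, so $\tau$ is surjective. For injectivity, observe that the fixed coefficients $F(H)$ appearing in the formula for $\tau(F)$ (those with $F \not\subseteq H$) determine the covector of $F$ uniquely: indeed, for each $H$ we either read $F(H) \in \{-,+\}$ directly from a fixed coefficient, or else $H$ belongs to the index set of the Minkowski segment factor, which forces $F(H) = 0$. Since covectors determine faces, $\tau(F_1) = \tau(F_2)$ implies $F_1 = F_2$.

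It remains to verify the reverse inclusion $\norcone(\tau(F)) \subseteq F$. Any $\phi \in V$ lies in the relative interior of a unique face $F' \in \faces$, and we just showed $\arg\max_{\zonotope} \phi = \tau(F')$. So $\phi \in \norcone(\tau(F))$ amounts to $\tau(F) \subseteq \tau(F')$. Comparing the defining formulas, this inclusion is equivalent to the two conditions (i) $\{H : F' \subseteq H\} \subseteq \{H : F \subseteq H\}$, and (ii) $F'(H) = F(H)$ whenever $F \not\subseteq H$ and $F' \not\subseteq H$; using \autoref{prop:face_inclusion_on_covectors}, these together say precisely $F' \subseteq F$ as faces. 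The main (and only) technical obstacle is this case analysis, which boils down to checking that the choice of $\lambda_H \in [-1,1]$ in $\tau(F)$ can absorb the fixed sign $F'(H)$ exactly when either $F \subseteq H$ (so $H$ indexes a segment factor) or $F(H) = F'(H)$. Once this is done, $\norcone(\tau(F))$ is the union of $\int(F')$ over all $F' \subseteq F$, which is $F$ itself; and, as this holds for every $F$, the fan of $\arrangement$ coincides with the normal fan of $\zonotope$.
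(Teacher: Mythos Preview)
The paper does not supply its own proof of this lemma; it simply attributes the result to McMullen and to Edelman \cite[Lemma~3.1]{Edelman}. Your argument via the Minkowski decomposition of faces is exactly the standard one and is essentially correct.

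There is one slip worth noting. In the reverse inclusion $\norcone(\tau(F)) \subseteq F$, you correctly reduce to showing that $\tau(F) \subseteq \tau(F')$ forces $F' \subseteq F$. However, your condition (i) is stated backwards: the Minkowski summand of $\tau(F)$ indexed by $H$ is the full segment $[-e_H,e_H]$ precisely when $F \subseteq H$, and for this segment to sit inside the $H$-summand of $\tau(F')$ one needs that summand also to be the full segment, i.e.\ $F' \subseteq H$. Thus the correct condition is $\{H : F \subseteq H\} \subseteq \{H : F' \subseteq H\}$, the reverse of what you wrote. With (i) corrected, conditions (i) and (ii) together say exactly that for every $H$ either $F'(H)=0$ or $F'(H)=F(H)$, which by \autoref{prop:face_inclusion_on_covectors} is $F' \subseteq F$, as you claim. (Your stated (i) and (ii) would instead give $F \subseteq F'$.)

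A related point: both the injectivity argument and the containment argument implicitly use that a face of a Minkowski sum has a \emph{unique} decomposition as a sum of faces of the summands. Without this, ``reading off $F(H)$ from the formula'' is not well-defined when the $e_H$ are linearly dependent. It would strengthen the write-up to invoke this uniqueness explicitly (it is standard; see e.g.\ \cite[Proposition~7.12]{Ziegler}).
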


We now relate the root inversion sets of \autoref{subsec:root_inversion_sets} to the faces of the zonotope~$\zonotope$.

\begin{proposition}
    \label{prop:cone_pricone}
    The cone of the root inversion set $\rootInversionSet(F)$ is the inner primal cone of the face $\tau(F)$ in the zonotope $\zonotope$, \ie
    \[
        \cone \big( \rootInversionSet(F) \big) = \pricone \big( \tau(F) \big)
        \quad\text{and}\quad
        \rootInversionSet(F) = \pricone \big( \tau(F) \big) \cap \Phi_\arrangement.
    \]
\end{proposition}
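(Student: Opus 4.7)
The approach I would take is to derive both equalities from cone polarity combined with the halfspace description of $F$ in terms of its covector. By the preceding \autoref{lem:bijection_zono_faces_and_arrangement_faces}, the outer normal cone $\norcone(\tau(F))$ equals $F$, and the standard polarity between inner primal and outer normal cones of a polytope then gives $\pricone(\tau(F)) = F^\circ$, where $F^\circ \coloneqq \{w \in V : \langle w, n\rangle \le 0 \text{ for all } n \in F\}$ is the polar cone of $F$. This reduces both claims to computing $F^\circ$ and $F^\circ \cap \Phi_\arrangement$.

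Next I would unpack the halfspace description of $F$ from its covector. Since $F = \bigcap_{H \in \arrangement} H^{\hat\sigma_H(F)}$, the cone $F$ is cut out by the inequalities $\langle e_H, n\rangle \ge 0$ when $F(H) = +$, $\langle e_H, n\rangle \le 0$ when $F(H) = -$, and $\langle e_H, n\rangle = 0$ when $F(H) = 0$. Rewriting each of these in the form $\langle a, n\rangle \le 0$, the collected inward normals are exactly
\[
\{-e_H : F(H) = +\} \cup \{e_H : F(H) = -\} \cup \{\pm e_H : F(H) = 0\}.
\]
Polyhedral cone duality then expresses $F^\circ$ as the conical hull of this set. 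Comparing with the description $\rootInversionSet(F)^+ = \{e_H : F(H) \le 0\}$ and $\rootInversionSet(F)^- = \{-e_H : F(H) \ge 0\}$ furnished by \autoref{lem:roots_to_covs}, this generating set is exactly $\rootInversionSet(F)$, which yields the first equality $\cone(\rootInversionSet(F)) = F^\circ = \pricone(\tau(F))$.

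For the second equality the inclusion $\rootInversionSet(F) \subseteq \pricone(\tau(F)) \cap \Phi_\arrangement$ is immediate from the first equality together with $\rootInversionSet(F) \subseteq \Phi_\arrangement$. For the reverse inclusion I would take $e \in F^\circ \cap \Phi_\arrangement$ and pair it with any $n \in \int(F)$: by definition of $F^\circ$ we have $\langle e, n\rangle \le 0$, and since $e = \pm e_H$ for some $H \in \arrangement$ with the sign of $\langle e_H, n\rangle$ equal to $F(H)$, this forces $F(H) \le 0$ when $e = e_H$ and $F(H) \ge 0$ when $e = -e_H$; in either case \autoref{lem:roots_to_covs} places $e$ inside $\rootInversionSet(F)$. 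The only delicate point is keeping the sign conventions straight between inner primal and outer normal cones and faithfully translating the covector data into the H-representation of $F$; once that bookkeeping is fixed, both equalities drop out of expressions already available in the paper.
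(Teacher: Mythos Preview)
Your proof is correct and takes a genuinely different route from the paper's argument. The paper works directly with the explicit formula for $\tau(F)$ from \autoref{lem:bijection_zono_faces_and_arrangement_faces}: writing a general $u \in \zonotope$ and a general $v \in \tau(F)$ as sums $\sum_H \lambda_H e_H$ and analysing the $e_H$-component of $u-v$ hyperplane by hyperplane, it reads off which of $\pm e_H$ lie in $\pricone(\tau(F))$ according to whether $F(H)$ is $+$, $-$, or $0$. Your argument instead never touches the explicit form of $\tau(F)$: you use only that $\norcone(\tau(F)) = F$ together with the standard polarity $\pricone(\tau(F)) = \norcone(\tau(F))^{\circ} = F^{\circ}$, and then compute $F^{\circ}$ by Farkas duality from the $H$-description $F = \bigcap_{H} H^{F(H)}$. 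This is cleaner and more conceptual, and it makes the second equality $\rootInversionSet(F) = \pricone(\tau(F)) \cap \Phi_\arrangement$ essentially automatic via \autoref{lem:roots_to_covs}. The paper's hands-on approach, by contrast, exploits the Minkowski-sum structure of the zonotope and so illustrates more concretely why the zonotope is the right object here; but it leaves the step ``these roots actually generate the cone'' slightly implicit, whereas in your polarity argument that is exactly what Farkas provides.
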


\begin{proof}
    Let $F$ be an arbitrary face in $\faces$ and let $u$ be a point in~$\zonotope$.
    By construction we have $u = \sum_{H \in \arrangement} \lambda_H e_H$ where $\abs{\lambda_H} \leq 1$ for all $H \in \arrangement$.
    Let $v$ be a point in~$\tau(F)$.
    The inner primal cone associated to $F$ in the zonotope $\zonotope$, is~${\pricone \big( \tau(F) \big) = \set{u - v}{u \in \zonotope\text{ and } v \in \tau(F)}}$.
    
    More explicitly, if $H \in \arrangement_F$ then the $e_H$ component of $u - v$ is given by~${(\lambda_H - \lambda'_H) e_H}$ where~${\abs{\lambda_H} \leq 1}$ and $\abs{\lambda'_H} \leq 1$.
    In particular $\pm e_H \in \pricone \big( \tau(F) \big)$.
    If $H \notin \arrangement_F$ then the component of $e_H$ for $u - v$ is given by $(\lambda_H - \mu_H) e_H$ where~${\abs{\lambda_H} \leq 1}$ and~${\mu_H = \pm 1}$.
    Recall from \autoref{lem:bijection_zono_faces_and_arrangement_faces} that $\mu_H = -1$ if $F(H) = -$, etc.
    Suppose~${\mu_H = +1}$, then~${-e_H \in \pricone \big( \tau(F) \big)}$, but $e_H \notin \pricone \big( \tau(F) \big)$.
    Similarly, when~${\mu_H = -1}$, ${e_H \in \pricone \big( \tau(F) \big)}$ and $-e_H \notin \pricone \big( \tau(F) \big)$.
\end{proof}

\begin{example}
	\label{ex:Coxeter7}
    An example of the equality between the cone of the root inversion set with the inner primal cone of the face of the associated zonotope can be seen in \autoref{fig:A2Roots} for the type $A_2$ Coxeter arrangement.
    In \autoref{fig:A2Roots} we have the zonotope~$\zonotope$ on the left and the root inversion set for each face on the right.
    For a face~$F$ of~$\arrangement$, the cone of the root inversion set of $F$ is the same as the inner primal cone of~$\tau(F)$ in~$\zonotope$.
\end{example}


\section{Lattice properties of the facial weak order}
\label{sec:lattice}

It was shown in \cite{DermenjianHohlwegPilaud} that the facial weak order on Coxeter arrangements is a lattice.
The aim of this section is to extend this result to any hyperplane arrangement with a lattice of regions.

\begin{theorem}
	\label{thm:simplicial_is_lattice}
	If $\arrangement$ is an arrangement where $\pr$ is a lattice, then $\fw$ is a lattice.
\end{theorem}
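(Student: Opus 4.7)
The plan is to apply the BEZ lemma, as announced in the introduction. The poset $\fw$ is bounded with minimum $[B,B]$ and maximum $[-B,-B]$ (immediate from \autoref{def:fwo_lower_upper_related}, since $B$ and $-B$ are the extremes of $\pr$), so it suffices to show that any two faces $F,G$ sharing a common lower cover $E$ in $\fw$ admit a join $F\fwjoin G$.

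By \autoref{cor:fwo_covers}, each cover $E\fwcover F$ is of one of two types: an \emph{expansion} ($E\subsetneq F$, $\dim F=\dim E+1$, $M_E=M_F$) or a \emph{contraction} ($F\subsetneq E$, $\dim F=\dim E-1$, $m_E=m_F$). This gives four configurations for a common cover $E$, which reduce to two using the self-duality of $\fw$ obtained by combining the self-duality $R\mapsto -R$ of $\pr$ (\autoref{prop:por_selfdual}) with the face-opposition $F\mapsto -F$ (which exchanges $m_F$ and $-M_F$ and reverses $\fwle$ via \autoref{thm:equivalence1}). I would treat the expansion-expansion case and the mixed expansion-contraction case; contraction-contraction is dual to expansion-expansion, and contraction-expansion matches expansion-contraction up to swapping $F$ and $G$.

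In the expansion-expansion case, both $F$ and $G$ are codimension-one faces of the region $M\coloneqq M_E=M_F=M_G$ containing $E$, and my candidate join is the smallest face $X$ of $M$ whose relative interior meets $F\cup G$; this is well-defined because the face lattice of the polyhedral cone $M$ is itself a lattice. I would then read off $\hat\sigma(X)$ directly (its zero coordinates are exactly those hyperplanes containing both $F$ and $G$, and it agrees with $\hat\sigma(M)$ elsewhere), verify $F\cole X$ and $G\cole X$ using \autoref{thm:equivalence1}, and rule out any smaller upper bound via the root inversion set characterization in \autoref{thm:equivalence2}. In the mixed case, one cover goes up in dimension with $M_E$ fixed and the other goes down with $m_E$ fixed; here the hypothesis that $\pr$ is a lattice enters crucially: the candidate join has maximal region $M_F\prjoin M_G$ and a minimal region lying between $m_F\prjoin m_G$ and $M_F\prjoin M_G$ in $\pr$, and the corresponding covector is obtained by taking the signs forced on $F\cole X$ and $G\cole X$ and filling in the remaining hyperplanes consistently with $M_F\prjoin M_G$.

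The main obstacle is the mixed case: verifying that the prescribed sign pattern is actually realized by a face (\ie lies in $\covectors(\arrangement)$) and that it is the least upper bound and not merely some upper bound. For the former, the oriented matroid elimination axiom from \autoref{defn:oriented_matroid} and the intermediate-covector lemma (\autoref{lem:middle_covector_exists}) allow interpolating between known covectors until a valid candidate is reached; for the latter, one shows that any strictly smaller upper bound $Y$ would force $M_Y\prl M_F\prjoin M_G$, contradicting $F\fwle Y$ and $G\fwle Y$ via \autoref{thm:equivalence2}. A secondary subtlety is that the natural guess $[m_F\prjoin m_G,M_F\prjoin M_G]$ need not be a facial interval, which is precisely why the covector-level construction (rather than just interval arithmetic in $\pr$) is needed.
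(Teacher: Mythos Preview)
Your overall strategy (BEZ lemma plus the case analysis on cover types coming from \autoref{cor:fwo_covers}) matches the paper's. However, your self-duality reduction is a genuine gap. Applying the anti-automorphism $F\mapsto -F$ to the statement ``$F\fwjoin G$ exists whenever $E\fwcover F$ and $E\fwcover G$ are both expansions'' produces ``$(-F)\fwmeet(-G)$ exists whenever $(-F)\fwcover(-E)$ and $(-G)\fwcover(-E)$'', which is a statement about \emph{meets} of two elements sharing a common \emph{upper} cover. That is neither the contraction--contraction join statement you need for BEZ, nor does it combine with the other cases to finish the argument. Self-duality of the poset exchanges joins with meets, not joins of one cover-type with joins of another, so Case~\autoref{it:XY_Z} cannot be skipped. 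The paper treats all three cases; in the contraction--contraction case (\autoref{prop:XY_Z_join}) the join turns out to be $F\cap G$, not a face containing $F\cup G$, and the proof is genuinely different from the expansion--expansion one.

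The paper's central device, absent from your sketch, is localization to a subarrangement (\autoref{subsec:hyperplane_subarrangements}). In each case one passes to $\arrangement_W$ for $W$ the smallest of the three faces; by \autoref{prop:facial_int_are_simplicial} the base region of $\arrangement_W$ is simplicial, and \autoref{prop:look_inside_subarr} lifts the join found there back to $\arrangement$. After localizing, the relevant faces sit on $B$ or $-B$, and \autoref{cor:fwle_B} converts the facial-weak-order comparison into face-lattice inclusion, making the join explicit and the minimality easy. Your expansion--expansion candidate agrees with the paper's once localized, but your claimed covector description (``zero exactly where both $F$ and $G$ are zero'') is only guaranteed because the localized $-B$ is simplicial. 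Your mixed-case plan (guess $M_F\prjoin M_G$ and then interpolate a covector via the elimination axiom) is much vaguer than the paper's construction (\autoref{prop:X_Z_Y_join}): after localizing so that the contraction face is $\{0\}$, the join is simply $-W$, where $W$ is the second ray of the reoriented $2$-face $Y_{-Z}$, and the minimality argument (\autoref{lemma:W-X_is_join}) uses the simpliciality of the localized base region rather than any appeal to $\prjoin$ in the ambient arrangement.
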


In order to prove this result we use the BEZ lemma which provides a local criterion to characterize finite posets which are lattices.

\begin{lemma}[{\cite[Lemma 2.1]{BjornerEdelmanZiegler}}]
    \label{lem:BEZ}
    If $L$ is a finite, bounded poset such that the join~$x \join y$ exists whenever $x$ and $y$ both cover some $z \in L$, then $L$ is a lattice.
\end{lemma}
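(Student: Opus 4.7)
The plan is to establish \autoref{lem:BEZ} by a downward induction on elements of $L$, invoking the hypothesis at exactly one critical step. First observe that it suffices to prove that every pair $a, b \in L$ admits a join: since $L$ is finite with minimum~$\hat 0$, meets then exist automatically, with $a \meet b$ obtained as the join of the non-empty finite set $\set{x \in L}{x \leq a \text{ and } x \leq b}$ of common lower bounds (iterated binary joins reduce arbitrary finite joins to binary ones).

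To show every pair has a join, consider the statement
\[
P(z) : \text{ for all } a, b \in L \text{ with } a, b \geq z, \text{ the join } a \join b \text{ exists in } L.
\]
I would prove $P(z)$ by strong induction working downward on $z$ (equivalently, strong induction on $z$ in $L^{\op}$), so that $P(\hat 0)$ yields the conclusion. The base case $z = \hat 1$ is immediate, since then $a = b = \hat 1$ and the join is $\hat 1$.

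For the inductive step, assume $P(z')$ for every $z' > z$, and fix $a, b \geq z$. The cases $a = z$ or $b = z$ are trivial, so assume $a, b > z$, and pick covers $z \cover a' \leq a$ and $z \cover b' \leq b$, which exist by finiteness of $L$. If $a' = b'$, then $a, b \geq a' > z$ and $P(a')$ directly furnishes $a \join b$. Otherwise $a'$ and $b'$ are distinct covers of $z$, so the hypothesis of the lemma produces $c \coloneqq a' \join b'$. Two successive applications of the induction hypothesis---first $P(a')$ to obtain $d \coloneqq a \join c$, then $P(b')$ to obtain $e \coloneqq b \join d$---yield an element $e$ that I would verify equals $a \join b$: clearly $e \geq a$ (via $d \geq a$) and $e \geq b$, while for any upper bound $u$ of $\{a,b\}$ the chain $u \geq a', b' \;\Rightarrow\; u \geq c \;\Rightarrow\; u \geq d \;\Rightarrow\; u \geq e$, invoking universality of $c$, then $d$, then $e$, shows $e$ is the least.

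The main care-point is orchestrating the downward induction on the correct well-founded order and correctly chaining the three successive joins. The BEZ hypothesis itself enters the proof in exactly one place---to manufacture $c = a' \join b'$ from the two distinct covers of $z$---and all the remaining lifting work from $c$ up to $a \join b$ is carried out by the inductive hypothesis applied twice.
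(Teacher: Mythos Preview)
The paper does not supply its own proof of this lemma; it is quoted verbatim from \cite[Lemma~2.1]{BjornerEdelmanZiegler} and used as a black box to reduce \autoref{thm:simplicial_is_lattice} to \autoref{thm:simplicial_has_join}. There is therefore nothing in the paper to compare your argument against.

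That said, your proof is correct and is essentially the standard one. The reduction from lattice to join-semilattice is valid: once all binary joins exist, the set of common lower bounds of $a,b$ is finite and contains~$\hat 0$, and its iterated join is bounded above by both $a$ and $b$ (since each is already an upper bound of that set), hence furnishes the meet. The downward strong induction establishing $P(z)$ is sound; the only places that merit a second look are the applications of $P(a')$ and $P(b')$ in the inductive step, and you have correctly verified the needed inequalities $a,c \ge a'$ and $b,d \ge b'$ before invoking them. The final chase showing any common upper bound $u$ of $\{a,b\}$ dominates $c$, then $d$, then $e$ is a clean use of the three universal properties in turn.
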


So the proof of \autoref{thm:simplicial_is_lattice} reduces to proving the following statement. 

\begin{theorem}
    \label{thm:simplicial_has_join}
 	Let $\arrangement$ be a hyperplane arrangement where $\pr$ is a lattice and let $X, Y,Z$ be three faces of $\arrangement$.
 	If $Z \fwcover X$ and $Z \fwcover Y$, then the join $X \fwjoin Y$ exists.
\end{theorem}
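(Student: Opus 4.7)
My plan is to prove \autoref{thm:simplicial_has_join} by a case analysis on the two cover types given by \autoref{cor:fwo_covers}. Each of the covers $Z \fwcover X$ and $Z \fwcover Y$ is either of \emph{type (a)}, $Z \subsetneq X$ with $M_Z = M_X$, or of \emph{type (b)}, $X \subsetneq Z$ with $m_Z = m_X$. By \autoref{lem:covector_and_sepSet}, type (a) means the covector of $X$ is that of $Z$ with a single entry switched from $0$ to $-$ at some hyperplane $H_X$; type (b) means a single $+$-entry of $Z$ is zeroed at some $H_X$. This yields four cases, and in each I propose an explicit candidate $W \in \faces$ for the join. The verification that $W = X \fwjoin Y$ is then a componentwise sign check using the covector characterization $F \fwle G \iff G(H) \le F(H)$ for all $H \in \arrangement$ from \autoref{thm:equivalence1}: one shows $W(H) \le X(H)$, $W(H) \le Y(H)$, and that any $F$ with $X, Y \fwle F$ satisfies $F(H) \le W(H)$.

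In Case 1 (both type (a)), take $W \coloneqq X \circ Y$, which is a covector by the oriented matroid composition axiom \autoref{defn:oriented_matroid}\autoref{enum:covec_def_2}; its sign vector computes to the componentwise minimum $\min(X(H), Y(H))$, which automatically dominates every other upper bound in $\cole$. In Cases 3 and 4 (mixed types), take $W$ to be the facet $X \cap H_Y$ of $X$ (respectively $Y \cap H_X$ of $Y$). The cover condition on the type-(a) cover forces $e_{H_Y}$ (respectively $e_{H_X}$) to be linearly independent from the normals of the hyperplanes on which $X$ (resp.\ $Y$) lies, so the intersection is a genuine facet of dimension $\dim Z$, and a direct computation shows its covector coincides with $\min(X(H), Y(H))$.

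Case 2 (both type (b)) is the main obstacle. The naive candidate $X \cap Y$ is always a face, but its covector can carry additional zeros on hyperplanes where $Z$ is nonzero if the face-lattice meet is forced to lie in further hyperplanes of $\arrangement$. When $Z$ has a $-$-entry at such an extra-zero hyperplane, $X \cap Y$ fails the inequality $W(H) \le X(H)$ and hence is not an upper bound of $X$ in $\fw$. This is precisely where the hypothesis that $\pr$ is a lattice intervenes: using the existence of $M_X \prjoin M_Y$ in $\pr$, I would propose as candidate the face whose facial interval is $[m_Z, M_X \prjoin M_Y]$. The key technical step is to verify that this interval is indeed facial in the sense of \autoref{prop:facial_intervals} (which presumably relies on the simpliciality of the base region granted by \autoref{thm:simplicial_implies_lattice} together with a rotation-of-base argument along the lines of \autoref{lem_rotation}). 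Once that is granted, \autoref{lem:covector_and_sepSet} yields the covector of $W$, and the join property follows from the same routine sign comparisons used in the other cases.
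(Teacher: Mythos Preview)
Your Case~1 (both covers of type~(a)) is correct and in fact cleaner than the paper's treatment of the corresponding case: since $X$ and $Y$ are both obtained from $Z$ by turning some $0$'s into $-$'s, the composition $X \circ Y$ coincides with the componentwise minimum and is automatically a covector, hence the join. However, your mixed Cases~3/4 contain a genuine error. First, the description ``a single $+$-entry of $Z$ is zeroed at some $H_X$'' is false: a type-(b) cover can zero several $+$-entries simultaneously (take $Z$ a ray of $B$ and $X = \{0\}$ in any rank~$\ge 2$ arrangement). More seriously, the claim that $Y \cap H_X$ is a facet of $Y$ whose covector equals $\min(X,Y)$ fails. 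Linear independence of $e_{H_X}$ from $\{e_H : Y(H) = 0\}$ only gives $\dim(\vspan(Y) \cap H_X) = \dim Y - 1$; it does \emph{not} force $H_X$ to be a wall of the cone $Y$, so $Y \cap H_X$ can drop to a much smaller face. Concretely, in the $A_2$ arrangement of \autoref{fig:A2Covectors} take $X = \{0\}$, $Z = F_0 = (0,+,+)$, $Y = R_1 = (-,+,+)$: then $\min(X,Y) = (-,0,0)$ is not a covector at all, and the actual join (computed by the paper's construction) is $F_2 = (-,-,0)$, strictly below the componentwise minimum. Neither choice of $H_X \in \{H_2, H_3\}$ makes $Y \cap H_X$ an upper bound of both $X$ and $Y$. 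This is precisely why the paper, in its third case, must localize at $X$ and invoke the simpliciality of the resulting base region (via \autoref{prop:facial_int_are_simplicial}) to produce the ray $-W$; the lattice hypothesis is genuinely needed here, not only in your Case~2.

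Your Case~2 also has the gap you acknowledge: the interval $[m_Z, M_X \prjoin M_Y]$ is not in general facial, and even when the join exists it equals $X \cap Y$, whose maximal region $M_{X \cap Y}$ typically strictly exceeds $M_X \prjoin M_Y$ (already in $A_3$ with $Z = B$ and $X,Y$ two facets of $B$). The paper's route---localize at the smallest common subface, then exploit that the base region of the localization is simplicial---is what makes all three cases go through, and your covector-only approach does not bypass it outside of Case~1.
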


The proof of this theorem is the aim of the next two sections.
The idea of the proof is as follows.
We first consider our cover relations $Z \fwcover X$ and $Z \fwcover Y$.
We know from \autoref{cor:fwo_covers} that this is equivalent to $\abs{\dim Z - \dim X} = 1$, $Z \fwle X$, and either $Z \subseteq X$ or $X \subseteq Z$ and similarly for $Y$. 
By symmetry of~$X$ and~$Y$, we thus obtain the following three cases:
\begin{enumerate}[label=(\arabic*),ref=\arabic*]
    \item \label{it:XY_Z} $X \cup Y \subseteq Z$ and $\dim X = \dim Y = \dim Z - 1$,
    \item \label{it:Z_XY} $Z \subseteq X \cap Y$ and $\dim X = \dim Y = \dim Z + 1$, and
    \item \label{it:X_Z_Y} $X \subseteq Z \subseteq Y$ and $\dim X + 1 = \dim Y - 1 = \dim Z$.
\end{enumerate}
In each case we consider the subarrangement associated to the largest face contained in all three faces.
Namely, the subarrangement~${\arrangement_{X \cap Y} = \set{H \in \arrangement}{X \cap Y \subseteq H}}$ for case \autoref{it:XY_Z}, the subarrangement~${\arrangement_Z = \set{H \in \arrangement}{Z \subseteq H}}$  for case \autoref{it:Z_XY} and the subarrangement~${\arrangement_X = \set{H \in \arrangement}{X \subseteq H}}$ for case \autoref{it:X_Z_Y} .

In the next subsection we show that the join in the poset of regions of a subarrangement can be extended to a join in the poset of regions of the arrangement itself.
Finally, for each case we find the join inside the appropriate subarrangement, culminating in the proof of \autoref{thm:simplicial_has_join}.

Before we begin, we give a conjecture stating that the converse of \autoref{thm:simplicial_is_lattice} is true as well.

\begin{conjecture}
    \label{conj:lattice_iff_lattice}
    For any hyperplane arrangement~$\arrangement$ and any base region~$B$ of~$\arrangement$, the poset of regions~$\pr$ is a lattice if and only if the facial weak order~$\fw$ is a lattice.
\end{conjecture}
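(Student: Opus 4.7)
The plan is to assume $\fw$ is a lattice and show that $\pr$ is a lattice. The strategy exploits Remark~2.8, which identifies $\pr$ with the subposet of $\fw$ consisting of singleton facial intervals $[R,R]$ for $R \in \regions$. The key claim to establish is that the join (respectively, meet) in $\fw$ of two region-singletons is again a region-singleton, so that the lattice operations on $\fw$ restrict to lattice operations on $\pr$.

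The crucial verification proceeds as follows. Given regions $R, R'$, let $J = [R,R] \fwjoin [R',R']$ in $\fw$ and write its facial interval as $[m_J, M_J]$. Unpacking Definition~2.6 shows $R \prle m_J$ and $R' \prle m_J$, so $m_J$ is already an upper bound of $R, R'$ in $\pr$. I would then use the singleton $[m_J, m_J] \in \fw$---a valid element since $m_J$ is itself a codimension-$0$ face whose facial interval reduces to itself---as a test upper bound of $[R,R]$ and $[R',R']$ in $\fw$. Minimality of $J$ forces $J \fwle [m_J, m_J]$, and a second application of Definition~2.6 yields $M_J \prle m_J$, collapsing the facial interval to $m_J = M_J$. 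The resulting region $S = m_J$ is then easily identified as the join of $R, R'$ in $\pr$: any upper bound $T$ of $R, R'$ in $\pr$ gives a singleton $[T,T] \in \fw$ above both $[R,R]$ and $[R',R']$, hence above $J = [S,S]$, forcing $S \prle T$.

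A symmetric argument handles meets. Combined with the facts that $\pr$ is finite and has minimum $B$ and maximum $-B$, this would establish that $\pr$ is a lattice. \emph{I do not anticipate a substantive obstacle}: the argument is essentially formal, and the key step---that minimality of the join forces $m_J = M_J$---follows immediately from considering $[m_J, m_J]$ as a test upper bound. My suspicion is that the conjecture is posed rather than proved in the paper simply because the authors did not require the converse of Theorem~4.1 for their main results, not because of genuine difficulty. A worthwhile sanity check would be to verify the argument still works in the more general simple simplicial oriented matroid setting mentioned in the introduction, where the authors claim their methods should extend; a more subtle issue to double-check is whether some degeneration (e.g.\ non-essentialness, which the paper explicitly rules out) could cause the identification of $m_J$ with a codimension-$0$ face to fail.
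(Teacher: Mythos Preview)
The statement in question is labeled as a \emph{conjecture} in the paper, not a theorem; the paper offers no proof, only posing it immediately after Theorem~4.1 as the expected converse. There is therefore no paper proof against which to compare.

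Your argument is correct and in fact resolves the conjecture. The key step---that if $J$ is the $\fw$-join of two regions $R,R'$ then the region $m_J$ already lies above both $[R,R]$ and $[R',R']$ in $\fw$, forcing $J \fwle [m_J,m_J]$ and hence $M_J \prle m_J$---is sound; every ingredient follows directly from Definition~2.6 together with the fact (noted after Proposition~2.4) that a region's facial interval is the singleton $[R,R]$. The dual argument for meets is genuinely symmetric, and since $\pr$ is finite with minimum $B$ and maximum $-B$, this suffices for it to be a lattice. Your worry about non-essentialness is harmless: $m_J$ is a region by Proposition~2.4 regardless.

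Note that your argument actually proves more than the conjecture: whenever $\fw$ is a lattice, the regions form a \emph{sublattice} of $\fw$, since their $\fw$-joins and $\fw$-meets are again regions. This strengthens Proposition~4.14, which the paper establishes only for simplicial arrangements via a separate geometric argument. It appears the authors simply overlooked this short order-theoretic proof.
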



\subsection{Joins and subarrangements of faces}
\label{subsec:hyperplane_subarrangements}

A \definition{subarrangement} of an arrangement~$\arrangement$ is a subset~$\arrangement'$ of~$\arrangement$.
There is a natural map~$\faces \to \faces[\arrangement']$ that projects each face $G$ in~$\faces$ to the smallest face $G_{\arrangement'}$ in $\faces[\arrangement']$ such that the relative interior of $G$ is contained in the relative interior of $G_{\arrangement'}$, \ie for $H \in \arrangement'$ then $G_{\arrangement'}(H) = G(H)$.
Note that this map is surjective and preserves the facial weak order: if $F \fwle G$ in~$\arrangement$, then~$F_{\arrangement'} \fwle G_{\arrangement'}$ in~$\arrangement'$.

We particularly focus on the following special subarrangements.
For a face~${F \in \faces}$, let $\arrangement_F \coloneqq \set{H \in \arrangement}{F \subseteq H}$ be the subarrangement of $\arrangement$ with all hyperplanes which contain $F$.
This subarrangement~$\arrangement_F$ is known as the \definition{support} of $F$ or the \definition{localization} of $\arrangement$ to $F$.
We denote by~$\pi_F$ the projection map ${\faces \to \faces[\arrangement_F]}$ described above in this specific case, and we often use the shorthand~$G_F$ for~${\pi_F(G) = G_{\arrangement_F}}$.
Note that the surjection~$\pi_F$ restricts to a bijection between $\set{G \in \faces}{F \subseteq G}$ and~$\faces[\arrangement_F]$.

\begin{example}
	\label{ex:Coxeter8}
    \autoref{fig:A2Map} gives an example of these maps for the subarrangement~$\arrangement_{F_1}$ of the type~$A_2$ arrangement discussed in \autoref{ex:Coxeter1}.
    Since $H_2$ is the only hyperplane containing~$F_1$, our subarrangement contains one hyperplane ${\arrangement_{F_1} = \{H_2\}}$.
    Then $\pi_{F_1}: \faces \to \faces[\arrangement_{F_1}]$ is the map with the following equalities (some of which are shown in the figure, but not all).
    \begin{align*}
        \pi_{F_1}(R_2) &= \pi_{F_1}(F_2) = \pi_{F_1}(R_3) = \pi_{F_1}(F_3) = \pi_{F_1}(R_4)\\
        \pi_{F_1}(F_1) &= \pi_{F_1}(0) = \pi_{F_1}(F_4)\\
        \pi_{F_1}(R_1) &= \pi_{F_1}(F_0) = \pi_{F_1}(B) = \pi_{F_1}(F_5) = \pi_{F_1}(R_5)
    \end{align*}
    It can be seen in the figure that $\pi_{F_1}$ is a bijection from $\{R_1,R_2,F_1\}$ to~$\arrangement_{F_1}$.
	\begin{figure}[b]
\DeclareDocumentCommand{\rs}{ O{1.1cm} O{->} m m O{0} O{0}} {
	\def \radius {#1}
	\def \inputPoints{#3}
	\def \excludeRoots{#4}
	\def \style {#2}
	\def \initialRotation {#5}
	\def \extraStyle {#6}

	\pgfmathtruncatemacro{\points}{\inputPoints * 2}
	\pgfmathsetmacro{\degrees}{360 / \points}
	
	\coordinate (0) at (0,0);
	
	\foreach \x in {1,...,\points}{%
		\pgfmathsetmacro{\location}{(\points+(\x-1))*\degrees + \initialRotation}
		
		\coordinate (\x) at (\location:\radius);
	}

	\ifthenelse{\equal{\excludeRoots}{}}{
		\foreach \x in {1,...,\points}{%
			\ifthenelse{\equal{\extraStyle}{0}}{
				\draw[\style] (0) -- (\x);
			}{
			\draw[\style, \extraStyle] (0) -- (\x);
		}
	}
}{
\foreach \x in {1,...,\points}{%
	\edef \showPoint {1};
	
	\foreach \y in \excludeRoots {
		\ifthenelse{\equal{\x}{\y}}{
			\xdef \showPoint {0};
		}{}
	}
	
	\ifthenelse{\equal{\showPoint}{1}}{
		\ifthenelse{\equal{\extraStyle}{0}}{
			\draw[\style] (0) -- (\x);
		}{
		\draw[\style, \extraStyle] (0) -- (\x);
	}
}{}
}
}  
}

\centerline{
    \footnotesize
    \begin{tikzpicture}
	    [
            scale=0.6,
	    fdomain/.style={fill=blue!15!white,color=blue!15!white,opacity=0.3},
	    vdomain/.style={color=blue!85!white},
	    face/.style={draw=red!95!black,fill=red!95!black,color=red!95!black,ultra thick, -},
        vface/.style={color=red!95!black},
	    norm/.style={black,->},
	    vertex/.style={inner sep=1pt,circle,draw=green!85!white,fill=green!85!white,thick},
	    ]
        \begin{scope}[shift={(0,0)}]
    	    %
    	    \rs[3.5][ultra thick]{3}{}[60]
    	    \node[above right] at (1) {$H_{3}$};
    	    \node[above left] at (2) {$H_{1}$};
    	    \node[left] at (3) {$H_{2}$};
    	    \rs[3][dashed]{3}{}[240]
    	    \draw[face] (0) -- (2) node[right] {$\mathbf{F_0}$};
    	    \draw[face] (0) -- (3) node[above] {$\mathbf{F_1}$};
    	    \draw[face] (0) -- (4) node[right] {$\mathbf{F_2}$};
    	    \draw[face] (0) -- (5) node[left] {$\mathbf{F_3}$};
    	    \draw[face] (0) -- (6) node[above] {$\mathbf{F_4}$};
    	    \draw[face] (0) -- (1) node[left] {$\mathbf{F_5}$};
    	    %
    	    %
    	    \fill[fdomain] (0) -- (1) -- (2) -- cycle {};
    	    \fill[fdomain] (0) -- (3) -- (2) -- cycle {};
    	    \fill[fdomain] (0) -- (3) -- (4) -- cycle {};
    	    \fill[fdomain] (0) -- (5) -- (4) -- cycle {};
    	    \fill[fdomain] (0) -- (5) -- (6) -- cycle {};
    	    \fill[fdomain] (0) -- (1) -- (6) -- cycle {};
    	    \node[vdomain] at (270:2.2) {$B$};
    	    \node[vdomain] at (330:2.2) {$R_1$};
    	    \node[vdomain] at (30:2.2) {$R_2$};
    	    \node[vdomain] at (90:2.2) {$R_3$};
    	    \node[vdomain] at (150:2.2) {$R_4$};
    	    \node[vdomain] at (210:2.2) {$R_5$};
    	    \node[vertex] at (0) {};
        \end{scope}
        \begin{scope}[shift={(8,0)}]
    	    %
            \draw[ultra thick] (0:3.5) -- (180:3.5);
            \node[right] at (0:3.5) {$H_{2}$};
            \draw[face] (180:3) -- (0:3);
            \node[vface,above] at (0,0) {$\pi_{F_1}(0)$};
    	    %
            \draw[fdomain] (0,0) circle (3); 
            \node[vdomain] at (270:2.2) {$\pi_{F_1}(B) = \pi_{F_1}(F_0)$};
            \node[vdomain] at (90:2.2) {$\pi_{F_1}(R_3) = \pi_{F_1}(F_3)$};
        \end{scope}
    \end{tikzpicture}
}
	    \caption{The map $\pi_{F_1}$ from an arrangement $\arrangement$ to a subarrangement $\arrangement_{F_1}$. See \autoref{ex:Coxeter8}.}
		\label{fig:A2Map}
	\end{figure}
\end{example}

Given an arrangement $\arrangement$ whose poset of regions $\pr$ is a lattice, it is not necessary that any arbitrary subarrangement will also have a lattice of regions.
However, when the subarrangement is associated to a face, then the lattice property of the poset of regions is preserved through facial intervals.
This follows from the well-known fact that an interval of a lattice is a lattice.
This lattice property, combined with the fact that the base region of a lattice of regions is always simplicial (see \cite[Theorem 3.1 and 3.4]{Edelman}) gives the following proposition.

\begin{proposition}
    \label{prop:facial_int_are_simplicial}
    Let $\arrangement$ be an arrangement whose poset of regions $\pr$ is a lattice.
    For a face $F \in \faces$ the subarrangement $\arrangement_F$ is a central subarrangement and~$\pr[\arrangement_F][B_F]$ is a lattice of regions with simplicial base region~$B_F$.
\end{proposition}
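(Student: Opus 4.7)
The plan is to reduce the proposition to applying Theorem \ref{thm:simplicial_implies_lattice} by establishing that $\pr[\arrangement_F][B_F]$ is isomorphic, as a poset, to the facial interval $[m_F, M_F]$ in $\pr$. Centrality of $\arrangement_F$ is immediate: every hyperplane of $\arrangement$ passes through the origin, hence so does every hyperplane of the subset $\arrangement_F$. For the remaining content, I will build the isomorphism and then apply two known inputs: that any interval of a lattice is a lattice, and that the base region of a lattice of regions is simplicial.

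The key claim is that the map $R \mapsto \pi_F(R)$ is a poset isomorphism from $[m_F, M_F] \subseteq \pr$ to $\pr[\arrangement_F][B_F]$. Bijectivity on regions follows from the paragraph preceding \autoref{ex:Coxeter8}: $\pi_F$ restricts to a bijection between faces of $\arrangement$ containing $F$ and faces of $\arrangement_F$, and since this bijection preserves codimension relative to $F$, it sends the codimension-$0$ elements of $\{G \in \faces \mid F \subseteq G\} = [m_F, M_F]$ (by \autoref{prop:facial_intervals}) to the regions of $\arrangement_F$. For the order, I would verify the identity
\[
    S_{\arrangement_F}(B_F, \pi_F(R)) \;=\; \arrangement_F \cap S_\arrangement(B, R) \;=\; S_\arrangement(m_F, R)
\]
for every $R \in [m_F, M_F]$. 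The first equality holds because a hyperplane in $\arrangement_F$ separates $\pi_F(R)$ from $B_F$ exactly when it separates $R$ from $B$, since $B \subseteq B_F$ and $R$ has $\pi_F(R)$ as the unique region of $\arrangement_F$ containing its relative interior. The second equality is the main subtlety: no hyperplane of $\arrangement_F$ can lie in $S_\arrangement(B, m_F)$, for such a hyperplane would yield a region containing $F$ strictly $\prle$-below $m_F$, contradicting minimality of $m_F$; combined with $S_\arrangement(B, m_F) \subseteq S_\arrangement(B, R)$, this gives the disjoint decomposition $S_\arrangement(B, R) = S_\arrangement(B, m_F) \sqcup S_\arrangement(m_F, R)$ with the second piece contained in $\arrangement_F$. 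The poset isomorphism then follows at once, because $R \prle R'$ in $[m_F, M_F]$ is equivalent to $S_\arrangement(m_F, R) \subseteq S_\arrangement(m_F, R')$, which in turn matches the order on $\pr[\arrangement_F][B_F]$ under $\pi_F$.

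Given the isomorphism, the two conclusions drop out. Since $\pr$ is a lattice, the interval $[m_F, M_F]$ is a lattice, hence so is $\pr[\arrangement_F][B_F]$. Then Theorem \ref{thm:simplicial_implies_lattice} applied to $\arrangement_F$ (after essentializing if needed, which does not change simpliciality of the base region) yields that $B_F$ is simplicial. The main obstacle is the identification $S_{\arrangement_F}(B_F, \pi_F(R)) = S_\arrangement(m_F, R)$; everything else is bookkeeping once the right picture of $\pi_F$ as a ``local'' projection around $F$ is in hand.
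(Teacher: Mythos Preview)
Your proposal is correct and follows essentially the same approach as the paper: the paper's argument is the one-sentence sketch preceding the proposition (an interval of a lattice is a lattice, and the base region of any lattice of regions is simplicial by \autoref{thm:simplicial_implies_lattice}), and you have simply made explicit the underlying poset isomorphism $[m_F,M_F]\cong\pr[\arrangement_F][B_F]$ that the paper leaves implicit. Your handling of the essentiality hypothesis in \autoref{thm:simplicial_implies_lattice} is also correct and worth noting, since $\arrangement_F$ is not essential when $\dim F>0$.
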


\begin{lemma}
	\label{lem:localization}
    For any three faces $X, Y, Z \in \faces$ such that $[X,Y]$ is an interval in~$\fw$ and $Z \subseteq X \cap Y$, then the interval $[X,Y]$ in $\fw$ is isomorphic to~$[X_{Z},Y_{Z}]$ in $\fw[\arrangement_{Z}][B_{Z}]$.
\end{lemma}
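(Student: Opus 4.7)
My plan is to prove \autoref{lem:localization} by showing that the projection $\pi_{Z}\colon \faces \to \faces[\arrangement_{Z}]$ restricts to an order isomorphism from $[X,Y]$ onto $[X_{Z},Y_{Z}]$. Since $\pi_{Z}$ is a bijection between the faces of~$\arrangement$ containing~$Z$ and all of~$\faces[\arrangement_{Z}]$ (as recalled in \autoref{subsec:hyperplane_subarrangements}), the first and crucial step is to show that every $G \in [X,Y]$ actually contains~$Z$.

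For this I would combine \autoref{prop:face_inclusion_on_covectors} with the covector description of the facial weak order from \autoref{thm:equivalence1}. Since $X \fwle G \fwle Y$, we have $Y(H) \le G(H) \le X(H)$ for every $H \in \arrangement$. For $H \in \arrangement_{Z}$ we have $Z(H) = 0$, so the covector criterion for $Z \subseteq G$ is automatic. For $H \notin \arrangement_{Z}$ we have $Z(H) \ne 0$, and the hypothesis $Z \subseteq X \cap Y$ forces $X(H) = Z(H) = Y(H)$ by \autoref{prop:face_inclusion_on_covectors}; the squeeze then yields $G(H) = Z(H)$, completing the criterion.

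With this in hand, the remaining verifications are routine bookkeeping on covectors. Restricting the inequalities defining $X \fwle G \fwle Y$ to $H \in \arrangement_{Z}$ gives $X_{Z} \fwle G_{Z} \fwle Y_{Z}$ in~$\fw[\arrangement_{Z}][B_{Z}]$, so $\pi_{Z}$ lands in $[X_{Z}, Y_{Z}]$. Conversely, given any $G' \in [X_{Z}, Y_{Z}]$, the bijection produces a unique $G \supseteq Z$ with $G_{Z} = G'$; on $\arrangement_{Z}$ the covector inequalities come from $G'$, while on $\arrangement \setm \arrangement_{Z}$ we have $G(H) = Z(H) = X(H) = Y(H)$, so $X \fwle G \fwle Y$. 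For the isomorphism itself, any two faces $G, G' \in [X,Y]$ share the same covector outside~$\arrangement_{Z}$ (both equal to~$Z$ there), so $G \fwle G'$ and $G_{Z} \fwle G'_{Z}$ are trivially equivalent.

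A minor prerequisite I would check up front is that the base regions $B$ and $B_{Z}$ are compatibly oriented: since $B \subseteq B_{Z}$ and $B$ lies in every $H^{+}$, so does $B_{Z}$, hence the same choice of normal vectors $e_{H}$ serves both arrangements and the orders $\cole$ are consistent on hyperplanes of $\arrangement_Z$. The only nontrivial content is the squeeze argument establishing $Z \subseteq G$; everything else is direct manipulation with covectors.
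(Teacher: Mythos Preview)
Your proposal is correct and follows essentially the same approach as the paper: the paper's proof also establishes that every $W \in [X,Y]$ contains $Z$ via the identical squeeze argument on covectors (using $Z(H) = X(H) = Y(H)$ for $H \notin \arrangement_Z$ together with $Y(H) \le W(H) \le X(H)$), and then invokes the bijection between faces containing $Z$ and $\faces[\arrangement_Z]$ with the inverse given by extending covectors by $Z(H)$ outside $\arrangement_Z$. Your write-up is somewhat more explicit about the order-preservation in both directions and the compatibility of base regions, but the mathematical content is the same.
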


\begin{proof}
    We first prove that the map $W \mapsto W_{Z}$ defines an injective order preserving map from $[X,Y]$ to $[X_{Z},Y_{Z}]$.
    Let $W \in [X,Y]$.
    We aim to show $Z$ is a face of $W$.
    By \autoref{prop:face_inclusion_on_covectors} it suffices to show $Z(H) = W(H)$ when $Z(H) \neq 0$.
    Suppose that there is $H \in \arrangement$ such that $Z(H) \ne W(H)$ and $Z(H) \ne 0$.
    This implies ${Z(H) = X(H) = Y(H)}$.
    As $W \in [X,Y]$, then ${Y(H) \leq W(H) \leq X(H)}$.
    Hence $W(H) = X(H) = Z(H)$, a contradiction.
    Therefore, $Z$ is a face of $W$.
    Thus, the localization map ${[X,Y]\rightarrow[X_{Z},Y_{Z}]}$ is injective.

    The inverse map $[X_{Z},Y_{Z}]\rightarrow[X,Y]$ is defined by extending $W_{Z}$ to a covector $W$ with $W(H)=Z(H)$ for $H\in\arrangement\setm\arrangement_{Z}$.
    As this map is also order preserving, the proof is complete.
\end{proof}

This lemma gives us another way to view facial intervals as the faces of a subarrangement.
With the above lemma, given a facial interval $[m_F, M_F]$ for a face $F$, then $m_F$ (resp.~$M_F$) is the region in $\arrangement$ associated to the base region ${B}_F$   (resp.~to its opposite region $-{B}_F$) in $\faces[\arrangement_F]$.
We now show that a join in the poset of regions of a subarrangement extends to a join in the poset of regions of the arrangement itself.
The following is possible by \autoref{prop:facial_int_are_simplicial}.

\begin{proposition}
	\label{prop:look_inside_subarr}
    For any three faces~$X, Y, Z \in \faces$ such that $Z \subseteq X \cap Y$, if there exists a face~$W$ containing~$Z$ such that $W_Z = X_Z \fwjoin Y_Z$ in $\fw[\arrangement_Z][B_Z]$ then~${W = X \fwjoin Y}$ in $\fw$.
\end{proposition}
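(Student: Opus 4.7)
My plan is to use the covector characterization of $\fwle$ from \autoref{thm:equivalence1} and to exploit two properties of the localization map $\pi_Z \colon \faces \to \faces[\arrangement_Z]$: it is order-preserving with respect to the facial weak order, and it restricts to a bijection between faces of $\arrangement$ containing $Z$ and faces of $\arrangement_Z$. The key preliminary observation is that since $X$, $Y$, and $W$ all contain $Z$, \autoref{prop:face_inclusion_on_covectors} gives $X(H) = Y(H) = W(H) = Z(H) \ne 0$ for every $H \in \arrangement \setm \arrangement_Z$, so the comparison $\cole$ among these three faces is entirely controlled by their restrictions to $\arrangement_Z$, which coincide with the covectors of $X_Z$, $Y_Z$, $W_Z$ in the subarrangement.

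First I would verify that $W$ is an upper bound of $X$ and $Y$ in $\fw$. For $H \in \arrangement_Z$, the inequality $W(H) \le X(H)$ follows from $W_Z \fwge X_Z$ in $\fw[\arrangement_Z][B_Z]$, noting that the orientation of $e_H$ is consistent between $B$ and $B_Z$ because $B \subseteq B_Z \subseteq H^+$, so the sign convention $+$ has the same meaning in both settings. For $H \notin \arrangement_Z$, equality $W(H) = X(H)$ is the common value $Z(H)$ from the preliminary observation. The same argument with $Y$ in place of $X$ yields $W \fwge Y$.

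Next, for minimality, I would take any $V \in \faces$ with $X \fwle V$ and $Y \fwle V$ and show $W \fwle V$. Applying the order-preserving projection $\pi_Z$ produces $X_Z \fwle V_Z$ and $Y_Z \fwle V_Z$ in $\fw[\arrangement_Z][B_Z]$, so the join property of $W_Z$ gives $W_Z \fwle V_Z$, i.e.\ $V(H) \le W(H)$ for every $H \in \arrangement_Z$. For $H \notin \arrangement_Z$, the chain $V(H) \le X(H) = Z(H) = W(H)$ uses $X \fwle V$ together with the preliminary observation. Combining the two cases, $V(H) \le W(H)$ on all of $\arrangement$, which is precisely $W \fwle V$ by \autoref{thm:equivalence1}.

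The main obstacle is really just bookkeeping: one must check that the sign conventions transfer correctly between $\fw$ and $\fw[\arrangement_Z][B_Z]$, which reduces to the observation $B \subseteq B_Z$ combined with the convexity of $H^+$. Notably, this proposition itself does not directly require that $\pr$ be a lattice; that hypothesis enters only upstream, via \autoref{prop:facial_int_are_simplicial} and \autoref{lem:localization}, to guarantee that the join $X_Z \fwjoin Y_Z$ in the subarrangement exists in the first place.
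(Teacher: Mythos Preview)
Your proposal is correct and follows essentially the same approach as the paper: use the covector description of $\fwle$, split into $H\in\arrangement_Z$ versus $H\notin\arrangement_Z$, handle the former via the order-preserving projection $\pi_Z$ and the join hypothesis in the subarrangement, and handle the latter via \autoref{prop:face_inclusion_on_covectors} applied to the common subface $Z$. Your write-up is in fact slightly more complete than the paper's, which jumps directly to the minimality step and leaves the verification that $X\fwle W$ and $Y\fwle W$ implicit; your explicit remark on the consistency of sign conventions between $B$ and $B_Z$ is also a point the paper uses silently.
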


\begin{proof}
    Suppose $U \in \faces$ is a face such that $X \fwle U$ and $Y \fwle U$.
    Since the projection map $\pi_F: \faces \to \faces[\arrangement_F]$ preserves the facial weak order, we have that~${W_Z = X_Z \fwjoin Y_Z \fwle U_Z}$ in the facial weak order of the subarrangement.
	In other words, for all $H \in \arrangement_Z$, we have $U_Z(H) \leq W_Z(H)$, and thus $U(H) \leq W(H)$.
	
	Next let $H'$ be a hyperplane in $\arrangement \setm \arrangement_Z$.
    Since $H' \notin \arrangement_Z$, we have~${Z(H') \ne 0}$.
    Furthermore, by \autoref{prop:face_inclusion_on_covectors}, $0 \ne Z(H') = X(H') = Y(H') = W(H')$ since~${Z \subseteq X \cap Y}$ and $Z \subseteq W$.
    Then, since $X \fwle U$, we have~${U(H') \leq X(H') = W(H')}$.

	In other words, $U(H) \leq W(H)$ for all $H \in \arrangement$.
    Therefore $W \fwle U$ implying~${W =  X \fwjoin Y}$.
\end{proof}


\subsection{Joins in subarrangements}

As discussed, we now describe the three distinct cases that arise using the cover relations of the facial weak order.
Then, for each case, we restrict ourselves to the subarrangement associated to the largest face contained in all three faces and find the join in the subarrangement.
Combining these results with \autoref{prop:look_inside_subarr} proves \autoref{thm:simplicial_has_join}.

Consider three faces~$X, Y, Z \in \faces$ such that~$Z \fwcover X$ and $Z \fwcover Y$.
Recall that by \autoref{cor:fwo_covers}, we have $Z \fwcover X$ if and only if $\abs{\dim Z - \dim X} = 1$, $Z \fwle X$, and either $Z \subseteq X$ or $X \subseteq Z$, and similarly for $Y$.
By symmetry on~$X$ and~$Y$, this gives us three different cases:
\begin{enumerate}[label=(\arabic*)]
	\item $X \cup Y \subseteq Z$ and $\dim X = \dim Y = \dim Z - 1$,
	\item $Z \subseteq X \cap Y$ and $\dim X = \dim Y = \dim Z + 1$, and
	\item $X \subseteq Z \subseteq Y$ and $\dim X +1 = \dim Y - 1 = \dim Z$.
\end{enumerate}
We now look at each case individually.
We have broken down their proofs into three subsections to better facilitate their reading.
We let $\boundary(R)$ denote the set of boundary hyperplanes of a region $R$.


\subsubsection{First case: $X \cup Y \subseteq Z$ and $\dim X = \dim Y = \dim Z - 1$}
\label{subsubsec:first_case}

\begin{figure}[b]
\DeclareDocumentCommand{\rs}{ O{1.1cm} O{->} m m O{0} O{0}} {
	\def \radius {#1}
	\def \inputPoints{#3}
	\def \excludeRoots{#4}
	\def \style {#2}
	\def \initialRotation {#5}
	\def \extraStyle {#6}

	\pgfmathtruncatemacro{\points}{\inputPoints * 2}
	\pgfmathsetmacro{\degrees}{360 / \points}
	
	\coordinate (0) at (0,0);
	
	\foreach \x in {1,...,\points}{%
		\pgfmathsetmacro{\location}{(\points+(\x-1))*\degrees + \initialRotation}
		
		\coordinate (\x) at (\location:\radius);
	}

	\ifthenelse{\equal{\excludeRoots}{}}{
		\foreach \x in {1,...,\points}{%
			\ifthenelse{\equal{\extraStyle}{0}}{
				\draw[\style] (0) -- (\x);
			}{
			\draw[\style, \extraStyle] (0) -- (\x);
		}
	}
}{
\foreach \x in {1,...,\points}{%
	\edef \showPoint {1};
	
	\foreach \y in \excludeRoots {
		\ifthenelse{\equal{\x}{\y}}{
			\xdef \showPoint {0};
		}{}
	}
	
	\ifthenelse{\equal{\showPoint}{1}}{
		\ifthenelse{\equal{\extraStyle}{0}}{
			\draw[\style] (0) -- (\x);
		}{
		\draw[\style, \extraStyle] (0) -- (\x);
	}
}{}
}
}  
}

\centerline{
    \begin{tikzpicture}
	    [
	    fdomain/.style={fill=blue!15!white,color=blue!15!white,opacity=0.3},
	    vdomain/.style={color=blue!85!white},
	    face/.style={draw=red!95!black,fill=red!95!black,color=red!95!black,ultra thick, -},
	    vertex/.style={inner sep=1pt,circle,draw=green!85!white,fill=green!85!white,thick},
	    vtext/.style={color=green!55!black},
	    ]
	    %
	    %
	    %
        \rs[3.5][ultra thick]{4}{}[22.5]
	    \node[above left] at (2) {$H_{2}$};
	    \node[above right] at (3) {$H_{1}$};
        \rs[3][dashed]{4}{}[22.5]
        \draw[face] (0) -- (7) node[right] {$Y$};
        \draw[face] (0) -- (6) node[left] {$X$};
	    %
	    %
	    \fill[fdomain] (0) -- (6) -- (7) -- cycle {};
	    \node[] at (180:2.2) {$\vdots$};
	    \node[] at (0:2.2) {$\vdots$};
	     \node[vdomain] at (270:2.2) {$B=Z$};
	    \node[vertex] at (0) {};
        \node[vtext] at (0:1.2) {$0 = X \cap Y$};
    \end{tikzpicture}
}
    \caption{The construction of the join for the first case when $X \cup Y \subseteq Z$.}
	\label{fig:Join_XY_in_Z}
\end{figure}

Since $X \cap Y$ is the largest face contained in $X$, $Y$ and~$Z$, we restrict to the subarrangement~${\arrangement_{X \cap Y}}$ and find the join there.
An example (in rank $2$) is given in \autoref{fig:Join_XY_in_Z}.
By \autoref{prop:facial_int_are_simplicial}, the poset of regions $\pr[\arrangement_{X \cap Y}][B_{X \cap Y}]$ is a lattice.
Thus, without loss of generality, it suffices to prove the following proposition.

\begin{proposition}
	\label{prop:XY_Z_join}
    Consider an arrangement $\arrangement$ whose poset of regions is a lattice with three faces $X$, $Y$ and $Z$ such that $Z \fwcover X$, $Z \fwcover Y$, $\{0\} = X \cap Y$ and~${X \cup Y \subseteq Z}$.
	Then $\{0\} = X \cap Y = X \fwjoin Y$.
\end{proposition}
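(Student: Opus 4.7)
The plan is to prove $X \fwjoin Y = \{0\}$ by establishing that $\{0\}$ is an upper bound of $\{X, Y\}$ in $\fw$ and then that every upper bound of $\{X, Y\}$ dominates $\{0\}$; a pivotal preliminary step is the reduction $Z = B$.

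For the preliminary step: let $H_X$ be the unique hyperplane containing $X$ (which is well-defined because $X$ has codimension $1$), so by \autoref{prop:face_inclusion_on_covectors} the covector of $X$ satisfies $X(H_X) = 0$ and $X(H) = Z(H)$ for every $H \neq H_X$, and analogously for $Y$ with its wall $H_Y$. The cover $Z \fwcover X$ with $X \subseteq Z$ translates via \autoref{thm:equivalence1} into $X(H) \leq Z(H)$ for all $H$, which at $H = H_X$ forces $Z(H_X) = +$; likewise $Z(H_Y) = +$. Because $\pr$ is a lattice, $B$ is simplicial by \autoref{thm:simplicial_implies_lattice}. Combining this with the hypothesis $X \cap Y = Z \cap H_X \cap H_Y = \{0\}$ reduces the effective rank of $\arrangement$ to $2$ (in a simplicial region of rank $n \geq 3$ the intersection $Z \cap H_X \cap H_Y$ would have positive dimension), and in rank $2$ a direct inspection of the circular arrangement shows that $B$ is the only region both of whose walls carry positive signs, yielding $Z = B$.

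With $Z = B$ in hand, the upper-bound step is immediate: $X$ and $Y$ are codimension-$1$ faces of $B$, and $\{0\} \subseteq B$ is trivial. Applying the first part of \autoref{cor:fwle_B} to the faces $X, Y, \{0\}$ of $B$, the containments $X \supseteq \{0\}$ and $Y \supseteq \{0\}$ yield $X \fwle \{0\}$ and $Y \fwle \{0\}$, establishing $\{0\}$ as an upper bound. For the least-upper-bound step, let $F$ be an arbitrary upper bound of $\{X, Y\}$. By \autoref{thm:equivalence1}, $F(H) \leq X(H)$ and $F(H) \leq Y(H)$ for every $H$, so $F(H_X) \leq 0$ and $F(H_Y) \leq 0$. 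The key geometric observation is that by the central symmetry $v \mapsto -v$ the opposite region $-B$ inherits the walls $\{H_X, H_Y\}$ of $B$ and is subdivided by no other hyperplane of $\arrangement$ (any other hyperplane fails to bound $B$, hence also $-B$); therefore the closed region $\overline{-B}$ coincides with $\{v \in V : v(H_X) \leq 0 \text{ and } v(H_Y) \leq 0\}$, which contains the face $F$. The analog of \autoref{cor:containment_B} for $-B$ (obtained by applying that corollary to $-F$) then yields $F(H) \leq 0$ for every $H \in \arrangement$, i.e.\ $\{0\} \fwle F$ by \autoref{thm:equivalence1}. This shows $\{0\}$ is the least upper bound and proves the claim.

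The main obstacle I expect is precisely the preliminary reduction $Z = B$: the covector analysis from the cover relations directly gives only $Z(H_X) = Z(H_Y) = +$, and strengthening this to $Z = B$ requires carefully combining the simpliciality of $B$ with the degenerate intersection $X \cap Y = \{0\}$ to pin down the effective rank and then exploit the circular structure of rank-$2$ arrangements to single out $B$ among all regions having both walls positive.
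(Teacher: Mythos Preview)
Your reduction to $Z = B$ does not go through, and the gap is already in the opening line: nothing in the hypotheses forces $X$ to have codimension~$1$. The cover relations $Z \fwcover X$ and $Z \fwcover Y$ with $X \cup Y \subseteq Z$ only give $\dim X = \dim Y = \dim Z - 1$ and $m_X = m_Y = m_Z$; they say nothing about $\codim Z$. A concrete counterexample: in the rank-$3$ Coxeter arrangement $A_3$, take $Z$ to be any $2$-dimensional facet of the base region $B$ and let $X,Y$ be its two bounding rays. Then $X \cap Y = \{0\}$, $m_X = m_Y = m_Z = B$, and all hypotheses of the proposition hold, yet $Z \ne B$ and $X$ has codimension~$2$. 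So the claim ``$H_X$ is the unique hyperplane containing $X$'' is already false here, and the subsequent rank-$2$ reduction collapses.

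Even if one grants that $Z$ is a region, your inference ``in a simplicial region of rank $n\ge 3$ the intersection $Z\cap H_X\cap H_Y$ would have positive dimension'' uses simpliciality of $Z$, whereas the lattice hypothesis only guarantees that $B$ is simplicial; invoking this to conclude $Z=B$ is circular. The paper avoids any such reduction. It proves $X(H)\ge 0$ for every $H$ directly, by assuming some $H$ with $X(H)=-$ and deriving a contradiction inside the three-hyperplane subarrangement $\{H_1,H_2,H\}$ (where $H_1,H_2$ are walls of $m_Z$ cutting out $X$ and $Y$): one checks that no region of this subarrangement can be all-positive, contradicting the existence of its base region. Once $X,Y\subseteq B$ is established, the least-upper-bound argument uses simpliciality of $B$ to see that $\min\big(X(H),Y(H)\big)=0$ on every wall of $B$, hence any common upper bound $U$ has $U(H)\le 0$ there and so everywhere. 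Your proof is missing precisely this first step; the rest of your argument (upper bound via \autoref{cor:fwle_B}, lower bound via \autoref{cor:containment_B}) would then essentially recover the paper's second half.
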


\begin{proof}
	We first prove that~$X \fwle X \cap Y = \{0\}$.
	Assume by contradiction that there is~$H \in \arrangement$ such that~$X(H) = -$.
	Since $X \subseteq Z$, we obtain that $Z(H) = -$ by \autoref{prop:face_inclusion_on_covectors}.
    Moreover, since $Z \fwcover Y$, we have $Y(H) \leq Z(H) = -$.

    Let $[m_Z, M_Z]$ be the facial interval in the poset of regions associated to $Z$.
    As $\dim X = \dim Y = \dim Z - 1$, there exist boundary hyperplanes $H_1$ and $H_2$ of~$m_Z$ such that $X = Z \cap H_1$ and $Y = Z \cap H_2$.
    Since~$X \ne Z \ne Y$, we obtain by \autoref{lem:Y_(_X_gives_nonzero_X*(H)} that~$Z(H_1) \ne 0 \ne Z(H_2)$.
    Since~$0 = X(H_1) \le Z(H_1)$ and $0 = Y(H_2) \le Z(H_2)$, we conclude that~$Z(H_1) = Y(H_1) = +$ and $Z(H_2) = X(H_2) = +$.
    
    Let $\arrangement' \coloneqq \{H_1, H_2, H\}$ be the subarrangement of $\arrangement$ with these three hyperplanes.
    Since $Z(H_1) = Z(H_2) = +$ and $Z(H) = -$, the face $Z_{\arrangement'}$ is a region in $\arrangement'$.
    Moreover, we have~$(X \cap Y)_{\arrangement'} (H) = 0$ because~$(X \cap Y)(H) = 0$.
    We thus obtain that~$H_1$ and~$H_2$ are the only boundary hyperplanes of~$Z_{\arrangement'}$ in~$\arrangement'$.
    Therefore, for any region~${R \in \regions[\arrangement'] \setm \left\{ Z_{\arrangement'} \right\}}$, then either $H_1$ or $H_2$ is in the separation set $S(R, Z_{\arrangement'})$.
    Since~${Z_{\arrangement'}(H_1) = Z_{\arrangement'}(H_2) = +}$, then either $R(H_1) = -$ or $R(H_2) = -$.
    It follows that no region of~$\arrangement'$ is all positive, a contradiction since the base region~$B_{\arrangement'}$ is all positive.

    We conclude that~$X(H) \ge 0$ for all~${H \in \arrangement}$, so that~${X \fwle X \cap Y = \{0\}}$.
	By symmetry, we also obtain that~$Y \fwle X \cap Y = \{0\}$.

    Finally, to prove that $\{0\} = X \cap Y = X \fwjoin Y$, we consider an arbitrary face~$U$ in $\arrangement$ such that $X \fwle U$ and $Y \fwle U$.
    Then, $U(H) \le \min \big( X(H), Y(H) \big)$ for all~${H \in \arrangement}$.
    Since $(X \cap Y)(H) =  0 \le \min \big( X(H), Y(H) \big)$ for all $H \in \arrangement$, the faces $X$ and $Y$ of~$\faces$ are contained in $B$ by \autoref{cor:containment_B}.
    By \autoref{prop:facial_int_are_simplicial},~$B$ is simplicial and therefore, there exists $H_3$, $H_4$ in $\boundary(B)$ such that $X \cap Y = X \cap H_3 = Y \cap H_4$ with $X(H) = Y(H) = 0$ for all~${H \in \boundary(B)\setm \{H_3, H_4\}}$.
    Note that $H_3$ and $H_4$ could be the same $H_1$ and $H_2$ as before.
    Therefore, $0 = \min \big( X(H), Y(H) \big)$ for all~${H \in \boundary(B)}$.
    We conclude that~$U(H) \le 0$ for all $H \in \boundary(B)$.
    By \autoref{cor:containment_B},~${U(H) \leq 0}$ for all $H \in \arrangement$.
    Therefore, $X \cap Y \fwle U$.
\end{proof}

%
%


\subsubsection{Second case: $Z \subseteq X \cap Y$ and $\dim X = \dim Y = \dim Z + 1$}
\label{subsubsec:second_case}

\begin{figure}[b]
\DeclareDocumentCommand{\rs}{ O{1.1cm} O{->} m m O{0} O{0}} {
	\def \radius {#1}
	\def \inputPoints{#3}
	\def \excludeRoots{#4}
	\def \style {#2}
	\def \initialRotation {#5}
	\def \extraStyle {#6}

	\pgfmathtruncatemacro{\points}{\inputPoints * 2}
	\pgfmathsetmacro{\degrees}{360 / \points}
	
	\coordinate (0) at (0,0);
	
	\foreach \x in {1,...,\points}{%
		\pgfmathsetmacro{\location}{(\points+(\x-1))*\degrees + \initialRotation}
		
		\coordinate (\x) at (\location:\radius);
	}

	\ifthenelse{\equal{\excludeRoots}{}}{
		\foreach \x in {1,...,\points}{%
			\ifthenelse{\equal{\extraStyle}{0}}{
				\draw[\style] (0) -- (\x);
			}{
			\draw[\style, \extraStyle] (0) -- (\x);
		}
	}
}{
\foreach \x in {1,...,\points}{%
	\edef \showPoint {1};
	
	\foreach \y in \excludeRoots {
		\ifthenelse{\equal{\x}{\y}}{
			\xdef \showPoint {0};
		}{}
	}
	
	\ifthenelse{\equal{\showPoint}{1}}{
		\ifthenelse{\equal{\extraStyle}{0}}{
			\draw[\style] (0) -- (\x);
		}{
		\draw[\style, \extraStyle] (0) -- (\x);
	}
}{}
}
}  
}

\centerline{
    \begin{tikzpicture}
	    [
	    fdomain/.style={fill=blue!15!white,color=blue!15!white,opacity=0.3},
	    vdomain/.style={color=blue!85!white},
	    face/.style={draw=red!95!black,fill=red!95!black,color=red!95!black,ultra thick, -},
	    vertex/.style={inner sep=1pt,circle,draw=green!85!white,fill=green!85!white,thick},
	    vtext/.style={color=green!55!black},
	    ]
	    %
	    %
	    %
        \rs[3.5][ultra thick]{4}{}[22.5]
	    \node[above left] at (2) {$H_{1}$};
	    \node[above right] at (3) {$H_{2}$};
        \rs[3][dashed]{4}{}[22.5]
        \draw[face] (0) -- (3) node[left] {$Y$};
        \draw[face] (0) -- (2) node[right] {$X$};
	    %
	    %
	    \fill[fdomain] (0) -- (2) -- (3) -- cycle {};
        \node[vdomain] at (90:2.2) {$W$};
	    \node[] at (180:2.2) {$\vdots$};
	    \node[] at (0:2.2) {$\vdots$};
	     \node[vdomain] at (270:2.2) {$B$};
	    \node[vertex] at (0) {};
        \node[vtext] at (0:.9) {$0 = Z$};
    \end{tikzpicture}
}
    \caption{The construction of the join for the second case when $Z \subseteq X \cap Y$.}
	\label{fig:Join_Z_in_XY}
\end{figure}

Since $Z$ is the largest face contained in~$X$, $Y$ and~$Z$, we restrict to the subarrangement $\arrangement_Z$ and find the join there.
An example (in rank $2$) is given in \autoref{fig:Join_Z_in_XY}.
By \autoref{prop:facial_int_are_simplicial}, the poset of regions $\pr[\arrangement_Z][B_Z]$ is a lattice.
Therefore, without loss of generality, we consider an arrangement $\arrangement$ whose poset of regions is a lattice with distinct faces~$X$, $Y$, and $Z = \{0\}$ such that $\{0\} = Z \fwcover X$ and $\{0\} = Z \fwcover Y$.
Observe that this implies by \autoref{cor:containment_B} that~$X$ and~$Y$ are rays of the region~$-B$ opposite to the base region~$B$.
Since~$\pr$ is a lattice, $-B$ is simplicial, and therefore there is a $2$-dimensional face~$W$ of~$-B$ containing both $X$ and~$Y$.
This gives us the join of~$X$ and~$Y$.

\begin{proposition}
	\label{prop:Z_XY_join}
    Consider an arrangement $\arrangement$ whose poset of regions is a lattice with distinct faces $X$, $Y$, and~${Z = \{0\}}$ such that $\{0\} = Z \fwcover X$ and~${\{0\} = Z \fwcover Y}$.
	Then~$X \fwjoin Y = W$ where~$W$ is the $2$-dimensional face of~$-B$ containing both $X$ and~$Y$.
\end{proposition}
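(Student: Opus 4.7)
The plan is to first locate $X$, $Y$, and $W$ geometrically, then check the join property via the covector characterization of \autoref{thm:equivalence1}\autoref{item:fwo_covects}.

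First I would show that $X, Y \subseteq -B$. Since $Z = \{0\}$, its covector satisfies $Z(H) = 0$ for all $H \in \arrangement$. The cover $Z \fwcover X$ in particular gives $Z \fwle X$, so by the covector order $X(H) \le 0$ for every $H \in \arrangement$. The analogue of \autoref{cor:containment_B} for the opposite region $-B$ then gives $X \subseteq -B$, and similarly $Y \subseteq -B$. Since $\pr$ is a lattice, $B$ is simplicial by \autoref{thm:simplicial_implies_lattice}, and by the self-duality of \autoref{prop:por_selfdual} so is $-B$. Because $\dim X = \dim Y = \dim Z + 1 = 1$, both are rays of the simplicial cone $-B$, and as distinct rays they span a unique two-dimensional face $W$ of $-B$ containing them both.

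Next I would verify that $X \fwle W$ and $Y \fwle W$. This is immediate from \autoref{cor:fwle_B}: since $X, Y, W$ are all faces of $-B$ and $X \subseteq W \supseteq Y$, we obtain $X \fwle W$ and $Y \fwle W$.

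For the join property, suppose $U \in \faces$ satisfies $X \fwle U$ and $Y \fwle U$. I would prove $W \fwle U$ via \autoref{thm:equivalence1}\autoref{item:fwo_covects}, that is $U(H) \le W(H)$ for every $H \in \arrangement$. The key geometric observation is that $W \subseteq H$ if and only if $X \subseteq H$ and $Y \subseteq H$: the direct implication is clear as $X, Y \subseteq W$, while the converse holds because $W$ is the two-dimensional face of $-B$ whose linear span equals $\vspan(X) + \vspan(Y)$. This gives the case analysis: if both $X \subseteq H$ and $Y \subseteq H$ then $W(H) = 0$ and $U(H) \le X(H) = 0 = W(H)$; otherwise $W \not\subseteq H$ and, since $W \subseteq -B$, the covector $W(H) = -$, which is forced to equal $U(H)$ because $U(H) \le X(H) = -$ or $U(H) \le Y(H) = -$ depending on which ray is not contained in $H$.

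The main work is really the geometric step identifying $W(H)$ from $X(H)$ and $Y(H)$; everything else is a direct application of \autoref{thm:equivalence1}, \autoref{cor:fwle_B}, and the simpliciality of $-B$. I do not anticipate a significant obstacle, since once $X, Y \subseteq -B$ is established all covector values collapse into $\{0,-\}$ and the comparison with $U$ becomes an elementary sign check.
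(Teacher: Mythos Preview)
Your proposal is correct. The setup and the verification that $X \fwle W$ and $Y \fwle W$ match the paper exactly. For the join property the paper takes a slightly more conceptual route: from $U(H) \le X(H) \le 0$ for all $H$ it concludes that $U$ is itself a face of $-B$ (via the analogue of \autoref{cor:containment_B}), then applies \autoref{cor:fwle_B} in the reverse direction to turn $X \fwle U$ and $Y \fwle U$ into the containments $X \subseteq U$ and $Y \subseteq U$, whence $W \subseteq U$ and finally $W \fwle U$ again by \autoref{cor:fwle_B}. Your argument instead bypasses the detour through $U \subseteq -B$ by directly comparing covectors, using the identification $W(H) = 0 \iff X(H) = Y(H) = 0$. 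Both are short; the paper's version avoids the case split at the cost of invoking \autoref{cor:fwle_B} twice, while yours is a self-contained sign check.
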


\begin{proof}
	Since~$X \subseteq W$ are all faces of~$-B$, we have~$X \fwle W$ by \autoref{cor:fwle_B}.
	Similarly, we have~$Y \fwle W$.
	
	Conversely, consider a face~$U \in \faces$ such that $X \fwle U$ and $Y \fwle U$.
    For any~$H \in \arrangement$, we have~$X(H) \le 0$ by \autoref{cor:containment_B} since~$X$ is a face of~$-B$.
    Since~$X \fwle U$, we have $U(H) \le X(H) \le 0$ for all $H \in \arrangement$, which implies that~$U$ is a face of $-B$ by \autoref{cor:containment_B}.
    Since~$X \fwle U$, \autoref{cor:fwle_B} implies that~$X \subseteq U$.
    Similarly, $Y \subseteq U$ and thus~$W \subseteq U$.
    We conclude that~$W \fwle U$ by \autoref{cor:fwle_B}.
\end{proof}


\subsubsection{Third case: $X \subseteq Z \subseteq Y$  and $\dim X + 1 = \dim Y - 1 = \dim Z$}
\label{subsubsec:third_case}

\begin{figure}[b]
\DeclareDocumentCommand{\rs}{ O{1.1cm} O{->} m m O{0} O{0}} {
	\def \radius {#1}
	\def \inputPoints{#3}
	\def \excludeRoots{#4}
	\def \style {#2}
	\def \initialRotation {#5}
	\def \extraStyle {#6}

	\pgfmathtruncatemacro{\points}{\inputPoints * 2}
	\pgfmathsetmacro{\degrees}{360 / \points}
	
	\coordinate (0) at (0,0);
	
	\foreach \x in {1,...,\points}{%
		\pgfmathsetmacro{\location}{(\points+(\x-1))*\degrees + \initialRotation}
		
		\coordinate (\x) at (\location:\radius);
	}

	\ifthenelse{\equal{\excludeRoots}{}}{
		\foreach \x in {1,...,\points}{%
			\ifthenelse{\equal{\extraStyle}{0}}{
				\draw[\style] (0) -- (\x);
			}{
			\draw[\style, \extraStyle] (0) -- (\x);
		}
	}
}{
\foreach \x in {1,...,\points}{%
	\edef \showPoint {1};
	
	\foreach \y in \excludeRoots {
		\ifthenelse{\equal{\x}{\y}}{
			\xdef \showPoint {0};
		}{}
	}
	
	\ifthenelse{\equal{\showPoint}{1}}{
		\ifthenelse{\equal{\extraStyle}{0}}{
			\draw[\style] (0) -- (\x);
		}{
		\draw[\style, \extraStyle] (0) -- (\x);
	}
}{}
}
}  
}

\centerline{
    \begin{tikzpicture}
	    [
	    fdomain/.style={fill=blue!15!white,color=blue!15!white,opacity=0.3},
	    vdomain/.style={color=blue!85!white},
	    face/.style={draw=red!95!black,fill=red!95!black,color=red!95!black,ultra thick, -},
	    vertex/.style={inner sep=1pt,circle,draw=green!85!white,fill=green!85!white,thick},
	    vtext/.style={color=green!55!black},
	    ]
	    %
	    %
	    %
        \rs[3.5][ultra thick]{4}{}[22.5]
	    \node[above left] at (2) {$H_{2}$};
	    \node[above right] at (3) {$H_{1}$};
        \rs[3][dashed]{4}{}[22.5]
	    \draw[face] (0) -- (6) node[above left] {$Z$};
        \draw[face] (0) -- (7) node[above right] {$W$};
        \draw[face] (0) -- (3) node[left] {$-W$};
	    %
	    %
	    \fill[fdomain] (0) -- (6) -- (7) -- cycle {};
        \node[vdomain] at (270:2.2) {$Y_{-Z}$};
	    \fill[fdomain] (0) -- (5) -- (6) -- cycle {};
	    \node[vdomain] at (225:2.2) {$Y$};
	    \node[] at (180:2.2) {$\vdots$};
	    \node[] at (0:2.2) {$\vdots$};
	    \node[vertex] at (0) {};
        \node[vtext] at (0:.9) {$0 = X$};
    \end{tikzpicture}
}
    \caption{The construction of the join in the third case when $X \subseteq Z \subseteq Y$.}
	\label{fig:Join_X_in_Z_in_Y}
\end{figure}

Since $X$ is the largest face contained in~$X$, $Y$ and $Z$, we restrict to the subarrangement~$\arrangement_X$ and find the join there.
An example (in rank $2$) is given in \autoref{fig:Join_X_in_Z_in_Y}.
By \autoref{prop:facial_int_are_simplicial}, the poset of regions $\pr[\arrangement_X][B_X]$ is a lattice.
Therefore, without loss of generality, we consider an arrangement $\arrangement$ whose poset of regions is a lattice with three faces~$X = \{0\}$, $Y$ and~$Z$ such that $Z \fwcover X = \{0\}$, $Z \fwcover Y$ and~${\{0\} = X \subseteq Z \subseteq Y}$.
Observe that this implies that~$Z$ is a ray of the base region~$B$ by \autoref{cor:containment_B}.
Remember that for two faces~$F$ and~$G$, we denote by~$F_{-G}$ the reorientation of~$F$ by~$G$ (see \autoref{subsec:covectors}).
Observe that~$Z$ is a ray of the $2$-dimensional cone~$Y_{-Z}$, and let~$W$ denote its other ray.
We aim to prove the following proposition.

\begin{proposition}
    \label{prop:X_Z_Y_join}
    Consider an arrangement $\arrangement$ whose poset of regions is a lattice with three faces~$X = \{0\}$, $Y$ and~$Z$ such that $Z \fwcover X = \{0\}$, $Z \fwcover Y$ and~${\{0\} = X \subseteq Z \subseteq Y}$.
    Then $X \fwjoin Y = -W$ where~$W$ is the ray of~$Y_{-Z}$ distinct from~$Z$.
\end{proposition}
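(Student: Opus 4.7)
My plan is to show $-W$ is a common upper bound of $X$ and $Y$, then exploit the simpliciality of $-B$ (which follows from $\pr$ being a lattice via \autoref{thm:simplicial_implies_lattice} applied to $-B$) to conclude that every common upper bound dominates $-W$. The critical identity will be that a specific wall $H^*$ of $B$ lies in $\arrangement_Z \setminus \arrangement_Y$, which forces $Y(H^*) = -$.

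I begin by showing $Y_{-Z} \subseteq B$. From $M_Y = M_Z$ we get $Y \subseteq M_Z$, and since $M_Z$ corresponds to the opposite of the base region in the localization $\arrangement_Z$, we have $M_Z(H) = -$ for all $H \in \arrangement_Z$, so \autoref{prop:face_inclusion_on_covectors} yields $Y(H) \le 0$ there. Combined with $Y(H) = Z(H) = +$ for $H \notin \arrangement_Z$, this gives $(Y_{-Z})(H) \ge 0$ everywhere, so $Y_{-Z} \subseteq B$ by \autoref{cor:containment_B}. Thus $W \subseteq B$, making $W$ a ray of the simplicial region $B$. Let $H^*$ be the unique wall of $B$ with $W(H^*) = +$. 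Since $Z$ is another ray of $B$ distinct from $W$, the simpliciality of $B$ gives $Z(H^*) = 0$, so $H^* \in \arrangement_Z$. Moreover $W \not\subseteq H^*$ together with $\vspan(Z) + \vspan(W) = L$ (the 2D plane containing $Y_{-Z}$) shows $L \not\subseteq H^*$, so $H^* \notin \arrangement_Y$. Hence $H^* \in \arrangement_Z \setminus \arrangement_Y$ and $Y(H^*) = -$.

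To verify $-W$ is a common upper bound, $X \fwle -W$ is immediate since $-W \subseteq -B$. For $Y \fwle -W$ I would case split on $W(H)$: when $W(H) = +$, we have $(-W)(H) = - \le Y(H)$ trivially; when $W(H) = 0$ then $H \in \arrangement_W$, and the identity $\arrangement_W \cap \arrangement_Z = \arrangement_Y$ (again from $\vspan(Z) + \vspan(W) = L$) gives $Y(H) \ge 0 = (-W)(H)$.

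For minimality, let $U$ be any common upper bound of $\{X,Y\}$. Then $U(H^*) \le Y(H^*) = -$ forces $U(H^*) = -$. Because $-B$ is simplicial, each of its faces is the positive hull of a subset of extremal rays, and such a face $F$ contains the ray $r_j$ opposite wall $H^B_j$ iff $F(H^B_j) = -$. Since $-W$ is precisely the ray of $-B$ opposite $H^*$, we conclude $-W \subseteq U$, whence $-W \fwle U$ by \autoref{cor:fwle_B}. I expect the main obstacle to be the preliminary geometric analysis in the second paragraph: identifying $H^*$ and verifying $Z(H^*) = 0$ relies crucially on $B$'s simpliciality together with $W \ne Z$ (built into the hypothesis that $W$ is the ray of $Y_{-Z}$ distinct from $Z$).
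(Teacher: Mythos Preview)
Your approach is correct and mirrors the paper's proof (split there into Lemmas~4.9--4.12): both show $W$ is a ray of $B$, single out the unique wall of $B$ not containing $W$ (your $H^*$ is the paper's $H_2$, defined there instead by $Z = Y \cap H_2$), verify $Y(H^*) = -$, and then use the simplicial structure of $-B$ to deduce $-W \subseteq U$ for any common upper bound $U$. The one place to tighten is your verification of $Y \fwle -W$ when $W(H)=0$: the identity $\arrangement_W \cap \arrangement_Z = \arrangement_Y$ only handles $H \in \arrangement_Z$, and you should also note that for $H \in \arrangement_W \setminus \arrangement_Z$ one has $Y(H) = Z(H) = +$ since $Z \subseteq Y$ and $Z \subseteq B$.
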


We will prove that $X \fwjoin Y = -W$ in \autoref{lemma:W-X_is_join_cand} and \autoref{lemma:W-X_is_join}.
We first identify two crucial boundary hyperplanes of the base region~$B$.

\begin{lemma}
    \label{lem:def_H1_H2}
    There exists two unique boundary hyperplanes $H_1$ and $H_2$ of the base region~$B$ such that $\{0\} = X = Z \cap H_1$ and~$Z = Y \cap H_2$.
\end{lemma}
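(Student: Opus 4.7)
The plan is to exploit that $\pr$ is a lattice, which forces $B$ to be simplicial by \autoref{thm:simplicial_implies_lattice}. Consequently $B$ has exactly $n = \rank(\arrangement)$ linearly independent walls $K_1,\dots,K_n$, and its face lattice is Boolean on these walls. Since $Z$ is a ray of $B$, it is the intersection of $B$ with exactly $n-1$ of these walls. I will take $H_1$ to be the unique remaining wall; then $Z \cap H_1 = B \cap K_1 \cap \cdots \cap K_n = \{0\} = X$, and any other wall already contains $Z$, giving uniqueness of $H_1$ immediately.

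The more delicate half is $H_2$. The key idea is to show that the reorientation $Y_{-Z}$ is a $2$-dimensional face of $B$ containing $Z$. Because $Z \subseteq Y$, the reorientation $Y_{-Z}$ is well-defined, contains $Z$, and has the same dimension as $Y$. To verify $Y_{-Z} \subseteq B$, via \autoref{cor:containment_B}, I would check $Y_{-Z}(H) \geq 0$ on each $H \in \arrangement$, splitting into two cases: for $H \notin \arrangement_Z$, the inclusion $Z \subseteq Y$ combined with $Z \subseteq B$ forces $Y(H) = Z(H) = +$, hence $Y_{-Z}(H) = +$; for $H \in \arrangement_Z$, the cover relation $Z \fwcover Y$ with $Z \subseteq Y$ gives $M_Y = M_Z$ by \autoref{cor:fwo_covers}, and since $m_Z = B$ forces $S(M_Z) = \arrangement_Z$ by \autoref{lem:covector_and_sepSet}, we get $Y(H) \le 0$, hence $Y_{-Z}(H) \geq 0$.

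Having placed $Y_{-Z}$ as a $2$-dimensional face of the simplicial region $B$ containing the ray $Z$, the Boolean face structure of $B$ shows that $Y_{-Z}$ is obtained by intersecting $B$ with exactly $n-2$ of the $n-1$ walls containing $Z$. I then let $H_2$ be the unique wall of $B$ that contains $Z$ but not $Y_{-Z}$. To confirm $Z = Y \cap H_2$, I would observe $H_2 \supseteq Z$ and $Y(H_2) = -Y_{-Z}(H_2) = -$ (using $H_2 \in \arrangement_Z$), so $H_2 \not\supseteq Y$; since $H_2$ cannot contain both rays of the $2$-dimensional cone $Y$ without containing $Y$, we must have $Y \cap H_2 = Z$.

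For uniqueness, any candidate $H_2' \in \boundary(B)$ with $Y \cap H_2' = Z$ satisfies $H_2' \in \arrangement_Z \setminus \arrangement_Y$, and since $\arrangement_Y = \arrangement_{Y_{-Z}}$, the set $\boundary(B) \cap \arrangement_Z \setminus \arrangement_{Y_{-Z}}$ is the singleton $\{H_2\}$ by the simplicial description of $Y_{-Z}$. The main obstacle I expect is the justification that $Y_{-Z}$ lies in $B$: this requires carefully combining the inclusion $Z \subseteq Y$ (which controls signs outside $\arrangement_Z$) with the cover relation $Z \fwcover Y$ (which controls signs inside $\arrangement_Z$), translating the abstract cover into concrete geometric containment.
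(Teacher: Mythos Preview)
Your proof is correct, but it takes a different route from the paper for the second hyperplane~$H_2$. The paper argues directly with signs: a boundary hyperplane~$H_2$ of~$B$ satisfies $Z = Y \cap H_2$ precisely when $Z(H_2)=0$ and $Y(H_2)\ne 0$; since $Z \fwle Y$ forbids $Y(H_2)=+$, the condition becomes $Y(H_2)=-$ and $Z(H_2)=0$, and uniqueness follows from the dimension gap $\dim Y = \dim Z + 1$ (two such hyperplanes would force an intermediate face $Z \subsetneq Y\cap H_2 \subsetneq Y$). You instead first prove that the reorientation $Y_{-Z}$ is a $2$-dimensional face of the simplicial region~$B$ containing the ray~$Z$, and then read off~$H_2$ from the Boolean face lattice of~$B$.

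Your approach is a bit longer but buys something: the fact that $Y_{-Z}$ is a $2$-face of~$B$ is precisely what the paper establishes in the \emph{next} two lemmas (\autoref{lem:covec_Y_Z_W_W-X_comparisons} and the first paragraph of \autoref{lemma:W-X_is_join_cand}) in order to identify the ray~$W$ and show $-W$ is the join. So you are effectively front-loading that work, which makes existence and uniqueness of~$H_2$ completely transparent via simpliciality, at the cost of proving more than the lemma itself asks. The paper's direct argument is shorter and self-contained but leaves existence of such a boundary hyperplane somewhat implicit.
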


\begin{proof}
    As $Z$ is a ray of the (simplicial) base region~$B$, there is a unique~${H_1 \in \boundary(B)}$ such that~$\{0\} = X = Z \cap H_1$.
	For the second hyperplane, we first claim that there is a unique boundary hyperplane~$H_2$ of the base region such that~$Y(H_2) = -$ while $Z(H_2) = 0$.
	Indeed, if there were two such hyperplanes~$H_2$ and~$H_2'$, we would have~$Z \subsetneq Y \cap H_2 \subsetneq Y$ contradicting that~$\dim Y = \dim Z + 1$.
	Moreover, since~$Z \fwle Y$, there is no hyperplane~$H$ such that~$Y(H) = +$ and~$Z(H) = 0$.
	We conclude that~$H_2$ is the unique hyperplane of~$\boundary(B)$ such that~$Z = Y \cap H_2$.
\end{proof}

\begin{lemma}
    \label{lem:covec_Y_Z_W_W-X_comparisons}
    Consider the two boundary hyperplanes~$H_1$ and~$H_2$ of the base region given in \autoref{lem:def_H1_H2}. Then
    \begin{align*}
        & 0 = W(H_1) = X(H_1) < Y(H_1) = Z(H_1) = +\text{, }\\
        & - = Y(H_2) < X(H_2) = 0 = Z(H_2) < W(H_2) = + \text{, and} \\
        & 0 = W(H) = X(H) = Y(H) = Z(H) \text{ for all } H \in \boundary(B) \setm \{H_1,H_2\}.
    \end{align*}
\end{lemma}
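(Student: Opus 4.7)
The plan is to determine the covectors of $X$, $Z$, $Y$, and then $W$ in turn, relying on the fact that $B$ is simplicial (\autoref{prop:facial_int_are_simplicial}) together with the sign characterization of faces of $B$ from \autoref{cor:containment_B}.

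The covector of $X = \{0\}$ is identically $0$. For $Z$, I will first establish $Z \subseteq B$: since $X \subseteq Z$ and $Z \fwl X$, \autoref{prop:two_implies_third} gives $m_Z = m_X$; because $\{0\}$ is contained in every region we have $m_X = B$, so \autoref{lem:covector_and_sepSet} applied to $S(m_Z) = \varnothing$ yields $Z(H) \ge 0$ for all $H$. Thus $Z$ is a ray of the simplicial region $B$, and hence lies on exactly $n-1$ of the $n$ boundary hyperplanes of $B$. Since $X = Z \cap H_1 = \{0\} \ne Z$, the ray $Z$ is not contained in $H_1$, which forces $Z(H_1) = +$ while $Z(H) = 0$ on every other boundary hyperplane of $B$, in particular on $H_2$ and on every $H \in \boundary(B) \setm \{H_1, H_2\}$.

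The covector of $Y$ will then be pinned down in three quick steps: $Y(H_2) = -$ by the construction of $H_2$ in \autoref{lem:def_H1_H2}; from $Z \subseteq Y$ and $Z(H_1) = + \ne 0$, \autoref{prop:face_inclusion_on_covectors} gives $Y(H_1) = Z(H_1) = +$; and for any $H \in \boundary(B) \setm \{H_1, H_2\}$, the inequality $Y(H) \le Z(H) = 0$ coming from $Z \fwle Y$, combined with the uniqueness of $H_2$ in \autoref{lem:def_H1_H2}, rules out $Y(H) = -$, leaving $Y(H) = 0$.

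The main work is the determination of $W$, which I plan to handle by showing $Y_{-Z} \subseteq B$ and concluding that $W$ is one of the rays of the simplicial cone $B$. Substituting the computed signs into the reorientation formula gives $(Y_{-Z})(H_1) = Y(H_1) = +$, $(Y_{-Z})(H_2) = -Y(H_2) = +$, and $(Y_{-Z})(H) = -Y(H) = 0$ for $H \in \boundary(B) \setm \{H_1, H_2\}$; \autoref{cor:containment_B} then gives $Y_{-Z} \subseteq B$. So $W$, being a ray of $Y_{-Z}$, is a ray of $B$ and lies on exactly $n-1$ of the boundary hyperplanes of $B$. The inclusion $W \subseteq Y_{-Z}$ forces $W(H) = 0$ wherever $(Y_{-Z})(H) = 0$, accounting for the $n-2$ hyperplanes in $\boundary(B) \setm \{H_1, H_2\}$, so $W$ vanishes on exactly one of $H_1, H_2$. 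I expect the main obstacle to be ruling out the case $W(H_2) = 0$: the argument is that this choice would give $W$ the same vanishing set on $\boundary(B)$ as $Z$, and by simpliciality of $B$ (whose rays are the $n-1$-fold intersections of boundary hyperplanes, positively oriented) this would force $W = Z$, contradicting the defining choice of $W$. Consequently $W(H_1) = 0$ and $W(H_2) = (Y_{-Z})(H_2) = +$.
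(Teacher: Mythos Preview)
Your proposal is correct and follows essentially the same route as the paper: compute the signs of $X$, $Z$, $Y$ on $\boundary(B)$ in turn, then compute $Y_{-Z}$ on $\boundary(B)$ to identify it as the $2$-face of $B$ cut out by $\boundary(B)\setm\{H_1,H_2\}$, and read off $W$ as the ray distinct from $Z$. The only cosmetic differences are that you justify $Z\subseteq B$ explicitly via \autoref{prop:two_implies_third} and \autoref{lem:covector_and_sepSet} (the paper takes this from context), and you invoke the uniqueness of $H_2$ to get $Y(H)=0$ on $\boundary(B)\setm\{H_1,H_2\}$ where the paper instead uses $\dim Y=\dim Z+1$; both arguments are valid and amount to the same thing.
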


\begin{proof}
	Since~$X = \{0\}$, we have~$X(H) = 0$ for all~$H \in \boundary(B)$.
    By definition of~$H_1$ and~$H_2$ and \autoref{lem:Y_(_X_gives_nonzero_X*(H)}, we have $X(H_1) = 0 \ne Z(H_1)$ and~${Z(H_2) = 0 \ne Y(H_2)}$.
    Since~$Z \fwle X$ and $Z \fwle Y$, this implies that~$Z(H_1) = +$ and~${Y(H_2) = -}$.
	Moreover, as~$Z$ is a face of~$Y$, we obtain that~$Y(H_1) = +$.
    Finally, for any hyperplane~${H \in \boundary(B) \setm \{H_1,H_2\}}$, we have~$Z(H) = 0$ by uniqueness of~$H_2$, and therefore~${Y(H) = 0}$ since~$\dim Y = \dim Z + 1$ and $Z = Y \cap H_2$.
	
	By definition of the reorientation operation, we thus obtain that~${Y_{-Z}(H_1) = +}$ and~$Y_{-Z}(H_2) = +$, while~$Y_{-Z}(H) = 0$ for all~${H \in \boundary(B) \setm \{H_1,H_2\}}$.
	In other words, $Y_{-Z}$ is the $2$-dimensional face of the base region~$B$ given by its intersection with all hyperplanes of~${\boundary(B) \setm \{H_1,H_2\}}$.
	Finally, since~$W$ is the ray of~$Y_{-Z}$ distinct from~$Z$, we obtain that~$W(H_1) = 0$, that~$W(H_2) = +$ and that~$W(H) = 0$ for all~${H \in \boundary(B) \setm \{H_1,H_2\}}$.
\end{proof}

\begin{lemma}
    \label{lemma:W-X_is_join_cand}
    We have~${X \fwcover -W}$ and $Y \fwle -W$ in the facial weak order.
\end{lemma}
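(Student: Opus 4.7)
The plan is to verify both assertions via the covector criterion of \autoref{thm:equivalence1} and the sign table from \autoref{lem:covec_Y_Z_W_W-X_comparisons}. The common starting point is that $W$ is a ray of the simplicial base region $B$ (\autoref{prop:facial_int_are_simplicial}); hence $W(H) \ge 0$ for every $H \in \arrangement$ by \autoref{cor:containment_B}, so $-W(H) \le 0$ for every $H$.

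For the first assertion $X \fwcover -W$, I would note that $\dim(-W) - \dim X = 1$ and $X = \{0\} \subseteq -W$ as cones. Since $-W(H) \le 0$ for every $H$, \autoref{lem:covector_and_sepSet} gives $S(M_{-W}) = \arrangement$, so $M_{-W} = -B = M_X$, and the cover relation then follows directly from \autoref{prop:covers_for_fwo}.

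For the second assertion $Y \fwle -W$, \autoref{thm:equivalence1} reduces the problem to checking $-W(H) \le Y(H)$ for each $H \in \arrangement$. The cases $H \in \boundary(B)$ are immediate from \autoref{lem:covec_Y_Z_W_W-X_comparisons}, and any $H \notin \boundary(B)$ with $W(H) = +$ is trivial since then $-W(H) = -$. The only delicate case, and the main obstacle, is $H \notin \boundary(B)$ with $W(H) = 0$, where I must establish $Y(H) \ge 0$. The key geometric input is that $Z$ and $W$ are the two rays of the $2$-dimensional face $Y_{-Z} \subseteq B$, so $(Y_{-Z})(H) = 0$ if and only if $Z(H) = 0 = W(H)$. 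Combined with the defining formula $(Y_{-Z})(H) = -Y(H)$ when $Z(H) = 0$ and $(Y_{-Z})(H) = Y(H)$ otherwise, and with $Z(H) \in \{0,+\}$ since $Z \subseteq B$, a short split on $Z(H)$ yields the desired $Y(H) \ge 0$: if $Z(H) = +$ then $Y(H) = (Y_{-Z})(H) = +$, while if $Z(H) = 0$ then $(Y_{-Z})(H) = 0 = -Y(H)$ forces $Y(H) = 0$.
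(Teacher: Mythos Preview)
Your argument is correct and mirrors the paper's: both proofs rest on the facts that $W(H)\ge 0$ for all $H$ (so $-W(H)\le 0$), that $Y_{-Z}$ is a $2$-dimensional face of $B$ with rays $Z$ and $W$ (whence $(Y_{-Z})(H)=0 \iff Z(H)=W(H)=0$), and the reorientation formula; the paper phrases the $Y\fwle -W$ part as a contradiction while you do a direct case split, but the content is identical. One small logical reordering: you assert that $W$ is a ray of $B$ and then deduce $W(H)\ge 0$, whereas the correct flow (and the paper's) is the reverse---\autoref{lem:covec_Y_Z_W_W-X_comparisons} gives $W(H)\ge 0$ on $\boundary(B)$, then \autoref{cor:containment_B} yields $W\subseteq B$.
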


\begin{proof}
	By \autoref{lem:covec_Y_Z_W_W-X_comparisons}, $W(H) \ge 0$ for all~$H \in \boundary(B)$, therefore $W(H) \ge 0$ for all~$H \in \arrangement$ by \autoref{cor:containment_B}.
	We therefore obtain that both $W$ and~$Z$ are rays of the base region~$B$, and thus~$Y_{-Z}$ is a $2$-dimensional face of~$B$ as well.
	
	Since~$X = \{0\}$ and $W$ is a ray of the base region~$B$, we have that~${-W(H) \le X(H)}$ for any~$H \in \arrangement$ so that~$X \fwle -W$. Since~$X \subseteq -W$ and $\dim (-W) - \dim(X) = 1$, we obtain that~${X \fwcover -W}$ by \autoref{prop:covers_for_fwo}.
	
	Assume now by contradiction that~$Y \not\fwle -W$.
	Then there exists~$H \in \arrangement$ such that~$Y(H) < -W(H)$.
	Since~$W(H) \ge 0$, it implies that~$Y(H) = -$ and $W(H) = 0$.
    But since~$Y_{-Z}(H) \ge 0$, we obtain by definition of reorientation that~${Z(H) = 0}$ and~${Y_{-Z}(H) = +}$.
	We conclude that~$W(H) = Z(H) = 0$ while~$Y_{-Z} = +$, contradicting the fact that~$Y_{-Z}$ is the $2$-dimensional face with rays~$W$ and~$Z$.
\end{proof}

\begin{lemma}
    \label{lemma:W-X_is_join}
    We have $X \fwjoin Y = -W$.
\end{lemma}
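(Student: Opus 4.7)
The plan is to combine the upper-bound established in \autoref{lemma:W-X_is_join_cand} (namely $X \fwle -W$ and $Y \fwle -W$) with a geometric analysis inside $-B$ to show that any common upper bound of $X$ and $Y$ already dominates $-W$. So I would take an arbitrary $U \in \faces$ with $X \fwle U$ and $Y \fwle U$, and aim to prove $-W \fwle U$.

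First I would locate $U$ as a face of $-B$ and pin down its sign at $H_2$. Since $X = \{0\}$ has identically zero covector, the covector description of the facial weak order from \autoref{thm:equivalence1} applied to $X \fwle U$ gives $U(H) \le 0$ for every $H \in \arrangement$, which via the $-B$-analogue of \autoref{cor:containment_B} yields $U \subseteq -B$. Likewise, \autoref{lem:covec_Y_Z_W_W-X_comparisons} supplies $Y(H_2) = -$, and then $Y \fwle U$ forces $U(H_2) = -$. In particular, $U$ is not contained in the hyperplane $H_2$.

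The second ingredient is the simpliciality of $-B$: since $\pr$ is a lattice, \autoref{thm:simplicial_implies_lattice} makes $B$ simplicial, and $-B$ shares the bounding hyperplanes of $B$, so it is simplicial as well. In a simplicial cone, the rays are in bijection with the walls, and a face contains the ray opposite a wall $H_*$ if and only if the face is not contained in $H_*$. Reading off \autoref{lem:covec_Y_Z_W_W-X_comparisons}, $-W$ is precisely the ray of $-B$ opposite $H_2$ (the unique one with $-W(H_2) = -$ and $-W(H) = 0$ for all other boundary hyperplanes of $B$). Since $U$ is a face of $-B$ with $U \not\subseteq H_2$, this forces $-W \subseteq U$.

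Finally, $-W$ and $U$ are both faces of $-B$, so the second half of \autoref{cor:fwle_B} converts the set containment $-W \subseteq U$ into the facial weak order inequality $-W \fwle U$, completing the proof that $X \fwjoin Y = -W$. The only delicate point is the combinatorial description of the face lattice of a simplicial cone invoked in the previous paragraph; everything else is a direct application of the previously established lemmas and corollaries.
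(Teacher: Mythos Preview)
Your proof is correct and follows essentially the same approach as the paper's: both take an arbitrary common upper bound $U$, use $X \fwle U$ to get $U(H) \le 0$ for all $H$ (so $U$ is a face of $-B$), use $Y \fwle U$ to get $U(H_2) = -$, deduce $-W \subseteq U$, and conclude via \autoref{cor:fwle_B}. You spell out the step $-W \subseteq U$ via the face-lattice combinatorics of the simplicial cone $-B$, whereas the paper asserts this inclusion directly; the extra detail is fine and the argument is sound.
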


\begin{proof}
	Consider a face~$U$ of~$\faces$ such that~$X \fwle U$ and~$Y \fwle U$.
    We have that~${U(H) \le X(H) = 0}$ for all~$H \in \arrangement$ and, moreover,~$U(H_2) \le Y(H_2) = -$.
	Therefore, we obtain that~$U$ is a face of~$-B$ and~$-W \subseteq U$.
	We conclude that~$-W \fwle U$ by \autoref{cor:fwle_B}.
\end{proof}


\subsection{Further lattice properties of the facial weak order}

We end this section by describing some lattice properties of the facial weak order.
In particular we show that the lattice is self-dual, show the poset of regions is a sublattice, describe all the join-irreducible elements and show semidistributivity.


\subsubsection{Duality}
\label{subsubsec:duality}

Recall that the \definition{dual} of a lattice $(L, \le)$ is the order $(L, \le^{\op})$ where for $u,v \in L$, we have $u \le v$ if and only if $v \le^{\op} u$.
A lattice is \definition{self-dual} if it is isomorphic to its dual.
As with the poset of regions, the facial weak order is self-dual.
This follows from the fact that the poset of regions is itself self-dual and from the fact that the negative of every covector must also be in the set of covectors by the definition of oriented matroid.

\begin{proposition}
    \label{prop:selfdual_fwo}
    The map ${F \mapsto -F \coloneqq \set{-v}{v \in F}}$ is a self-duality of the facial weak order $\fw$.
\end{proposition}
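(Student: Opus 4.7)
The plan is to exploit the covector description of the facial weak order from \autoref{thm:equivalence1}, where $F \fwle G$ if and only if $G(H) \le F(H)$ for every $H \in \arrangement$ with the sign order $- < 0 < +$. Under this description, the negation map on faces corresponds to the negation map on covectors, so the self-duality statement becomes transparent.

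First I would verify that $F \mapsto -F$ is a well-defined involution on $\faces$. Since $\faces$ is identified with $\covectors(\arrangement)$ via the face sign map $\hat\sigma$, and since the oriented matroid axiom \autoref{enum:covec_def_1} in \autoref{defn:oriented_matroid} guarantees that $-F \in \covectors(\arrangement)$ whenever $F \in \covectors(\arrangement)$, the map is a well-defined self-map of $\faces$. It is clearly an involution because negation of signs is an involution, hence a bijection.

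Next I would show that the map reverses the order. Fix $F, G \in \faces$. By \autoref{thm:equivalence1}, $F \fwle G$ is equivalent to $G(H) \le F(H)$ for every $H \in \arrangement$. Applying the sign-reversing involution $s \mapsto -s$ on $\{-,0,+\}$, which reverses the natural order, this is in turn equivalent to $-F(H) \le -G(H)$ for every $H \in \arrangement$, namely $(-G)(H) \le (-F)(H)$ for every $H \in \arrangement$. Invoking \autoref{thm:equivalence1} a second time, this is precisely $-G \fwle -F$. Combining this with the involutivity yields the self-duality.

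There is essentially no obstacle here: once the covector characterization \autoref{item:fwo_covects} is available, the argument is purely a sign-flip and the oriented matroid axiom that $\covectors(\arrangement)$ is closed under negation. The only small point worth flagging is to make explicit that negation on covectors corresponds to the geometric negation $F \mapsto \set{-v}{v \in F}$ on faces, which is immediate from the definition of $\hat\sigma$ since $\sigma_H(-v) = -\sigma_H(v)$ for every $v \in V$ and every $H \in \arrangement$.
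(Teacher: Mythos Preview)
Your argument is correct and is essentially the approach the paper has in mind: the paper gives no formal proof, only the one-line remark preceding the proposition that the result ``follows from the fact that the poset of regions is itself self-dual and from the fact that the negative of every covector must also be in the set of covectors,'' and your write-up via \autoref{thm:equivalence1}\autoref{item:fwo_covects} makes the covector half of that remark precise. The reference to self-duality of $\pr$ in the paper's sentence is not actually needed once one works with covectors, so your streamlined version loses nothing.
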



\subsubsection{Sublattice}

In this subsection we show that if $\arrangement$ is simplicial, not only is~$\pr$ an induced subposet of~$\fw$ by \autoref{rem:subposet} and a lattice by \autoref{thm:simplicial_implies_lattice}, but it is in fact a sublattice of $\fw$.
Recall that a \definition{sublattice} $L'$ of a lattice~$L$ is an induced subposet such that~${u \join v \in L'}$ and $u \meet v \in L'$ for any~$u, v \in L'$.
The proof requires the following lemma which, just like the BEZ lemma, gives us a local way to verify if a subposet is a sublattice of a lattice, see \cite[Lemma 9-2.11]{Reading_LatticeTheoryChapter9}.
Recall that a poset is connected when the transitive closure of its comparability relation forms a single equivalence class.

\begin{lemma}
    \label{lem:sublattice_of_lattice}
    If~$P$ is a connected finite induced subposet of a lattice~$L$ such that~${x \join y \in P}$ for all $x, y, z \in P$ with $z \cover x$ and $z \cover y$, and $x \meet y \in P$ for all~${x, y, z \in P}$ with $x \cover z$ and $y \cover z$, then $P$ is a sublattice of $L$.
\end{lemma}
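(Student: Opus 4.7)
The hypothesis of the lemma is self-dual: interchanging the roles of join and meet amounts to replacing $L$ by $L^{\op}$ and $P$ by its image there, which again satisfies the two cover hypotheses (with their roles swapped). Hence it suffices to prove that $u \join_L v \in P$ for every $u, v \in P$; the meet statement follows by running the same argument in $L^{\op}$.

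For the join statement, the plan is to argue by contradiction and induction. Assume some pair $u,v \in P$ has $w := u \join_L v \notin P$, and choose such a counterexample minimizing the length of a longest chain in the interval $[u \meet_L v,\, w]_L$; in particular $u$ and $v$ are incomparable in $P$. Using the connectedness of $P$, fix a shortest zigzag $u = x_0, x_1, \ldots, x_n = v$ whose consecutive elements are related by a cover in $P$. The key step is to analyze the local shape of this zigzag: at any peak $x_i \lessdot_P x_{i+1} \gtrdot_P x_{i+2}$ the ``covers from above'' hypothesis (applied with $z = x_{i+1}$) produces $x_i \meet_L x_{i+2} \in P$, and at any valley $x_i \gtrdot_P x_{i+1} \lessdot_P x_{i+2}$ the ``covers from below'' hypothesis dually produces $x_i \join_L x_{i+2} \in P$. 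Since the associated interval $[x_i \meet_L x_{i+2},\, x_i \join_L x_{i+2}]_L$ is contained in $[u \meet_L v, w]_L$, and is strictly smaller unless the join (resp.\ meet) equals $x_{i+1}$, the induction hypothesis applied to the subpair $(x_i, x_{i+2})$ lets me replace the three vertices at a peak or valley by a chain in $P$ through the newly produced element. Iterating, I smooth the zigzag to a monotone chain in $P$, forcing $u$ and $v$ to be comparable and hence $w \in \{u,v\} \subseteq P$, a contradiction.

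The main obstacle is making the smoothing step genuinely reduce a well-chosen invariant: replacing a peak $x_{i+1}$ by $y := x_i \meet_L x_{i+2}$ converts the peak into a valley in the modified zigzag, so the zigzag length may even grow, and induction on length alone will not terminate. The correct invariant is the lexicographic pair consisting of the chain length of $[u \meet_L v, w]_L$ together with the zigzag length, and one must verify case by case that each local replacement strictly decreases this pair while preserving the applicability of the inductive hypothesis to the smaller subpairs. A related subtlety is that the new element $y$ need not itself cover $x_i$ or $x_{i+2}$ in $P$, so one must extend to a maximal chain through $y$ in $P$ before the zigzag can be rebuilt; verifying that such chains exist is the reason the argument proceeds by induction on the interval length in $L$ rather than a purely local reduction.
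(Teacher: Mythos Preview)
The paper does not prove this lemma itself; it is quoted as \cite[Lemma~9-2.11]{Reading_LatticeTheoryChapter9} and used as a black box. So there is no in-paper argument to compare against, and your proposal must stand on its own.

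It does not. The step that breaks is the containment assertion ``the associated interval $[x_i \meet_L x_{i+2},\, x_i \join_L x_{i+2}]_L$ is contained in $[u \meet_L v,\, w]_L$.'' A shortest zigzag in the comparability graph of $P$ from $u$ to $v$ is under no obligation to stay inside the $L$-interval $[u \meet_L v,\, u \join_L v]$; connectedness of $P$ says nothing about where the intermediate $x_j$ sit in $L$. Consequently neither $x_i \join_L x_{i+2}$ nor $x_i \meet_L x_{i+2}$ need lie in that interval, and your minimality hypothesis on the chain length of $[u \meet_L v,\, w]_L$ gives no leverage on the subpair $(x_i, x_{i+2})$. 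Without that containment, the induction never fires and the smoothing procedure has no reason to terminate.

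The closing paragraph of your proposal already concedes as much: you note that replacing a peak by a meet turns it into a valley and can lengthen the zigzag, and you list the verifications that would be needed without carrying them out. That is a description of obstacles, not a proof. The argument in Reading's reference does not try to smooth an arbitrary connectedness zigzag; it instead works with a carefully chosen minimal counterexample pair and uses the cover hypotheses to manufacture a strictly smaller counterexample \emph{within} an interval that is controlled from the start, so that the new elements produced are guaranteed to lie where the induction can see them. If you want to repair your approach, you would need to replace the free zigzag with a path whose vertices are forced to stay in $[u \meet_L v,\, w]_L$; but producing such a path already requires knowing something close to the conclusion.
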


With this tool, we can prove the following statement.

\begin{proposition}
    \label{prop:sublattice}
    For a simplicial arrangement $\arrangement$, the lattice of regions is a sublattice of the facial weak order~$\fw$.
\end{proposition}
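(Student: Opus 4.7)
The plan is to show directly that for any two regions $R, R' \in \regions$, the join (respectively meet) of $R$ and $R'$ in $\fw$ coincides with their join (respectively meet) in $\pr$. Since $\pr$ is a lattice by \autoref{thm:simplicial_implies_lattice} and is an induced subposet of $\fw$ by \autoref{rem:subposet}, this will immediately imply that $\pr$ is a sublattice of $\fw$. One could equivalently run \autoref{lem:sublattice_of_lattice}, but the key computation is the same either way.

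The core observation is that for a region $R \in \regions$ and an arbitrary face $U \in \faces$ with facial interval $[m_U, M_U]$, since $m_R = M_R = R$ and $m_U \prle M_U$, each of the two conditions in \autoref{def:fwo_lower_upper_related} collapses to a single inequality, yielding
\[
R \fwle U \iff R \prle m_U \qquad \text{and} \qquad U \fwle R \iff M_U \prle R.
\]

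Set $R^* \coloneqq R \prjoin R'$ and consider an arbitrary face $U \in \faces$ with $R \fwle U$ and $R' \fwle U$. By the observation, $R \prle m_U$ and $R' \prle m_U$, so $R^* \prle m_U$ in $\pr$, and hence $R^* \fwle m_U$ in $\fw$ by \autoref{rem:subposet}. Combined with the trivial inequality $m_U \fwle U$, this gives $R^* \fwle U$. Since also $R, R' \fwle R^*$ (applying \autoref{rem:subposet} to $R, R' \prle R^*$), we conclude $R^* = R \fwjoin R'$, which lies in $\pr$. The meet case is handled symmetrically using the second half of the observation, or can be deduced from the self-duality \autoref{prop:selfdual_fwo}. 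The entire argument hinges on the one-line observation above, which is forced by the redundancy of \autoref{def:fwo_lower_upper_related} when one of the compared elements is a region; no serious obstacle is expected.
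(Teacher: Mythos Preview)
Your proof is correct, and it is genuinely different from---and more elementary than---the argument in the paper.

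The paper invokes \autoref{lem:sublattice_of_lattice} and then, for three regions $X, Y, Z$ with $Z \prcover X$ and $Z \prcover Y$, carries out a geometric argument: it passes to the rank~$2$ subarrangement $\arrangement_W$ with $W = Z \cap H_X \cap H_Y$, identifies the join of $X_W$ and $Y_W$ there as the region opposite to the base, and then pulls this back via \autoref{prop:look_inside_subarr} to conclude that $X \fwjoin Y$ is a region. By contrast, your argument bypasses the local criterion and the subarrangement machinery entirely: the observation that $R \fwle U \iff R \prle m_U$ (and dually) follows in one line from \autoref{def:fwo_lower_upper_related} and immediately yields $R \fwjoin R' = R \prjoin R'$ for \emph{all} pairs of regions, not just those covering a common element. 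Your proof also makes transparent that the conclusion holds whenever $\pr$ is a lattice, simpliciality being used only through \autoref{thm:simplicial_implies_lattice}; the paper's geometric argument, on the other hand, explicitly uses that the region $Z$ is simplicial to guarantee $\codim(W) = 2$. The paper's route is consistent with the local-analysis framework developed in \autoref{sec:lattice}, but for this particular proposition your direct approach is both shorter and slightly more general.
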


\begin{proof}
	\begin{figure}[b]
\DeclareDocumentCommand{\rs}{ O{1.1cm} O{->} m m O{0} O{0}} {
	\def \radius {#1}
	\def \inputPoints{#3}
	\def \excludeRoots{#4}
	\def \style {#2}
	\def \initialRotation {#5}
	\def \extraStyle {#6}

	\pgfmathtruncatemacro{\points}{\inputPoints * 2}
	\pgfmathsetmacro{\degrees}{360 / \points}
	
	\coordinate (0) at (0,0);
	
	\foreach \x in {1,...,\points}{%
		\pgfmathsetmacro{\location}{(\points+(\x-1))*\degrees + \initialRotation}
		
		\coordinate (\x) at (\location:\radius);
	}

	\ifthenelse{\equal{\excludeRoots}{}}{
		\foreach \x in {1,...,\points}{%
			\ifthenelse{\equal{\extraStyle}{0}}{
				\draw[\style] (0) -- (\x);
			}{
			\draw[\style, \extraStyle] (0) -- (\x);
		}
	}
}{
\foreach \x in {1,...,\points}{%
	\edef \showPoint {1};
	
	\foreach \y in \excludeRoots {
		\ifthenelse{\equal{\x}{\y}}{
			\xdef \showPoint {0};
		}{}
	}
	
	\ifthenelse{\equal{\showPoint}{1}}{
		\ifthenelse{\equal{\extraStyle}{0}}{
			\draw[\style] (0) -- (\x);
		}{
		\draw[\style, \extraStyle] (0) -- (\x);
	}
}{}
}
}  
}

\centerline{
    \begin{tikzpicture}
	    [
	    fdomain/.style={fill=blue!15!white,color=blue!15!white,opacity=0.3},
	    vdomain/.style={color=blue!85!white},
	    face/.style={draw=red!95!black,fill=red!95!black,color=red!95!black,ultra thick, -},
	    vertex/.style={inner sep=1pt,circle,draw=green!85!white,fill=green!85!white,thick},
	    vtext/.style={color=green!55!black},
	    ]
	    %
	    %
	    %
        \rs[3.5][ultra thick]{4}{}[22.5]
	    \node[above left] at (2) {$H_X$};
	    \node[above right] at (3) {$H_Y$};
        \rs[3][dashed]{4}{}[22.5]
	    %
	    %
	    \fill[fdomain] (0) -- (6) -- (7) -- cycle {};
        \node[vdomain] at (270:2.2) {$Z$};
	    \fill[fdomain] (0) -- (5) -- (6) -- cycle {};
	    \node[vdomain] at (225:2.2) {$X$};
        \fill[fdomain] (0) -- (7) -- (8) -- cycle {};
        \node[vdomain] at (315:2.2) {$Y$};
        \fill[fdomain] (0) -- (2) -- (3) -- cycle {};
        \node[vdomain] at (90:2.2) {$V$};
	    \node[] at (180:2.2) {$\vdots$};
	    \node[] at (0:2.2) {$\vdots$};
	    \node[vertex] at (0) {};
        \node[vtext] at (45:0.6) {$W$};
    \end{tikzpicture}
}
	    \caption{The construction of the join when $X$ and $Y$ are regions.}
		\label{fig:JoinSublattice}
	\end{figure}
	
	By \autoref{rem:subposet}, $\pr$ is an induced subposet of~$\fw$.
	It is clearly connected as it contains the minimal and maximal elements of~$\fw$.
	Finally, by \autoref{prop:selfdual_fwo}, we just need to prove one of the two criteria of \autoref{lem:sublattice_of_lattice}.
    Consider thus three distinct regions $X, Y, Z \in \regions$ such that $Z \prcover X$ and $Z \prcover Y$.
    See \autoref{fig:JoinSublattice} for a (rank $2$) example.
    Since $Z \prcover X$, there is a hyperplane~$H_X$ separating $X$ and $Z$ such that $S(X) = S(Z) \cup \{H_X\}$.
    Similarly, there is a hyperplane~$H_Y$ separating~$Y$ and~$Z$ such that~$S(Y) = S(Z) \cup \{H_Y\}$.
    Since~$Z$ is simplicial, the face~$W \coloneqq Z \cap H_X \cap H_Y$ has codimension~$2$.
    We thus consider the rank~$2$ subarrangement~$\arrangement_W$.
    Since~$Z(H_X) = Z(H_Y) = +$, the face $Z_W$ is the base region of~$\arrangement_W$.
    Moreover, since~$X_W(H_X) = X(H_X) = -$ and~$Y_W(H_Y) = Y(H_Y) = -$, the join~$V$ of~$X_W$ and~$Y_W$ in~$\arrangement_W$ satisfies~$V(H_X) = V(H_Y) = -$ and is thus the opposite of the base region in~$\arrangement_W$.
    By \autoref{prop:look_inside_subarr}, the join of~$X$ and~$Y$ is the face~$U$ of~$\arrangement$ containing~$W$ and such that~$U_W = V$.
    We conclude that~$X \join Y$ is full dimensional, thus is a region.
    This concludes the proof by \autoref{lem:sublattice_of_lattice} and \autoref{prop:selfdual_fwo}.
\end{proof}


\subsubsection{Join-irreducible elements}

We next aim to find all the join-irreducible elements of the facial weak order.
An element $x$ of a finite lattice $L$ is \definition{join-irreducible} if $x \ne \bigvee L'$ for all $L' \subseteq L \setm \{x\}$.
Equivalently, $x$ is join-irreducible if and only if it covers exactly one element $x_\star$ of~$L$.
A \definition{meet-irreducible} element $y$ is defined in a similar manner where $y^\star$ is the unique element covering $y$.

For ease of notation, we denote by~$\ji(\FW)$ and~$\ji(\PR)$ (resp.~$\mi(\FW)$ and~$\mi(\PR)$) the sets of join-irreducible (resp.~meet-irreducible) elements in the facial weak order and in the poset of regions.

It turns out that the join-irreducible elements of the facial weak order are characterized by the join-irreducible elements of the poset of regions.
Each region~${R\in \ji(\PR)}$ gives a join-irreducible face $R$ in the facial weak order.
Additionally, the facet between $R$ and the unique region $R_\star$ it covers in the poset of regions is also a join-irreducible element in the facial weak order.
We give a small lemma before characterizing the join-irreducible elements in the facial weak order of a simplicial arrangement.

\begin{lemma}
    \label{lem:min_num_cov}
    Suppose $\arrangement$ is a simplicial hyperplane arrangement and $F$ a face of the arrangement.
    There exists exactly $\codim(F)$ facets of $F$ weakly below $F$ in the facial weak order.
\end{lemma}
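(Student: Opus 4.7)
The plan is to interpret a \emph{facet of $F$} as a face $G$ of $\arrangement$ for which $F$ is itself a facet, that is, $F \subsetneq G$ with $\dim G = \dim F + 1$, and to count those satisfying $G \fwle F$. By \autoref{prop:two_implies_third}, whenever $F \subseteq G$ the conditions $G \fwle F$ and $m_G = m_F$ are equivalent, so the problem reduces to counting the faces $G$ satisfying $F \subsetneq G$, $\dim G = \dim F + 1$, and $m_G = m_F$.

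I then pass to the subarrangement $\arrangement_F$ of hyperplanes containing $F$. Since $\arrangement$ is simplicial, \autoref{thm:simplicial_implies_lattice} ensures that $\pr$ is a lattice, so \autoref{prop:facial_int_are_simplicial} applies: $\rank(\arrangement_F) = \codim(F)$ and $\pr[\arrangement_F][B_F]$ is a lattice whose base region $B_F$ is simplicial. The projection $\pi_F$ restricts to a dimension-preserving bijection from $\{G \in \faces : F \subseteq G\}$ onto $\faces[\arrangement_F]$, sending $F$ itself to the minimum-dimension face $F_F$ of $\arrangement_F$. Hence the faces $G$ of interest correspond bijectively to the faces of $\arrangement_F$ of dimension one above $F_F$.

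To translate the condition $m_G = m_F$ to this setting, I apply \autoref{lem:localization} with $Z = F$ to the facial interval $[m_F, M_F]$; the remark immediately after that lemma identifies $m_F$ with $B_F$ (and $M_F$ with $-B_F$) under the resulting isomorphism of intervals. Consequently $m_G = m_F$ in $\pr$ translates into $m_{G_F} = B_F$ in $\pr[\arrangement_F][B_F]$, which amounts to saying that $G_F$ is a face of the base region $B_F$. Therefore the count in question equals the number of faces of $B_F$ one dimension above $F_F$, namely the rays of the simplicial region $B_F$. Since $B_F$ is simplicial of rank $\codim(F)$, it has exactly $\codim(F)$ such rays, giving the claim. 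The only delicate step is the transfer of $m_G = m_F$ to the intrinsic statement ``$G_F$ is a face of $B_F$'' via the interval isomorphism of \autoref{lem:localization}; the remainder is an immediate assembly of previously established bijections and the simpliciality of $B_F$.
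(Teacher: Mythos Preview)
Your proof is correct and follows essentially the same route as the paper's own argument: both pass to the localization $\arrangement_F$, invoke \autoref{prop:facial_int_are_simplicial} to get that $B_F$ is simplicial, and then count the $\codim(F)$ rays of $B_F$. The paper phrases this count directly in terms of covectors (the faces $G$ with $G(H)=+$ on exactly one bounding hyperplane of $B_F$), while you take the slightly more structural path through \autoref{prop:two_implies_third} (to rewrite $G \fwle F$ as $m_G = m_F$) and \autoref{lem:localization} (to transport this to $G_F \subseteq B_F$); the substance is the same.
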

\begin{proof} 
    If $F$ is a codimension $\codim(F)$ face then its span is the intersection of at least $\codim(F)$ hyperplanes.
    Since~$\arrangement$ is simplicial, exactly~$\codim(F)$ of these hyperplanes bound the base region of~$\arrangement_F$.
    Let $\mathcal{H}$ be this set of bounding hyperplanes.
    For each $H \in \mathcal{H}$ there exists a unique face $G$ such that $G(H) = +$ and $G(H') = 0$ for all $H' \in \mathcal{H} \setm \left\{ H \right\}$ since the base region must be simplicial by \autoref{prop:facial_int_are_simplicial}.
    In other words, there exists exactly $\codim(F)$ many codimension $\codim(F)-1$ faces covered by $F$ in the facial weak order.
\end{proof}

\begin{proposition}
    \label{prop:join_irr}
    Suppose $\arrangement$ is a simplicial hyperplane arrangement and let $F$ be a face with associated facial interval $[m_F, M_F]$.
    Then~${F \in \ji(\FW)}$ if and only if~${M_F \in \ji(\PR)}$ and~${\codim(F) \in \left\{ 0,1 \right\}}$.
\end{proposition}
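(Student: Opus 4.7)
The strategy is to count the number of faces $G$ covered by $F$ in the facial weak order (i.e., with $G \fwcover F$) and show this count equals one precisely when $M_F \in \ji(\PR)$ and $\codim F \in \{0,1\}$. By \autoref{cor:fwo_covers}, these covers split into type~(A), consisting of codimension-$1$ subfaces $G \subsetneq F$ with $M_G = M_F$, and type~(B), consisting of faces $G \supsetneq F$ with $\dim G = \dim F + 1$ and $m_G = m_F$. By \autoref{lem:min_num_cov}, the number of type~(B) covers equals $\codim F$; hence $\codim F \ge 2$ already forces at least two covers, proving the necessity of $\codim F \in \{0,1\}$.

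For $\codim F = 0$, $F$ is a region with $M_F = F$, so type~(B) contributes nothing. The type~(A) covers are codimension-$1$ facets $G = F \cap H$ of the simplicial region~$F$, and the facial interval $[m_G, M_G]$ consists of~$F$ and the region~$F'$ on the other side of~$H$. Thus $M_G = F$ iff $F' \prle F$ iff $H \in S(F)$, which corresponds bijectively to $\pr$-down-covers of~$F$. Hence the total cover count equals the number of $\pr$-down-covers of $F = M_F$, giving $F \in \ji(\FW) \iff M_F \in \ji(\PR)$.

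For $\codim F = 1$, let $H_F$ be the unique hyperplane containing $F$, so that $F = M_F \cap H_F$. Type~(B) contributes exactly one cover, namely $G = m_F$, since the only regions containing~$F$ are $m_F$ and~$M_F$. For type~(A), the simpliciality of~$M_F$ (from \autoref{prop:facial_int_are_simplicial}) implies that its facet hyperplanes are $H_F, H_1, \dots, H_{n-1}$ with $n = \dim V$, and the facets of $F$ are precisely the ridges $M_F \cap H_F \cap H_i = F \cap H_i$ of~$M_F$ for $i = 1, \dots, n-1$. For such $G = F \cap H_i$, a direct covector computation via \autoref{lem:covector_and_sepSet} gives $S(M_G) = S(M_F) \cup \{H_i\}$, so $M_G = M_F$ iff $H_i \in S(M_F)$. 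Since $H_F$ itself lies in $S(M_F)$ but is excluded from the hyperplanes giving facets of~$F$, the type~(A) count equals (number of $\pr$-down-covers of~$M_F$)~$-\,1$. Adding the single type~(B) cover yields a total $\fw$-cover count equal to the number of $\pr$-down-covers of~$M_F$, which is~$1$ iff $M_F \in \ji(\PR)$.

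The main obstacle lies in the $\codim F = 1$ analysis: one must carefully identify the facets of~$F$ (as faces of the arrangement) with the ridges of the simplicial cone~$M_F$ containing~$F$ as a facet -- which uses simpliciality of the arrangement to rule out contributions from hyperplanes not bounding~$M_F$ -- and then verify the arithmetic by which the $-1$ correction in type~(A) is exactly offset by the single type~(B) cover, yielding the count of $\pr$-down-covers of~$M_F$.
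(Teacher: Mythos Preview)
Your overall strategy is sound and in fact cleaner than the paper's: rather than proving each direction separately by contradiction, you compute the exact number of $\fwcover$-covers of $F$ and show it equals the number of $\prcover$-covers of $M_F$ whenever $\codim F\in\{0,1\}$. The paper uses essentially the same ingredients (\autoref{lem:min_num_cov} and the analysis of bounding hyperplanes of $M_F$), but splits into forward and backward implications with ad hoc case analysis; your unified count is more transparent.

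There is, however, one genuine gap in the $\codim F=1$ case. Your claim that ``a direct covector computation via \autoref{lem:covector_and_sepSet} gives $S(M_G)=S(M_F)\cup\{H_i\}$'' is false in general. For $G=F\cap H_i$ one has $G(H)=0$ for every $H\in\arrangement_G$ (the set of all hyperplanes containing the codimension-$2$ flat $H_F\cap H_i$), so in fact
\[
S(M_G)=S(M_F)\cup\arrangement_G,
\]
and $\arrangement_G$ may strictly contain $\{H_F,H_i\}$ even in simplicial arrangements (e.g.\ in type $A_3$ there are ridges with three hyperplanes through them). Thus ``$M_G=M_F\iff H_i\in S(M_F)$'' does not follow from your formula alone.

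The equivalence is nonetheless true, and the fix is short. One always has $M_F\prle M_G$, so $M_G=M_F$ iff $\arrangement_G\subseteq S(M_F)$. Localize at $G$: the rank-$2$ arrangement $\arrangement_G$ has $(M_F)_G$ as a region whose two bounding hyperplanes are exactly $H_F$ and $H_i$ (these are the two walls of the simplicial region $M_F$ through $G$). Since $H_F\in S(M_F)$ always, $(M_F)_G$ is on the negative side of both of its walls iff $H_i\in S(M_F)$; and in a rank-$2$ arrangement the unique region negative on both walls is $-B_G$, which is negative on \emph{all} hyperplanes of $\arrangement_G$. Hence $H_i\in S(M_F)$ implies $\arrangement_G\subseteq S(M_F)$, and the converse is trivial. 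With this patch your count goes through: the type-(A) covers are in bijection with the $H_i$ in $S(M_F)$, giving $(\text{number of }\pr\text{-down-covers of }M_F)-1$, and the single type-(B) cover restores the desired total.
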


\begin{proof}
    We first suppose that $F$ is join-irreducible in $\fw$.
    Since a join-irre\-ducible element can cover at most one element, \autoref{lem:min_num_cov} implies ${\codim(F) \leq 1}$.

    Suppose first that $\codim(F) = 0$.
    Then $F$ is a region and $m_F = M_F = F$.
    Let~$F_\star$ be the unique face covered by $F$.
    By \autoref{cor:fwo_covers},~${\abs{\dim(F) - \dim(F_\star)} = 1}$ and therefore~${\codim(F_\star) = 1}$.
    Therefore, there exists a unique hyperplane $H$ bounding~$M_F$ such that $H \in S(M_F)$ and $H \cap M_F = F_\star$.
    Thus, there is a unique region~$R$ such that $S(R) = S(M_F) \setm \left\{ H \right\}$.
    In other words, $M_F \in \ji(\PR)$.

    Suppose next that $\codim(F) = 1$.
    Again by \autoref{cor:fwo_covers}  only codimension~$0$ and codimension~$2$ faces can be covered by $F$ in the facial weak order.
    By \autoref{lem:min_num_cov} there exists at least one codimension~$0$ face covered by $F$.
    Therefore $F_\star$ is a region and, as $F$ is join-irreducible, $F$ does not cover any codimension~$2$ face.
    If contrarily $M_F \notin \ji(\PR)$ then there exists a boundary hyperplane $H$ of $M_F$ such that $H \cap M_F \ne F$.
    Let $G = H \cap M_F$.
    Then $G \cap F$ is a face with codimension $2$ such that $G \cap F \subseteq F$ and $M_F = M_G$.
    Thus $G \cap F$ is a codimension $2$ face covered by $F$, a contradiction.

    To show the other direction, we conversely suppose that $M_F \in \ji(\PR)$ and $\codim(F) \in \{0,1\}$.
    Since $M_F \in \ji(\PR)$ it covers the unique region ${M_F}_\star$ and there is a unique face $G$ between the two regions with facial interval $[{M_F}_\star,M_F]$.
    If~${\codim(F) = 0}$ then $F = M_F$ and, since only codimension $1$ faces can be covered by $F$, then $G$ is the unique facet of $F$ which is covered by~$F$, \ie $F \in \ji(\FW)$.
    If $\codim(F) = 1$ then $F = G$ and ${M_F}_\star \fwcover F$ by construction.
    To prove $F$ doesn't cover another face it suffices to observe that if there was another face $G'$ covered by $F$ it must be of codimension $2$ by \autoref{lem:min_num_cov}.
    But then, $M_F \cap G' = G'$ since~${G' \subseteq F \subseteq M_F}$.
    In other words, there exists a second facet to $M_F$ weakly below~$M_F$ by simpliciality, a contradiction.
\end{proof}

As we saw previously, these join-irreducibles come in pairs.
This comes from introducing the edges of the poset of regions as vertices in the facial weak order.

\begin{corollary}
    \label{cor:join_irr_pair}
    Let $F$ and $F'$ be faces of codimension $1$ and $0$ respectively such that $F \fwcover F'$.
    Then $F$ is join-irreducible in the facial weak order if and only if~$F'$ is join-irreducible in the facial weak order.
\end{corollary}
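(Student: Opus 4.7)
My plan is to derive the corollary directly from Proposition~4.13 (\textsc{prop:join\_irr}) by first extracting enough information from the cover relation $F \fwcover F'$. Concretely, I will show that $M_F = M_{F'} = F'$, after which Proposition~4.13 applies to each of the two faces separately and both join-irreducibility conditions collapse to the same statement, namely $F' \in \ji(\PR)$.

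First I would unpack the hypothesis using Corollary~3.9 (\textsc{cor:fwo\_covers}), which offers two symmetric options for a cover $F \fwcover F'$: either $F \subseteq F'$ with $M_F = M_{F'}$, or $F' \subseteq F$ with $m_F = m_{F'}$. Since $\dim F' = \dim F + 1 > \dim F$, the face $F'$ cannot be contained in $F$, so only the first option is available; hence $F \subseteq F'$ and $M_F = M_{F'}$. Because $F'$ is a region we also have $m_{F'} = M_{F'} = F'$, and therefore in particular $M_F = F'$.

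Next I would apply Proposition~4.13 to each face. For the codimension~$0$ face $F'$, the codimension hypothesis is satisfied, so the proposition gives $F' \in \ji(\FW)$ iff $M_{F'} \in \ji(\PR)$, i.e.\ iff $F' \in \ji(\PR)$. For the codimension~$1$ face $F$, the same proposition gives $F \in \ji(\FW)$ iff $M_F \in \ji(\PR)$; combined with $M_F = F'$ this is $F \in \ji(\FW)$ iff $F' \in \ji(\PR)$. Chaining the two equivalences yields $F \in \ji(\FW)$ iff $F' \in \ji(\FW)$, as required.

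There is no substantial obstacle here: once the dimension constraint pins down the first branch of Corollary~3.9, the equality $M_F = M_{F'} = F'$ lets Proposition~4.13 do all the work. The content of the corollary is essentially the observation that a facet $F$ and the unique region $M_F$ on its positive side share the same $M$-invariant in the facial weak order, so join-irreducibility is transferred from one to the other via the common condition $M_F \in \ji(\PR)$.
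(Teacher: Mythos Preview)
Your proof is correct and follows essentially the same approach as the paper: both arguments extract $M_F = M_{F'}$ from the cover relation $F \fwcover F'$ and then invoke \autoref{prop:join_irr} to reduce each join-irreducibility statement to the common condition $M_{F'} \in \ji(\PR)$. Your version is in fact slightly more explicit in justifying $M_F = M_{F'}$ via \autoref{cor:fwo_covers} and the dimension constraint.
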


\begin{proof}
    If $F$ is a codimension $1$ face then there exists a unique codimension $0$ face covering it.
    Conversely, for every codimension $0$ face (excluding the base region) there is at least one codimension $1$ face covered by it.
    In other words, $F$ exists if and only if~$F'$ exists (where $F'$ is not the base region).
    Furthermore, $F \fwcover F'$ implies $F$ is the face strictly below the region $F'$.
    In other words, $M_F = M_{F'}$.
    But this implies $M_F \in \ji(\PR)$ if and only if $M_{F'} \in \ji(\PR)$.
    Since $\codim(F) = 1$ and $\codim(F') = 0$ this implies $F$ is join-irreducible if and only if~$F'$ is join-irreducible.
\end{proof}

Recalling that our lattice is self-dual by \autoref{prop:selfdual_fwo} we have the following two corollaries.

\begin{corollary}
    \label{cor:meet_irr}
    Suppose $\arrangement$ is a simplicial arrangement and let $F$ be a face with associated facial interval $[m_F, M_F]$.
    Then $F \in \mi(\FW)$ if and only if~${m_F \in \mi(\PR)}$ and $\codim(F) \in \left\{ 0,1 \right\}$.
\end{corollary}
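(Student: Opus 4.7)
The plan is to deduce this corollary from \autoref{prop:join_irr} by transporting everything through the self-duality $F \mapsto -F$ of the facial weak order (see \autoref{prop:selfdual_fwo}) together with the self-duality $R \mapsto -R$ of the poset of regions (see \autoref{prop:por_selfdual}). Since a self-duality swaps join-irreducibles with meet-irreducibles, we have $F \in \mi(\FW)$ if and only if $-F \in \ji(\FW)$, and similarly $m_F \in \mi(\PR)$ if and only if $-m_F \in \ji(\PR)$.

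The only nontrivial ingredient is to identify the facial interval of $-F$ and to note that codimension is preserved under negation. First, I would verify the small geometric fact that if $F$ has facial interval $[m_F, M_F]$, then $-F$ has facial interval $[-M_F, -m_F]$; in other words, $m_{-F} = -M_F$ and $M_{-F} = -m_F$. This follows because $R$ contains $F$ if and only if $-R$ contains $-F$, combined with the fact that $R \mapsto -R$ is an order-reversing bijection on $\regions$. Simultaneously, $\codim(-F) = \codim(F)$ because $-F$ and $F$ span the same linear subspace.

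With these preliminaries in hand, the chain of equivalences is straightforward: $F \in \mi(\FW)$ iff $-F \in \ji(\FW)$, which by \autoref{prop:join_irr} (applicable since $\arrangement$ is simplicial) holds iff $M_{-F} \in \ji(\PR)$ and $\codim(-F) \in \{0,1\}$, which by the previous paragraph is equivalent to $-m_F \in \ji(\PR)$ and $\codim(F) \in \{0,1\}$, which finally by \autoref{prop:por_selfdual} is equivalent to $m_F \in \mi(\PR)$ and $\codim(F) \in \{0,1\}$, as desired.

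The main obstacle, such as it is, is the identification $[m_{-F}, M_{-F}] = [-M_F, -m_F]$; everything else is bookkeeping via the two self-dualities. I expect this can be dispatched in a single short paragraph (or even folded into the proof of \autoref{prop:selfdual_fwo} itself), after which the corollary follows in one line. Alternatively, one could give a direct proof mirroring that of \autoref{prop:join_irr} verbatim, swapping $m_F$ for $M_F$, covers below for covers above, and invoking \autoref{lem:min_num_cov} applied to the opposite base region, but the duality argument is cleaner and avoids repetition.
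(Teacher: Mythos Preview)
Your proposal is correct and follows essentially the same approach as the paper: the paper states this corollary without proof, merely prefacing it with ``Recalling that our lattice is self-dual by \autoref{prop:selfdual_fwo} we have the following two corollaries.'' Your argument spells out precisely the details behind this one-line invocation of duality, including the identification $[m_{-F},M_{-F}]=[-M_F,-m_F]$ that the paper leaves implicit.
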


\begin{corollary}
    \label{cor:meet_irr_pair}
    Let $F$ and $F'$ be faces of codimension $0$ and $1$ respectively such that $F \fwcover F'$.
    Then $F$ is meet-irreducible in the facial weak order if and only if~$F'$ is meet-irreducible in the facial weak order.
\end{corollary}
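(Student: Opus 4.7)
I plan to give a direct proof that parallels the argument for \autoref{cor:join_irr_pair}, relying on the characterization in \autoref{cor:meet_irr}; alternatively, one could simply invoke the self-duality of $\fw$ from \autoref{prop:selfdual_fwo} to transfer \autoref{cor:join_irr_pair} to this dual setting via $G \mapsto -G$, which preserves codimensions and swaps meet-irreducibility with join-irreducibility.

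The first step is to unpack the cover relation $F \fwcover F'$ under the given codimension hypotheses. By \autoref{cor:fwo_covers}, together with $\abs{\dim F - \dim F'} = 1$, exactly one of two cases occurs: either $F \subseteq F'$ and $M_F = M_{F'}$, or $F' \subseteq F$ and $m_F = m_{F'}$. Since $F$ is a region (codimension $0$) while $F'$ has codimension $1$, the inclusion $F \subseteq F'$ would force $\dim F \le \dim F'$, contradicting $\dim F = \dim F' + 1$. Hence we must be in the second case, which in particular yields the crucial equality $m_F = m_{F'}$.

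Now I apply \autoref{cor:meet_irr}. Both $F$ and $F'$ have codimension in $\{0,1\}$, so for each of them the meet-irreducibility in $\fw$ reduces to the condition that its minimum element lies in $\mi(\PR)$. Because $m_F = m_{F'}$, these two conditions coincide, giving the stated equivalence. There is no substantive obstacle in this argument; the only point requiring a moment's care is ruling out the wrong branch of \autoref{cor:fwo_covers} via the dimension count, exactly mirroring how the corresponding step is handled in the proof of \autoref{cor:join_irr_pair}.
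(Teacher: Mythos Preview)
Your proof is correct. The paper itself does not spell out an argument but simply derives both \autoref{cor:meet_irr} and \autoref{cor:meet_irr_pair} in one breath from the self-duality of \autoref{prop:selfdual_fwo}; this is precisely the alternative you mention, while your primary route---unpacking the cover relation via \autoref{cor:fwo_covers} to obtain $m_F = m_{F'}$ and then applying \autoref{cor:meet_irr}---is the natural dual of the explicit proof of \autoref{cor:join_irr_pair} and is equally valid.
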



\subsubsection{Semidistributivity}

In this subsection, we show that our lattice is semidistributive.
A lattice is \definition{join-semidistributive} if $x \join y = x \join z$ implies $x \join y = x \join (y \meet z)$.
Similarly, a lattice is \definition{meet-semidistributive} if the dual condition holds.
A lattice is \definition{semidistributive} if it is both meet-semidistributive and join-semidistributive.

Recall that for a join-irreducible element $x$, the unique element it covers is denoted by $x_\star$, \ie $x_\star \cover x$.
Likewise, for a meet-irreducible element $y$, the unique element covered by it is denoted $y^\star$, \ie $y \cover y^\star$.
Given a join-irreducible element $x$ and a meet-irreducible element $y$ for a finite lattice $L$, we say that $(x_\star,x)$ and~$(y,y^\star)$ are \definition{perspective} if $x \meet y = x_\star$ and $x \join y = y^\star$. 
We have the following lemma, see \cite[Theorem 2.56]{FreeseJezekNation_FreeLattices}.

\begin{lemma}
    \label{lem:m_semidist_iff_subcrit}
    A finite lattice is meet-semidistributive if and only if for every join-irreducible element $x$ there exists a unique meet-irreducible element $y$ such that~${(x_\star, x)}$ and $(y, y^\star)$ are perspective.
\end{lemma}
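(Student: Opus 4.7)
The plan is to study the candidate map $\kappa \colon \ji(L) \to \mi(L)$ sending each join-irreducible $x$ to a meet-irreducible $y$ satisfying $x \meet y = x_\star$ and $x \join y = y^\star$. The content of the lemma is precisely that this map is well-defined on all of $\ji(L)$ (i.e., such a $y$ exists and is unique for every $x$) if and only if $L$ is meet-semidistributive.

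For $(\Rightarrow)$, assume $L$ is meet-semidistributive. Fix $x \in \ji(L)$ and consider the set $T_x \coloneqq \{ z \in L : x \meet z = x_\star \}$. It is non-empty since $x_\star \in T_x$, and closed under binary joins by meet-semidistributivity: if $z_1, z_2 \in T_x$, then $x \meet (z_1 \join z_2) = x \meet z_1 = x_\star$. Hence $\kappa(x) \coloneqq \bigvee T_x$ lies in $T_x$. I would verify that $\kappa(x)$ is meet-irreducible (any decomposition $\kappa(x) = a \meet b$ with $a, b > \kappa(x)$ forces $x \le a$ and $x \le b$ by maximality of $\kappa(x)$ in $T_x$, yielding the impossible $x \le \kappa(x)$), that $\kappa(x)^\star = \kappa(x) \join x$ (every cover of $\kappa(x)$ must contain $x$ by the same maximality), and hence that $\kappa(x)$ is perspective to $(x_\star, x)$. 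For uniqueness, any meet-irreducible $y$ perspective to $(x_\star, x)$ lies in $T_x$, so $y \le \kappa(x)$; if $y < \kappa(x)$, then $y^\star = y \join x \le \kappa(x)$ since $y^\star$ is the unique cover of $y$, giving the contradictory $x \le \kappa(x)$.

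For $(\Leftarrow)$, assume the existence-and-uniqueness hypothesis and suppose toward a contradiction that meet-semidistributivity fails: there exist $a, b, c \in L$ with $a \meet b = a \meet c = d < a \meet (b \join c)$. Select a join-irreducible $x$ with $x \le a \meet (b \join c)$ but $x \not\le d$, so $x \le b \join c$ while $x \not\le b$ and $x \not\le c$. The goal is to show $b \le \kappa(x)$ and $c \le \kappa(x)$, which together yield the contradictory $x \le b \join c \le \kappa(x)$. To prove $b \le \kappa(x)$, I would assume otherwise and construct a second meet-irreducible perspective to $(x_\star, x)$: provided $x \not\le b \join x_\star$, take $m$ maximal in $L$ satisfying $b \join x_\star \le m$ and $x \not\le m$. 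Standard maximality arguments make $m$ meet-irreducible with $m^\star = m \join x$, and the condition $x_\star \le m$ forces $x \meet m = x_\star$, so $m$ is perspective to $(x_\star, x)$; since $b \le m$ while $b \not\le \kappa(x)$, we have $m \ne \kappa(x)$, contradicting uniqueness.

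The main obstacle is the residual case $x \le b \join x_\star$ (and symmetrically $x \le c \join x_\star$), in which the choice of $m$ above breaks down because no element above $b \join x_\star$ fails to dominate $x$. This is the technical heart of the argument: I would dispose of it by induction on the rank of $x$, passing to the sublattice $[x_\star, 1]$ — in which the uniqueness hypothesis is inherited and $x$ remains join-irreducible with the same lower cover $x_\star$ — to reduce the offending configuration $\{b, x_\star\}$ to a strictly smaller instance and obtain the required contradiction, following the classical argument of Freese, Ježek, and Nation.
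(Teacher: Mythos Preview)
The paper does not give its own proof of this lemma: it is quoted as \cite[Theorem~2.56]{FreeseJezekNation_FreeLattices} and used as a black box. So there is no argument in the paper to compare against; you are attempting to supply a proof where the authors chose to cite one. Your forward direction $(\Rightarrow)$ is correct and is exactly the standard construction of $\kappa$.

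The backward direction $(\Leftarrow)$, however, has a genuine gap in the residual case $x \le b \join x_\star$, and your proposed fix does not close it. Two concrete problems:
\begin{itemize}
\item The uniqueness hypothesis does \emph{not} obviously pass to the interval $[x_\star,1]$. A join-irreducible of $[x_\star,1]$ is an element covering exactly one element $\ge x_\star$; this need not be a join-irreducible of $L$ (e.g.\ in $M_3$, the top $1$ is join-irreducible in $[a,1]$ but not in $M_3$). So you cannot invoke the hypothesis inside $[x_\star,1]$ for free.
\item Even granting inheritance, the inductive setup collapses. In $[x_\star,1]$ the image of $b$ is $b' = b \join x_\star$, and you are precisely in the case $x \le b'$; so the crucial condition $x \not\le b$ is lost and there is no ``strictly smaller instance'' of the same shape to recurse on. Your parenthetical ``following the classical argument of Freese, Je\v{z}ek, and Nation'' is doing all the work here, but the specific reduction you describe is not the one they use.
\end{itemize}
A route that does work is to first prove the auxiliary statement: if $\kappa$ is defined on all of $\ji(L)$ and $a \not\le b$, then there exists $j \in \ji(L)$ with $j \le a$ and $b \le \kappa(j)$ (choose $m$ maximal with $b \le m$ and $a \not\le m$, then take $j$ \emph{minimal} join-irreducible with $j \le a$ and $j \not\le m$; minimality forces $j_\star \le m$, whence $m \le \kappa(j)$). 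Applying this with $a \meet(b\join c)\not\le b$ and separately with $a\meet(b\join c)\not\le c$ does not by itself give the same $j$ for both, and coordinating the two choices is exactly the substance of the Freese--Je\v{z}ek--Nation proof; your sketch does not yet contain this step.
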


We will use the following theorem, see \cite[Theorem 3]{Reading_LatticePropertiesPosetRegions}.

\begin{theorem}
    \label{thm:poc_semidist}
    For a simplicial arrangement $\arrangement$, its poset of regions is a semidistributive lattice.
\end{theorem}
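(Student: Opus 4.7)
The plan is to reduce the theorem to a single combinatorial inequality on separation sets and then exploit the self-duality of $\pr$. By \autoref{thm:simplicial_implies_lattice}, $\pr$ is a lattice when $\arrangement$ is simplicial, and by \autoref{prop:por_selfdual} it is self-dual, so it suffices to establish join-semidistributivity; meet-semidistributivity then follows by applying the duality $R \mapsto -R$ to the join-semidistributive identity.

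The main tool I would use is a combinatorial formula for the join in $\pr$ in terms of separation sets. Call a set $T \subseteq \arrangement$ \emph{biclosed} (with respect to $B$) if, for every rank-two subarrangement $\arrangement''$ of $\arrangement$, both $T \cap \arrangement''$ and $\arrangement'' \setm T$ are initial segments in the linear order on $\arrangement''$ induced by the base region~$B$. The key claim is that for any regions $R, R'$, the separation set $S(R \vee R')$ equals the smallest biclosed set containing $S(R) \cup S(R')$, and dually $S(R \wedge R')$ is the largest biclosed set contained in $S(R) \cap S(R')$. The simpliciality hypothesis is needed precisely here to guarantee that such a biclosed closure is itself a valid separation set, i.e.\ to ensure that the output of the closure operator is always of the form $S(Q)$ for some region~$Q \in \regions$. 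Verifying this would proceed by induction on the size of $S(R) \cup S(R') \setm (S(R) \cap S(R'))$ and by analyzing rank-two localizations using \autoref{prop:facial_int_are_simplicial}.

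Granted this formula, join-semidistributivity is a direct combinatorial check. Suppose $R \vee R' = R \vee R''$, and write $T \coloneqq S(R \vee R')$. I would argue that every hyperplane of~$T$ that is not already in $S(R) \cup S(R' \wedge R'')$ is forced into the biclosed closure of $S(R) \cup S(R' \wedge R'')$ by the same rank-two witnesses that force it into the closure of $S(R) \cup S(R')$: the hypothesis $R \vee R' = R \vee R''$ guarantees that the offending rank-two configurations appear symmetrically on the $R'$ and $R''$ sides, so their contributions survive passage to the meet $R' \wedge R''$. This yields $T \subseteq S(R \vee (R' \wedge R''))$, and the reverse inclusion is automatic from $R' \wedge R'' \prle R'$.

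The main obstacle is the rank-two bookkeeping: one must keep careful track of which hyperplanes are forced by which rank-two subarrangements under the biclosure, and verify that simpliciality rules out the degenerate cases where a forcing relation present for $R'$ disappears for $R' \wedge R''$. This is the technical core and the reason the simpliciality assumption cannot be relaxed in Reading's proof \cite[Theorem~3]{Reading_LatticePropertiesPosetRegions}.
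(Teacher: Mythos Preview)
The paper does not prove \autoref{thm:poc_semidist} at all: it is stated with the attribution ``see \cite[Theorem 3]{Reading_LatticePropertiesPosetRegions}'' and used as a black box in the proof of \autoref{prop:meetSD_fwo}. Your proposal is therefore attempting to supply a proof for a result the authors deliberately import from the literature rather than reprove.

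As for the content of your sketch, the overall architecture is reasonable and is in the spirit of the biclosed-set/shard machinery surrounding Reading's work, but what you have written is an outline, not a proof. You yourself identify the crux: the assertion that ``the offending rank-two configurations appear symmetrically on the $R'$ and $R''$ sides, so their contributions survive passage to the meet $R' \wedge R''$'' is precisely the non-trivial step, and you have not argued it. Taking the meet $R' \wedge R''$ can remove hyperplanes from both $S(R')$ and $S(R'')$, so a rank-two witness that forces some $H$ into the biclosure of $S(R) \cup S(R')$ may genuinely disappear when $R'$ is replaced by $R' \wedge R''$; showing that enough witnesses survive (or can be replaced) is exactly where simpliciality must be invoked in a substantive way, and you have only asserted that it works. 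Likewise, the claim that the biclosed closure of $S(R) \cup S(R')$ is always the separation set of a region is itself a theorem requiring proof in the simplicial case, not a routine induction. If you want a complete argument you should either carry out this biclosed-set analysis in full or follow Reading's original route, which establishes the stronger property of congruence normality (boundedness) via a different mechanism and obtains semidistributivity as a consequence.
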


It turns out that due to the self-duality of the facial weak order and its intimate connection with the poset of regions that perspective pairs do exist.
This will give us that the facial weak order is semidistributive.
Recall that $\ji(\FW)$ ($\ji(\PR)$) and $\mi(\FW)$ ($\mi(\PR)$) are the sets of join-irreducible and meet-irreducible elements in the facial weak order (poset of regions) respectively.

\begin{proposition}
    \label{prop:meetSD_fwo}
    The facial weak order is meet-semi-distributive.
\end{proposition}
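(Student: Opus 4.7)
The plan is to apply Lemma \ref{lem:m_semidist_iff_subcrit}: for every join-irreducible $F \in \ji(\FW)$ I will produce a unique $G \in \mi(\FW)$ such that $(F_\star, F)$ and $(G, G^\star)$ are perspective in $\fw$. The main idea is to lift the semidistributivity of $\pr$ (Theorem \ref{thm:poc_semidist}) up to $\fw$, using Proposition \ref{prop:join_irr} and Corollary \ref{cor:meet_irr}, which say that each join-irreducible $F$ of $\fw$ has $M_F \in \ji(\PR)$ and $\codim(F) \in \{0,1\}$, and dually for meet-irreducibles.

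Given $F \in \ji(\FW)$, let $(M_F)_\star$ denote the unique region covered by $M_F$ in $\pr$. By semidistributivity of $\pr$ there is a unique $N \in \mi(\PR)$ with $M_F \prmeet N = (M_F)_\star$ and $M_F \prjoin N = N^\star$. I then define $G$ of opposite codimension type to $F$: if $\codim(F) = 0$ (so $F = M_F$ is a region) I let $G$ be the codimension one face with facial interval $[N, N^\star]$, while if $\codim(F) = 1$ I let $G$ be the region $N$ itself. In either case $G \in \mi(\FW)$ by Corollary \ref{cor:meet_irr}. To verify the perspective identities $F \fwmeet G = F_\star$ and $F \fwjoin G = G^\star$, I will use Definition \ref{def:fwo_lower_upper_related} to reduce comparisons in $\fw$ to comparisons of the endpoints $m$ and $M$ in $\pr$. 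The key algebraic inputs $(M_F)_\star \prle N$ and $M_F \prle N^\star$, coming from the perspective identities in $\pr$, cause the joins and meets of endpoints to collapse onto the prescribed targets.

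For uniqueness, suppose $G' \in \mi(\FW)$ is also perspective to $F$. The endpoints of $G'$ and $G'^\star$ yield a cover pair in $\pr$ that is perspective to $((M_F)_\star, M_F)$, and semidistributivity of $\pr$ forces this cover pair to be $(N, N^\star)$. This leaves only two candidates for $G'$: the region $N$ or the codimension one face with interval $[N, N^\star]$. A direct check rules out the candidate of the same codimension type as $F$: if $F$ is a region, then $F \fwjoin N$ is itself a region by Proposition \ref{prop:sublattice} and therefore cannot equal the codimension one face $N^\star$ computed in $\fw$, so $G' = N$ fails; dually, if $F$ is a facet, then $F \fwle [N, N^\star]$ already holds from the perspective identities in $\pr$, which forces $F \fwmeet G' = F \ne F_\star$. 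The main obstacle will be carrying out the four join/meet computations cleanly in each case, which boils down to bookkeeping of the interval endpoints $m_\cdot, M_\cdot$ together with the inequalities provided by the perspective pair in $\pr$.
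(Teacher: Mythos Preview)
Your proposal follows essentially the same route as the paper: invoke \autoref{lem:m_semidist_iff_subcrit}, use \autoref{prop:join_irr} and \autoref{cor:meet_irr} to reduce to the perspective pair $((M_F)_\star,M_F)\leftrightarrow(N,N^\star)$ in $\pr$ supplied by \autoref{thm:poc_semidist}, and then lift this pair to $\fw$ by matching $F$ with the meet-irreducible of the \emph{opposite} codimension type; the verification of $F\fwmeet G=F_\star$ and $F\fwjoin G=G^\star$ is exactly the endpoint bookkeeping the paper carries out.

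One remark on organization: your uniqueness argument first asserts that any perspective $G'$ must have $m_{G'}=N$ and then rules out the same-codimension candidate. In fact the same-codimension cases are eliminated directly, before one can extract the $\pr$-perspectivity: when $\codim(F)=\codim(G')=0$ the sublattice property forces $F\fwjoin G'$ to be a region, contradicting $\codim(G'^\star)=1$; when $\codim(F)=\codim(G')=1$, the hypotheses $F_\star\fwle G'$ and $F\fwle G'^\star$ already give $m_F\prle m_{G'}$ and $M_F\prle M_{G'}$, hence $F\fwle G'$, so $F\fwmeet G'=F\ne F_\star$. Once only the opposite-codimension case remains, your deduction of $m_{G'}=N$ from the $\fw$-perspectivity goes through and uniqueness follows. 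This is a minor reordering; the argument is sound. (The paper's own proof, incidentally, verifies existence of the perspective $G$ but does not spell out uniqueness, so your treatment here is in fact more complete.)
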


\begin{proof}
    By \autoref{lem:m_semidist_iff_subcrit} it suffices to show for every face $F \in \ji(\FW)$ there exists a unique face~${G \in \mi(\FW)}$ such that $(F_\star, F)$ and $(G, G^\star)$ are perspective, \ie~${F \fwmeet G = F_\star}$ and~${F \fwjoin G = G^\star}$.
    An example can be seen in \autoref{fig:MSD}.
    Let $[m_F, M_F]$ denote the facial interval of $F$.
    Since~${F \in \ji(\FW)}$, by \autoref{prop:join_irr},~${M_F \in \ji(\PR)}$ and~${\codim{F} \in \{0,1\}}$.

    Suppose $\codim(F) = 1$.
    Since $F \in \ji(\FW)$ by \autoref{lem:min_num_cov} there exists a unique face $F'$ of codimension $0$ such that $F \fwcover F'$ and $M_F = M_{F'}$.
    By \autoref{cor:join_irr_pair} $F' \in \ji(\FW)$ with $F = F'_\star$.
    We then have the following chain of covers~${F_\star \fwcover F = F'_\star \fwcover F'}$.
    
    Since $M_{F'} = M_{F} \in \ji(\PR)$, by \autoref{thm:poc_semidist} and \autoref{lem:m_semidist_iff_subcrit}, there exists a unique meet-irreducible region $M_{G}$ such that $\left(({M_{F'}})_\star,M_{F'}\right)$ and $\left(M_{G},(M_{G})^\star\right)$  are perspective in the poset of regions.
    Let $G = M_{G}$ be the codimension $0$ face associated to the region $M_{G}$.
    Since $M_{G} = m_{G}$ is meet-irreducible in the poset of regions, then~$G$ is meet-irreducible in the facial weak order by \autoref{cor:meet_irr} since it is of codimension $0$.
    Then, by definition of meet-irreducible, there exists a unique face~$G'$ of codimension~$1$ such that~${G \fwcover G'}$.
    Furthermore, by \autoref{cor:meet_irr_pair},~$G'$ is meet-irreducible in the facial weak order with $m_{G} = m_{G'}$.
    We then have the following chain of covers $G \fwcover G^\star = G' \fwcover {G'}^\star$.

    Recalling that $\left((M_{F'})_\star,M_{F'}\right)$ and $\left(M_{G}, (M_{G})^\star\right)$ are perspective in the poset of regions, and furthermore, since $M_{F'} = M_F$ and $M_{G} = m_{G} = m_{G'}$ we have:
    \[
        M_F \prmeet m_{G'} = m_F \qquad M_F \prjoin m_{G'} = M_{G'}.\tag{$\diamond$}\label{ex:sd_por_joins}
    \]

    This implies that the pair $(F_\star, F)$ and $(G,G^\star)$ and the pair $(F'_\star, F')$ and $(G', G'^\star)$ are both perspective.
    Indeed, looking at the first case $(F_\star, F)$ and $(G,G^\star)$, we want to show~${F \fwmeet G = F_\star}$ and~${F \fwjoin G = {G}^\star = G'}$.
    For $F \fwmeet G = F_\star$, since~$F$ covers only~$F_\star$ by definition of join-irreducible, it suffices to show~${F_\star \fwle G}$.
    Similarly, since $G$ is only covered by~${{G}^\star = G'}$, to show $F \fwjoin G = G'$, it suffices to show $F \fwle G'$.
    To show~${F \fwle G'}$ it suffices to observe that $m_F \prle m_{G'}$ and~${M_F \prle M_{G'}}$.
    Indeed, by~\autoref{ex:sd_por_joins}, we have~${M_F \prmeet m_{G'} = m_F}$, implying~${m_F \prle m_{G'}}$ and~${M_F \prjoin m_{G'} = M_{G'}}$ giving~${M_F \prle M_{G'}}$ as desired.
    To show~${F_\star \fwle G}$ we follow a similar approach by proving that $m_{F_\star} \prle m_{G}$ and $M_{F_\star} \prle M_{G}$.
    Since~${M_F \prmeet m_{G} = M_F \prmeet m_{G'} = m_F = m_{F_\star}}$, therefore $m_{F_\star} \prle m_{G}$.
    Also, since $G$ and $F_\star$ are of codimension $0$ we have~${M_{F_\star} = m_{F_\star} \prle m_{G} = M_{G}}$ as desired.
    Therefore $F_\star \fwle G$ in the facial weak order.

    The case $({F'}_\star, F')$ and $(G',G'^\star)$ is handled similarly.

    Notice that the case where $\codim(F) = 0$ was handled in the proof above since~$F'$ is a join-irreducible element in the facial weak order with codimension $0$.
\begin{figure}[b]
	\centerline{
    \begin{tikzpicture}
	    [
	    vertex/.style={inner sep=1pt,circle,draw=green!85!white,fill=green!85!white,thick},
	    ]
	    %
	    %
        \coordinate (F1) at (0,0);
        \coordinate (F2) at (0,1);
        \coordinate (G) at (0,2);
        \coordinate (G') at (1,1);
        \coordinate (F1') at (1,2);
        \coordinate (F2') at (1,3);
        \draw (F1) -- (F2);
        \draw (F2) -- (G);
        \draw (G') -- (F1');
        \draw (F1') -- (F2');
        \draw[dashed] (F1) -- (G');
        \draw[dashed] (F2) -- (F1');
        \draw[dashed] (G) -- (F2');
	    %
        \node[left] at (F1) {$m_{F} = F_\star$};
        \node[left] at (F2) {$F$};
        \node[left] at (G) {$M_{F} = F'$};
        \node[right] at (F1') {$G'$};
        \node[right] at (F2') {$M_{G'} = {G'}^\star$};
        \node[right] at (G') {$m_{G'} = G$};
    \end{tikzpicture}
}
	\caption{Meet-semidistributivity in the facial weak order.}
	\label{fig:MSD}
\end{figure}
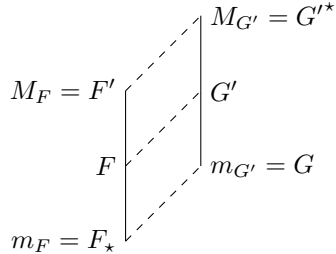
\end{proof}

Combining this proposition with the fact that the lattice is self-dual, we get join-semidistributivity for free.
In particular, we get that our lattice is semidistributive.

\begin{theorem}
    For a simplicial arrangement $\arrangement$, the facial weak order is semidistributive.
\end{theorem}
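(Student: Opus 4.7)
The plan is to combine two results already established in the paper: the meet-semidistributivity proved in Proposition~\ref{prop:meetSD_fwo} and the self-duality of the facial weak order proved in Proposition~\ref{prop:selfdual_fwo}. The key observation is that the defining identity for join-semi\-distributivity, namely
\[
    x \fwmeet y = x \fwmeet z \implies x \fwmeet y = x \fwmeet (y \fwjoin z),
\]
is precisely the defining identity for meet-semidistributivity read in the opposite lattice. Hence join-semidistributivity and meet-semidistributivity are exchanged by any self-duality.

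More concretely, I would argue as follows. Let $F, G, H$ be three faces of $\arrangement$ with $F \fwjoin G = F \fwjoin H$. Applying the self-duality $X \mapsto -X$ of Proposition~\ref{prop:selfdual_fwo}, and recalling that joins and meets are swapped by an order-reversing bijection, we obtain $(-F) \fwmeet (-G) = (-F) \fwmeet (-H)$. By Proposition~\ref{prop:meetSD_fwo}, the facial weak order is meet-semidistributive, so
\[
    (-F) \fwmeet (-G) = (-F) \fwmeet \bigl( (-G) \fwjoin (-H) \bigr).
\]
Translating this identity back through the self-duality (which sends the right-hand side meet to a join and the inner join to a meet) yields
\[
    F \fwjoin G = F \fwjoin (G \fwmeet H),
\]
which is exactly join-semidistributivity for the triple $(F, G, H)$.

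Since the facial weak order is simultaneously meet-semidistributive and, by the above argument, join-semidistributive, it is semidistributive, completing the proof. There is no real obstacle here: the whole argument is a one-line application of the symmetry principle that self-duality swaps the two semidistributive laws, and all the substantive content was absorbed into Proposition~\ref{prop:meetSD_fwo} and Proposition~\ref{prop:selfdual_fwo}.
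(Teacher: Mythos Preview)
Your proof is correct and follows exactly the paper's approach: the paper simply remarks that combining Proposition~\oldref{prop:meetSD_fwo} with the self-duality of Proposition~\oldref{prop:selfdual_fwo} gives join-semidistributivity ``for free,'' and hence semidistributivity. One small expository slip: the displayed implication you label as ``the defining identity for join-semidistributivity'' is in fact the meet-semidistributive law (compare the paper's definitions); however, your concrete argument with $F,G,H$ that follows is stated correctly and is all that is needed.
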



\section{Topology of the facial weak order}
\label{sec:topology}

In this section, we determine the homotopy type of intervals of the facial weak order; see \autoref{thm_weak_top}. Before proving this theorem, some preliminary results on the topology of posets are given in \autoref{subsec:prelim}.


\subsection{Poset topology}
\label{subsec:prelim}

In this section, we recall some standard tools and definitions concerning simplicial complexes that we use in the proof of \autoref{thm_weak_top}. The main result we need is \autoref{lem_link_del}.

An \definition{abstract simplicial complex $\Delta$} is a ground set $E$ with a collection $\Delta$ of subsets of $E$ (called \definition{faces}) such that if $F \in \Delta$ and $G \subseteq F$ then $G \in \Delta$.
The \definition{dimension of a face} $F \in \Delta$ is given by $\dim(F) = \abs{F} - 1$ and the \definition{dimension of $\Delta$} is the maximal dimension of all faces.
An abstract simplicial complex of dimension $d$ can be realized geometrically in $\R^{2d +1}$ by a union of simplices well-defined up to homeomorphism.
We denote this geometrical realization by $\norm{\Delta}$.
The \definition{deletion}~$\del_{\Delta}(F)$ of $F$ from $\Delta$ is the subcomplex of faces disjoint from $F$.
The \definition{link} $\lk_{\Delta}(F)$ of $F$ is the subcomplex of faces $G$ for which $F \cap G = \varnothing$ and $F\cup G$ is a face of~$\Delta$.
The \definition{join} $\Delta*\Delta^{\prime}$ of two complexes with disjoint ground sets is the simplicial complex with faces $F \sqcup F^{\prime}$ where $F \in \Delta,\ F^{\prime} \in \Delta^{\prime}$.
The \definition{cone} $\{v\}*\Delta$ is the join of $\Delta$ with a one-element complex.
The \definition{suspension} $\susp\Delta$ is the join of $\Delta$ with a discrete two-element complex.
A fundamental homotopy equivalence connecting the deletion and link to the original complex is the following, which can be proved using the Carrier Lemma (\eg \cite[Lemma 2.1]{Walker_homotopy}).

~\\

\begin{lemma}
	\label{lem_link_del}
	Let $F$ be a face of a simplicial complex $\Delta$.
	\begin{enumerate}
		\item If $\lk_{\Delta}(F)$ is contractible, then $\Delta$ is homotopy equivalent to $\del_{\Delta}(F)$.
		\item If $\del_{\Delta}(F)$ is contractible, then $\Delta$ is homotopy equivalent to the suspension of $\lk_{\Delta}(F)$.
	\end{enumerate}
\end{lemma}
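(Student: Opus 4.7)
The plan is to apply the Carrier Lemma of \cite[Lemma~2.1]{Walker_homotopy} to a standard pushout decomposition of $\Delta$. I would let $S \coloneqq F * \lk_\Delta(F)$ denote the closed star of $F$ (a simplicial cone, and hence always contractible) and $A \coloneqq \{G \in \Delta : F \not\subseteq G\}$ denote the antistar subcomplex; a direct check from the definitions gives $\Delta = A \cup S$ with intersection $A \cap S = \partial F * \lk_\Delta(F)$, where $\partial F$ is the boundary of the simplex on the vertex set of $F$.

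For part~(1), the hypothesis that $\lk_\Delta(F)$ is contractible implies that the intersection $\partial F * \lk_\Delta(F)$ is also contractible (any join with a contractible space is contractible), so the gluing lemma will force the inclusion $A \hookrightarrow \Delta$ to be a homotopy equivalence. When $F$ is a single vertex, $A = \del_\Delta(F)$ and we are done immediately; for $\abs{F} > 1$, I would reduce to this case by induction on $\abs{F}$, using the identity $\lk_{\lk_\Delta(\{v\})}(F \setminus \{v\}) = \lk_\Delta(F)$ for any $v \in F$, together with a further application of the Carrier Lemma to collapse the faces of $\Delta$ that meet $F$ partially without containing it.

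For part~(2), the same decomposition will apply. The hypothesis that $\del_\Delta(F)$ is contractible (together with the analogous vertex reduction, where $A$ and $\del_\Delta(F)$ coincide) renders both subcomplexes in the pushout contractible, glued along a subspace homotopy equivalent to $\lk_\Delta(F)$; the resulting pushout of two cones along $\lk_\Delta(F)$ is by construction the suspension $\susp\lk_\Delta(F)$. I expect the main obstacle in both parts to be reconciling $\del_\Delta(F)$ (faces disjoint from $F$) with the antistar $A$ (faces not containing $F$), which coincide only when $F$ is a vertex; for larger $F$ this gap must be bridged by the inductive carrier step above, exploiting the contractibility of $\lk_\Delta(F)$ at each stage of the induction.
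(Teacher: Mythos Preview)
Your star/antistar decomposition and appeal to the gluing (Carrier) lemma is exactly what the paper intends---the paper gives no proof of its own, only the pointer to \cite[Lemma~2.1]{Walker_homotopy}. When $\abs{F}=1$ the antistar coincides with $\del_\Delta(F)$ and your argument goes through directly; this vertex case is in fact the only one ever invoked later (see the proofs of \autoref{lem_poset_reduction} and \autoref{thm_weak_top}).

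The genuine gap lies in your inductive step for $\abs{F}>1$: with the paper's definition of $\del_\Delta(F)$ as the subcomplex of faces \emph{disjoint} from $F$, the lemma is actually false for non-vertex $F$, so no bridge from the antistar to $\del_\Delta(F)$ can exist. For part~(i), take $\Delta$ on $\{1,2,3,4,5\}$ with maximal faces $\{1,2,3\}$, $\{1,4\}$, $\{2,5\}$ and $F=\{1,2\}$: then $\lk_\Delta(F)=\{\varnothing,\{3\}\}$ is a point, yet $\Delta$ is contractible while $\del_\Delta(F)=\{\varnothing,\{3\},\{4\},\{5\}\}$ is three isolated points. For part~(ii), take $\Delta$ to be the boundary of a $2$-simplex on $\{1,2,3\}$ and $F=\{1,2\}$: then $\del_\Delta(F)=\{\varnothing,\{3\}\}$ is contractible, but $\Delta\cong\Sbb^1$ whereas $\susp\lk_\Delta(F)=\susp\{\varnothing\}\cong\Sbb^0$. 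You should therefore either restrict your write-up to the vertex case (which suffices for all downstream applications), or correct the statement by replacing $\del_\Delta(F)$ with the antistar and $\lk_\Delta(F)$ with $\partial F*\lk_\Delta(F)$.
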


Given elements $x,y$ of a poset $P$, the \definition{open interval} $(x,y)$ (resp.~\definition{closed interval}~$[x,y]$) is the set of $z \in P$ such that $x < z < y$ (resp.~$x \leq z \leq y$). We let $P_{<x}$ (resp.~$P_{>x}$) denote the set of elements $y\in P$ such that $y < x$ (resp.~$y > x$). The \definition{order complex} $\Delta(P)$ of a poset $P$ is the simplicial complex of chains $x_0 < \cdots < x_d$ of elements of $P$. The link of a face $x_0 < \cdots < x_d$ is isomorphic to the join of the order complexes of $P_{<x_0}, (x_0, x_1), \ldots, (x_{d-1}, x_d), P_{>x_d}$. Hence, the local topology of $\Delta(P)$ is completely determined by the topology of open intervals and principal order ideals and filters of $P$. In the remainder of this section, whenever we write about the topology of $P$, we mean the topology of its order complex.

In \autoref{subsec:topology}, we explicitly determine the homotopy types of intervals of the facial weak order. To this end, we will use some consequences of \autoref{lem_link_del}.

\begin{lemma}
	\label{lem_poset_reduction}
    Let $P$ be a poset, and let $X \subseteq P$ such that $P_{<x}$ is contractible for all~${x \in X}$. Then $P$ is homotopy equivalent to~${P \setm X}$.
\end{lemma}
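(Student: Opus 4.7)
The plan is to proceed by induction on $|X|$, which is finite in the combinatorial settings where the lemma will be applied. The base case $X = \varnothing$ is immediate since $P \setm X = P$.

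For the inductive step, I will pick $x \in X$ that is \emph{maximal} in $X$ with respect to the order inherited from $P$, and show that $P \simeq P \setm \{x\}$ before passing to the smaller pair $(P \setm \{x\}, X \setm \{x\})$. The strategy is to apply \autoref{lem_link_del}(i) to the vertex $\{x\}$ of the order complex $\Delta(P)$. The key ingredient is the standard decomposition
\[
    \lk_{\Delta(P)}(\{x\}) = \Delta(P_{<x}) * \Delta(P_{>x}).
\]
Since $\Delta(P_{<x})$ is contractible by hypothesis, and the join of a nonempty contractible complex with any other complex is itself contractible (it deformation retracts onto the cone on the second factor once the first factor is contracted to a point), this link is contractible. \autoref{lem_link_del}(i) then yields $\Delta(P) \simeq \del_{\Delta(P)}(\{x\}) = \Delta(P \setm \{x\})$.

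To close the induction, I need to check that the hypothesis transfers from $(P, X)$ to $(P \setm \{x\}, X \setm \{x\})$. For any $y \in X \setm \{x\}$, the maximality of $x$ in $X$ forbids $x < y$, so $x \notin P_{<y}$ and therefore $(P \setm \{x\})_{<y} = P_{<y}$, which is contractible by assumption. Applying the inductive hypothesis gives $P \setm \{x\} \simeq (P \setm \{x\}) \setm (X \setm \{x\}) = P \setm X$, and composing the two homotopy equivalences finishes the proof.

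The only subtle point is the edge case $P_{<x} = \varnothing$: in that situation the join formula degenerates to $\lk_{\Delta(P)}(\{x\}) = \Delta(P_{>x})$, which need not be contractible. Following the standard convention in poset topology in which the empty order complex is \emph{not} considered contractible, the hypothesis automatically excludes this degeneracy; the main bookkeeping obstacle is then simply making sure the maximality choice of $x$ preserves the running hypothesis at each inductive step, which is precisely what the argument above achieves.
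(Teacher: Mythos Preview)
Your proof is correct and follows essentially the same approach as the paper: the paper likewise removes the elements of $X$ one at a time in an order where no removed element lies below a remaining one (equivalently, maximal first), uses the link decomposition $\lk_{\Delta}(\{x\}) = \Delta(P_{<x}) * \Delta(P_{>x})$, and invokes \autoref{lem_link_del}(i) together with the contractibility of $\Delta(P_{<x})$. Your added remark about the empty-poset edge case is a nice clarification not made explicit in the paper.
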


\begin{proof}
	Let $X = \{x_1, x_2, \ldots, x_n\}$ so that whenever $i \leq j$, we have $x_i \nless x_j$. We claim that $P$ is homotopy equivalent to $P \setm \{x_1, \ldots, x_i\}$ for any $i$. 
    First observe that~${\lk_{\Delta(P)}(x_1) = \Delta(P_{<x_1})*\Delta(P_{>x_1})}$ is contractible, so $P$ is homotopy equivalent to~${P \setm \{x_1\}}$. More generally, from the assumption on the ordering of elements of $X$, we have $\lk_{\Delta(P \setm  \{x_1, \ldots, x_{i-1}\})}(x_i) = \Delta(P_{<x_i})*\Delta^{\prime}$ for some simplicial complex $\Delta^{\prime}$. 
	This is again contractible, so by induction, $P$ is homotopy equivalent to $P \setm \{x_1, \ldots, x_i\}$. Taking $i=n$, we have completed the proof.
\end{proof}

A \definition{closure operator} (resp.~\definition{dual closure operator}) on a poset $P$ is an idempotent, order preserving, increasing (resp.~decreasing) function $f:P\rightarrow P$.

\begin{lemma}[Corollary 10.12 \cite{Bjorner_topologicalMethods}]
	\label{lem_closure_top}
	If $f:P\rightarrow P$ is a closure operator or a dual closure operator, then $P$ is homotopy equivalent to $f(P)$.
\end{lemma}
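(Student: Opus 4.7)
The plan is to prove the closure operator case by strong induction on $|P|$; the dual closure operator case then follows at once by applying the closure case to the opposite poset $P^{\mathrm{op}}$, since $\Delta(P) = \Delta(P^{\mathrm{op}})$ as abstract simplicial complexes and a dual closure operator on $P$ is a closure operator on $P^{\mathrm{op}}$. The base cases $|P| \le 1$ are trivial since then the only available $f$ is the identity.

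The main tool is the following dualization of \autoref{lem_poset_reduction}, obtained by applying that lemma to $P^{\mathrm{op}}$: if $X \subseteq P$ is such that $P_{>x}$ is contractible for every $x \in X$, then $P$ is homotopy equivalent to $P \setm X$. Given a closure operator $f \colon P \to P$, I set $X \coloneqq P \setm f(P) = \{x \in P : x < f(x)\}$, so that $P \setm X = f(P)$. It therefore suffices to show that $P_{>x}$ is contractible for every $x \in X$.

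Fix $x \in X$. Since $f$ is increasing and order preserving, the restriction $f|_{P_{>x}}$ takes values in $P_{>x}$ (as $y > x$ forces $f(y) \ge y > x$) and is immediately verified to be a closure operator on the strictly smaller poset $P_{>x}$. By the inductive hypothesis applied to $P_{>x}$, its order complex is homotopy equivalent to that of its image, and a direct check shows $f(P_{>x}) = f(P) \cap P_{>x}$. The last step is to observe that $f(x)$ is the minimum of this set: it belongs to $f(P) \cap P_{>x}$ because $f(x) > x$ and $f(f(x)) = f(x)$, while for any $y \in f(P) \cap P_{>x}$ we have $y = f(y) \ge f(x)$ by idempotency and monotonicity. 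Thus $f(P) \cap P_{>x}$ has a minimum and its order complex is a cone, hence contractible; so $P_{>x}$ is contractible, completing the induction.

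I do not anticipate any serious obstacle. The only care needed is to verify the routine identities that $f$ restricts to a closure operator on $P_{>x}$ and that $f(P_{>x}) = f(P) \cap P_{>x}$; both follow immediately from $f$ being an order-preserving, increasing, idempotent map.
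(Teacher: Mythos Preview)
Your proof is correct. The paper does not actually prove this lemma---it cites it from Bj\"orner and only remarks that it ``may be proved in many ways, \eg by repeated application of \autoref{lem_link_del} as we did for \autoref{lem_poset_reduction} or by application of Quillen's Fiber Lemma.'' Your argument is a valid fleshing-out of the first suggested route: you invoke the dual of \autoref{lem_poset_reduction} and supply the needed contractibility of each $P_{>x}$ by an induction on $|P|$, reducing to the cone over the minimum $f(x)$ of $f(P)\cap P_{>x}$. The second route the paper mentions is shorter but uses outside machinery: the inclusion $\iota\colon f(P)\hookrightarrow P$ and the map $f\colon P\to f(P)$ satisfy $f\circ\iota=\mathrm{id}$ and $\iota\circ f\ge\mathrm{id}$ pointwise, so the order-homotopy lemma (or Quillen's Theorem~A) makes them homotopy inverses in one stroke. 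Your approach has the virtue of staying entirely within the toolkit already developed in \autoref{subsec:prelim}.
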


This lemma may be proved in many ways, \eg by repeated application of \autoref{lem_link_del} as we did for \autoref{lem_poset_reduction} or by application of Quillen's Fiber Lemma \cite[Theorem 10.5]{Bjorner_topologicalMethods}.


\subsection{Topology of intervals of the facial weak order}
\label{subsec:topology}

Let $\arrangement$ be a real, central hyperplane arrangement with base region $B$. As usual, we orient the hyperplanes in $\arrangement$ so that
$$B=\bigcap_{H \in \arrangement}H^+.$$

Recall that the \definition{poset of regions} $\pr$ is the set of regions with the partial order $R \prle R^{\prime}$ if and only if $S(R) \subseteq S(R^{\prime})$.
With this ordering, $B$ is the unique minimum element of the poset of regions. Given faces $X,Y$ of $\arrangement$, we say that $X$ is \definition{incident} to $Y$ if $X \supseteq Y$.

P.~H.~Edelman and J.~W.~Walker determined the local topology of the poset of regions \cite{EdelmanWalker_homotopy}. As this result will be used in the proof of \autoref{thm_weak_top}, we state it here.

\begin{theorem}[\cite{EdelmanWalker_homotopy}]
	\label{thm_edelman_walker}
    For each face $X \in \faces$, the set of regions incident to~$X$ is an interval $[R_1,R_2]_{\prle}$ of the poset of regions such that the open interval~${(R_1,R_2)_{\prle}}$ is homotopy equivalent to a sphere of dimension $\codim(X)-2$. Every other interval is contractible.
\end{theorem}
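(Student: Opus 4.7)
My plan for proving this theorem breaks into two parts corresponding to the two homotopy statements; the first assertion of the theorem, that incident regions form an interval, is immediate from \autoref{prop:facial_intervals} (taking $R_1 = m_X$ and $R_2 = M_X$).

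For the sphere claim on facial intervals, I would first apply a localization reduction: analogously to \autoref{lem:localization}, restricting to regions containing~$X$ yields an order isomorphism between $[m_X,M_X]$ in~$\pr$ and the full poset of regions $\pr[\arrangement_X][B_X]$, with~$\arrangement_X$ essential of rank~$\codim(X)$. This reduces the problem to the universal statement that, for every essential central arrangement of rank~$d$ and any base region~$B'$, the open interval $(B',-B')_{\prle}$ is homotopy equivalent to $S^{d-2}$. For this universal statement, my plan is to invoke a Bruggesser--Mani line shelling of the zonotope $\zonotope[\arrangement']$ anchored at the vertex $\tau(B')$; the resulting facet shelling order refines~$\prle$, and from it one extracts a shelling of $\pr[\arrangement']$ exhibiting $B'$ and $-B'$ as the two cone points of a shellable ball, so that removing them leaves an order complex homotopy equivalent to a shellable $(d-2)$-sphere.

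For the contractibility of non-facial intervals $[R_1,R_2]_{\prle}$, I would set $F \coloneqq \bigcap_{R \in [R_1,R_2]} R$, which is a face of~$\arrangement$. \autoref{prop:facial_intervals} applied to~$F$ gives $[R_1,R_2] \subseteq [m_F,M_F]$, with strict inclusion by the non-facial hypothesis. Using the self-duality of $\pr$ (\autoref{prop:por_selfdual}), I may assume $R_2 \prlt M_F$; any hyperplane $H \in S(M_F) \setm S(R_2)$ then lies in~$\arrangement_F$ by \autoref{lem:covector_and_sepSet} and separates no pair of regions inside $[R_1,R_2]$. After localizing to~$\arrangement_F$, the interval $[R_1,R_2]$ embeds as a proper sub-interval of $\pr[\arrangement_F][B_F]$ lying strictly below $-B_F$. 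I would then exhibit a dual closure operator on $(R_1,R_2)_{\prle}$ by folding each region across~$H$ toward~$R_2$ inside~$\arrangement_F$, and appeal to \autoref{lem_closure_top} to collapse the open interval onto a contractible chain; a secondary induction on $\abs{S(M_F) \setm S(R_2)}$ would cover cases where a single fold does not directly produce a chain.

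The hardest step will be the sphere claim in the reduced rank-$d$ case, since it demands genuine topological input via polytope shellability rather than purely order-theoretic manipulation. By contrast, the localization and duality reductions, together with the closure-operator collapse for non-facial intervals, should fit cleanly into the framework of \autoref{lem_poset_reduction} and \autoref{lem_closure_top} once that core sphere computation is established.
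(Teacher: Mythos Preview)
The paper does not prove \autoref{thm_edelman_walker}; it is quoted from \cite{EdelmanWalker_homotopy} as a known result and invoked as a black box inside the proof of \autoref{thm_weak_top}. There is thus no proof in the paper to compare your proposal against.

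That said, your sketch has a genuine gap in the contractibility half. After you localize to~$\arrangement_F$ and choose $H \in S(M_F) \setm S(R_2)$, every region $R \in [R_1, R_2]$ satisfies $H \notin S(R)$ (since $S(R) \subseteq S(R_2)$), so all of them already lie on the same side of~$H$ as~$R_2$. Your proposed operator of ``folding each region across~$H$ toward~$R_2$'' therefore moves nothing and is the identity on $(R_1, R_2)$; it cannot collapse the interval via \autoref{lem_closure_top}. The secondary induction on $\abs{S(M_F) \setm S(R_2)}$ does not rescue this, because its base case still requires a nontrivial contraction that the fold does not supply. The Edelman--Walker argument for non-facial intervals proceeds by a more delicate mechanism (a recursive coatom ordering giving a shelling), not by a single reflection across a redundant hyperplane.

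Your sphere argument for facial intervals, via localization to~$\arrangement_X$ followed by a Bruggesser--Mani line shelling of the zonotope, is sound in outline and is one of the standard routes to that half of the result.
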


Recall that $\vspan(X)$ denotes the subspace spanned by a face $X$.
The \definition{poset of intersection subspaces}, or simply \definition{intersection poset}, is the poset on the subspaces~${L(\arrangement) = \set{\bigcap_{H \in \intersection} H}{\intersection \subseteq \arrangement}}$ ordered by reverse inclusion.
As before this poset is a lattice when the vector space $V$ is added as the bottom element and is called the \definition{intersection lattice}.
For $X$ and $Y$ in $L(\arrangement)$, the join is given by~${X \join_L Y = X \cap Y}$ and the meet by $ X \meet_L Y = \bigcap_{ {X \cup Y \subseteq Z} } Z$.
For further information on the intersection lattice we refer the reader to P.~Orlik and H.~Terao's book \cite[Section 2]{OrlikTerao}.
For a face $X$, let $\arrangement^X$ denote the \definition{restriction} of $\arrangement$ to $\vspan(X)$ where
\[
    \arrangement^X = \set{H \cap \vspan(X)}{H \in \arrangement \setm \arrangement_X}.
\]
The set $\arrangement^X$ is an arrangement of hyperplanes in the vector space spanned by $X$.
For a covector $X$, we recall the map $\pi_X: \faces \rightarrow \faces[\arrangement_X]$ where for any covector~$Y$ of~$\arrangement$, its image is the covector with $\pi_X(Y)(H) = Y(H)$ for $H \in \arrangement_X$.
Similarly, one can define a map $\iota^X: \faces[\arrangement^X] \rightarrow \faces$ such that for a covector $Y$ of $\arrangement^X$ we have
\[
    \iota^X(Y)(H) = \begin{cases} Y(H) \ \ & \text{if } H \in \arrangement^X \\ 0 \ \ & \text{if } H \in \arrangement_X \end{cases}.
\]

It is clear that $\iota^X$ is injective, so if $Y$ is a face of $\arrangement$ contained in $\vspan(X)$, we write~${(\iota^X)^{-1}(Y)}$ for the corresponding face of $\arrangement^X$.
To simplify notation, we define~$Y^X$ to be $(\iota^X)^{-1}(Y)$ in this case.


\begin{lemma}
	\label{lem_restriction}
    Let $X,Y$ be covectors such that $X \fwle Y$ in $\fw$, and let $Z$ be a covector such that $\vspan(Z) = \vspan(X) \meet_L \vspan(Y)$.
	Then the interval $[X,Y]$ of $\fw$ is isomorphic to the interval $[X^Z,Y^Z]$ of $\fw[\arrangement^Z][X^Z\circ(-Y^Z)]$.
\end{lemma}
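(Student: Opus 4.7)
The plan is to show that the restriction map $W\mapsto W^Z$ gives the claimed isomorphism. I would first check that every $W\in[X,Y]$ lies in $\vspan(Z)$, making $W^Z=(\iota^Z)^{-1}(W)$ well-defined. By \autoref{thm:equivalence1}, the condition $X\fwle W\fwle Y$ amounts to $Y(H)\le W(H)\le X(H)$ for every $H\in\arrangement$. For $H\in\arrangement_Z$, since $\vspan(Z)=\vspan(X)\meet_L\vspan(Y)$ is the smallest intersection subspace containing both $\vspan(X)$ and $\vspan(Y)$, the hyperplane $H$ (which contains $\vspan(Z)$) also contains both, so $X(H)=Y(H)=0$, hence $W(H)=0$ as well. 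Therefore $W\subseteq\bigcap_{H\in\arrangement_Z}H=\vspan(Z)$. The same bookkeeping shows that $X^Z\circ(-Y^Z)$ is indeed a region of $\arrangement^Z$: if $(X^Z\circ(-Y^Z))(H')=0$ for some $H'=H\cap\vspan(Z)$ with $H\in\arrangement\setm\arrangement_Z$, then $X(H)=Y(H)=0$, so $H\supseteq\vspan(X)\cup\vspan(Y)$ and hence $H\supseteq\vspan(Z)$, putting $H$ in $\arrangement_Z$, a contradiction.

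The heart of the argument is comparing the two orders under reorientation. Orient the hyperplanes of $\arrangement^Z$ so that $X^Z\circ(-Y^Z)$ is the all-positive region, and let $\epsilon(H')\in\{-,+\}$ record whether this reorientation flips the inherited one. A short case analysis on $X(H)\in\{-,0,+\}$ shows that $\epsilon(H')=-$ occurs precisely when $X(H)=-$, and in that case $Y(H)\le W(H)\le X(H)=-$ forces $W(H)=-$ for every $W\in[X,Y]$. In other words, the flipped hyperplanes are \emph{frozen} throughout the interval, contributing only trivial equalities on both sides of any comparison. On the non-flipped hyperplanes the new sign of $W^Z(H')$ coincides with $W(H)$, while on hyperplanes in $\arrangement_Z$ every covector in the interval vanishes; consequently the relation $U\fwle V$ in $\fw$ reduces componentwise to $U^Z\fwle V^Z$ in $\fw[\arrangement^Z][X^Z\circ(-Y^Z)]$, and vice versa.

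Putting these pieces together, $W\mapsto W^Z$ is an order-preserving injection (injectivity being inherited from $\iota^Z$). Surjectivity onto $[X^Z,Y^Z]$ follows by running the same componentwise translation in reverse: given $W'\in[X^Z,Y^Z]$, the face $\iota^Z(W')$ satisfies $Y(H)\le\iota^Z(W')(H)\le X(H)$ for every $H\in\arrangement$, the non-flipped hyperplanes being immediate, the frozen ones handled by the observation above, and the hyperplanes in $\arrangement_Z$ trivial since everything vanishes there. The only delicate point is keeping the reorientation straight when switching from $\arrangement$ to $\arrangement^Z$ with the new base region $X^Z\circ(-Y^Z)$, and the frozen-hyperplane observation is what makes this bookkeeping painless.
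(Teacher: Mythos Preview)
The proposal is correct and follows essentially the same route as the paper: restrict to $\vspan(Z)$ by observing that $W(H)=0$ for all $H\in\arrangement_Z$ whenever $W\in[X,Y]$, verify that $X^Z\circ(-Y^Z)$ is a region of $\arrangement^Z$, and then check that the componentwise covector comparison is preserved under the restriction map. Your explicit ``frozen hyperplane'' observation (that the only hyperplanes flipped in passing to the new base region are those with $X(H)=-$, on which every $W\in[X,Y]$ is constant) is exactly the content of the paper's more compressed line ``$X^Z\circ(-Y^Z)(H)=X^Z(H)$ if $X^Z(H)\ne 0$, and $X^Z\circ(-Y^Z)(H)=+$ otherwise,'' from which it jumps directly to the conclusion; you have simply unpacked the reorientation bookkeeping that the paper leaves implicit.
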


\begin{proof}
	For $H \in \arrangement$ such that $X(H)=0=Y(H)$, any covector $W$ in the interval~$[X,Y]$ must satisfy $W(H)=0$.
    Consequently,~${\vspan(W) \subseteq \vspan(Z)}$ for~${W\in[X,Y]}$, so the restriction $W^Z$ is a covector of $\arrangement^Z$.
    Moreover, the map~${[X,Y] \rightarrow \covectors(\arrangement^Z)}$ is a bijection onto its image.
	Since $\vspan(X^Z) \meet_L \vspan(Y^Z) = \vspan(Z)$ in the intersection lattice, the concatenation $X^Z \circ (-Y^Z)$ is a region of~$\arrangement^Z$.
	Moreover, for $H \in \arrangement^Z$, we have $X^Z \circ (-Y^Z)(H) = X^Z(H)$ if $X^Z(H) \ne 0$, and $X^Z \circ (-Y^Z)(H)=+$ otherwise.
    Hence, $X^Z \fwle Y^Z$ in $\fw[\arrangement^Z][X^Z \circ (-Y^Z)]$, and if $W \in [X,Y]$, then~${W^Z \in [X^Z,Y^Z]}$. Conversely, every element of $[X^Z,Y^Z]$ is the restriction of some covector in $[X,Y]$.
\end{proof}

We are now ready to prove the main theorem. We make use of the fact that the proper part of the face lattice of a polyhedral cone of dimension $d$ is homeomorphic to a sphere of dimension $d-2$.

\begin{theorem}
	\label{thm_weak_top}
	Let $\arrangement$ be an arrangement with base region $B$. Let $X,Y$ be covectors such that $X \fwle Y$ and set $Z = X \cap Y$. 
	If $X \fwle Z \fwle Y$ and ${Z = X_{-Z} \cap Y}$, then the order complex of the open interval $(X,Y)$ in $\fw$ is homotopy equivalent to a sphere of dimension $\dim(X)+\dim(Y)-2\dim(Z)-2$. 
	Every other interval is contractible.
\end{theorem}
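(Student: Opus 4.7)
My plan is to prove the theorem by first reducing the interval via the rotation and restriction lemmas of \autoref{sec:lattice}, then handling the sphere and contractible cases separately using the tools from \autoref{subsec:prelim} together with \autoref{thm_edelman_walker}.

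Applying \autoref{lem_rotation} with new base region $B' = m_X$, I may assume $m_X = B$, so that both $X$ and $Z = X \cap Y$ are faces of the base region; in particular the inequality $X \fwle Z$ becomes automatic. Applying \autoref{lem_restriction} with a covector $W$ satisfying $\vspan(W) = \vspan(X) \meet_L \vspan(Y)$ in $L(\arrangement)$, I further reduce to an arrangement where this subspace fills the whole ambient space. These reductions preserve the isomorphism type of $[X, Y]$ in $\fw$ and the dimensions $\dim X, \dim Y, \dim Z$. Under the sphere-case hypotheses, the equalities $m_X = m_Z$ and $M_Y = M_Z$ from \autoref{prop:two_implies_third} combine with the translation of $Z = X_{-Z} \cap Y$ (via \autoref{lem:roots_to_covs}, equivalent to: no hyperplane $H$ satisfies $X(H) = +$ and $Y(H) = -$) to force $M_X \prle m_Y$, so that the closed interval $[M_X, m_Y]_{\prle}$ in the poset of regions contains precisely the regions of $(X, Y)$.

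In the sphere case, I identify the unique face $F_0$ whose facial interval in $\pr$ equals $[M_X, m_Y]$; the existence of $F_0$ follows from $Z = X_{-Z} \cap Y$ via \autoref{lem:covector_and_sepSet}. By \autoref{thm_edelman_walker}, the open interval $(M_X, m_Y)_{\prle}$ in the poset of regions is homotopy equivalent to $S^{\codim F_0 - 2}$. I then organize the non-region elements of $(X, Y)$ into pieces that suspend this sphere: the faces of $X$ strictly containing $Z$ and their reorientations (below the region layer in $\fw$), and the faces of $Y$ strictly containing $Z$ (above). Using \autoref{lem_link_del} iteratively to remove faces with contractible links reduces $(X, Y)$ to a subcomplex identified with the proper part of the face lattice of a polyhedral cone of dimension $\dim X + \dim Y - 2\dim Z$; equivalently, one constructs a closure operator on $(X, Y)$ and applies \autoref{lem_closure_top}. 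The main obstacle lies here: verifying that the suspension (or closure operator) indeed matches the expected dimension through all reductions requires careful analysis using the cover relations of \autoref{cor:fwo_covers} and the root inversion sets from \autoref{subsec:root_inversion_sets}.

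In the contractible case (at least one hypothesis fails), I would produce an absorbing element in $(X, Y)$ making its order complex a cone. If $X \not\fwle Z$ or $Z \not\fwle Y$, a covector composition such as $X \circ Y$ or $Y \circ X$ provides such an element comparable to every element of the interval. If $X \fwle Z \fwle Y$ holds but $Z \ne X_{-Z} \cap Y$, there is a hyperplane $H$ with $X(H) = +$ and $Y(H) = -$; the elimination axiom from \autoref{defn:oriented_matroid} applied at $H$ produces a covector strictly between $X$ and $Y$ that is comparable to every element of $(X, Y)$, giving the cone structure and hence contractibility.
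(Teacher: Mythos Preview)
Your reductions via \autoref{lem_rotation} and \autoref{lem_restriction} are fine and parallel the paper's, though the paper localises at $Z$ rather than rotating to $m_X=B$; either normalisation works.

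The serious gap is in the contractible case. Your claim that $(X,Y)$ has a cone point (an ``absorbing element'' comparable to every $W$ in the open interval) is false in general. For instance $X\circ Y$ satisfies $(X\circ Y)(H)=X(H)\ge W(H)$ when $X(H)\ne 0$ but $(X\circ Y)(H)=Y(H)\le W(H)$ when $X(H)=0$, so it is typically incomparable to $W$. Likewise, the covector produced by the elimination axiom at a hyperplane $H$ with $X(H)=+$, $Y(H)=-$ has unconstrained entries on the rest of $S(X,Y)$ and need not be comparable to anything. The paper's argument here is genuinely inductive: one first strips off the proper faces of $Y$ lying in $(X,Y)$ using \autoref{lem_poset_reduction} (each such $(X,Y')$ is contractible by induction on $\dim Y$), then applies the closure operators $f(W)=W\circ Y$ and $g(W)=W\circ X$ to retract onto the subposet of regions, and only \emph{then} invokes \autoref{thm_edelman_walker} on that region interval (which is non-facial precisely in the contractible case). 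No single cone point does this job.

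Your sphere-case plan also diverges from the paper and, as you acknowledge, is not carried through. The paper's mechanism is cleaner than what you sketch: one does the link/deletion decomposition of $\Delta((X,Y))$ at the single vertex $Z$. The same inductive argument with the closure operators $f,g$ shows $\del_\Delta(\{Z\})$ is contractible, so by \autoref{lem_link_del} the interval is homotopy equivalent to $\susp\bigl(\lk_\Delta(\{Z\})\bigr)$. But $\lk_\Delta(\{Z\})=\Delta((X,Z))\ast\Delta((Z,Y))$, and each factor is the proper part of a face lattice of a cone, hence a sphere of the right dimension; the join and suspension give the claimed $\Sbb^{\dim X+\dim Y-2\dim Z-2}$. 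Your attempt to route the computation through the region subposet $[M_X,m_Y]_{\prle}$ and \autoref{thm_edelman_walker} runs into the difficulty that, under the sphere hypotheses and after your reductions, this interval collapses (one checks $S(M_X)=S(m_Y)$), so it carries no topological information by itself.
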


\begin{proof}
	Let $X,Y \in \fw$ such that $X \fwle Y$ and set $Z = X \cap Y$. 
	Let $Q$ be the open interval $(X,Y)$ in the facial weak order. 
	We determine the topology of $Q$.

	If $Z=X$ then $Q$ is an interval in the face lattice, so it is homeomorphic to a sphere of dimension $\dim(Y)-\dim(X)-2$, as desired. 
	Similarly, if $Z=Y$, then $Q$ is homeomorphic to a sphere of dimension $\dim(X)-\dim(Y)-2$. Hence, we may assume $X,Y,Z$ are all distinct.

	By \autoref{lem:localization}, we may assume that $Z=0$ since the poset $Q = (X,Y)$ is isomorphic to $(X_Z,Y_Z)$.
	Hence, we write $-X$ for the covector $X_{-Z}$.
	By \autoref{lem_restriction}, we may assume that $\vspan(X) \meet_L \vspan(Y) = V$ in the intersection lattice.
    In particular,~${X \circ Y}$ and $Y \circ X$ are regions. We will make these assumptions for most of the proof unless indicated otherwise.

	Assume $Z \in Q$ and $Z = (-X) \cap Y$ both hold, and let $\Delta$ be the order complex of $Q$. We prove that $\del_{\Delta}(\{Z\})$ is contractible by induction on $\dim(Y)$.

    Let $L_{>Y}$ denote the set of faces strictly less than the face~$Y$ in the face lattice, \ie~${L_{>Y} = \set{Z}{Z >_L Y} = \set{Z}{Z \subsetneq Y}}$.
	Applying the inductive hypothesis with \autoref{lem_poset_reduction}, the poset $Q \setm \{Z\}$ is homotopy equivalent to $Q \setm L_{>Y}$. We note that this statement is vacuously true if $\dim(Y) = 1$. Set $P = Q \setm L_{>Y}$. 
	Define a map~$f$ on the closed interval $[X,Y]$ of the facial weak order, where $f(W) = W \circ Y$. 
	This is well-defined by \autoref{lem:F_leq_G_=>_F_leq_FcG_leq_GcF_leq_G}.
	We claim that $f$ is a closure operator.
    It is clear that~$f$ is idempotent by properties of composition. 
	Since $Z \fwl Y$ in the facial weak order, every entry of $Y$ is either $0$ or $-$. 
	Hence,~$f$ can only change some $0$ entries of $w$ to $-$, so it is order preserving and increasing. 
	Since $W \circ Y \subseteq Y$ only if~$W$ is a face of $Y$, the operator $f$ restricts to $P$. 
    \autoref{lem_closure_top} implies that~${P \simeq f(P)}$.

	Now define $g$ on $[X,Y]$ where $g(W)=W\circ X$. 
	This map is a dual closure operator. 
	Assume that $W \in Q$ such that $g \big( f(W) \big) = X$. 
	Then $f(W)$ must be a face of $X$. 
	Since $W$ is a face of $f(W)$, we deduce that $W$ is a face of $X$. 
	The set of faces of $X$ intersected with $[X,Y]$ is an order ideal of $[X,Y]$ in the facial weak order. 
	Since $(-X) \cap Y = 0$, the composite $X \circ Y$ is a region distinct from~$X$. 
    Then~${X \circ Y \fwle W \circ Y}$, so $W \circ Y$ is not a face of $X$. 
	This is a contradiction. 
	Hence, $g$ restricts to $f(P)$, and we conclude that $P \simeq g \big( f(P) \big)$.

	Since $X$ and $Y$ have disjoint supports, the composite $Y\circ X$ is a region. 
	Hence, the image of $g\circ f$ is the set of regions in $Q$. 
	This set of regions has a maximum element, namely $Y\circ X$. 
	Hence, it is contractible, as desired.

	Since $\del_{\Delta}(\{Z\})$ is contractible, we conclude that $\Delta$ is homotopy equivalent to the suspension of $\lk_{\Delta}(\{Z\})$ by \autoref{lem_link_del}. 
    By definition,~${\lk_{\Delta}(\{Z\}) = \Delta((X,Z))*\Delta((Z,Y))}$.
	But $\Delta((X,Z))$ (resp.~$\Delta((Z,Y))$) is the order complex of the proper part of the face lattice of the cone $X$ (resp.~$Y$). 
	Hence, $\Delta((X,Z))$ is homeomorphic to $\Sbb^{\dim(X)-\dim(Z)-2}$.
	Since $\Sbb^p*\Sbb^q\cong\Sbb^{p+q+1}$ and $\susp(\Sbb^p) \simeq \Sbb^{p+1}$, we have
	\begin{align*}
		\Delta
		& \simeq \susp \big( \lk_{\Delta}(\{Z\}) \big) \\
		& \simeq \susp \big( \Delta((X,Z))*\Delta((Z,Y)) \big) \\
		& \simeq \susp \big( \Sbb^{\dim(X)-\dim(Z)-2}*\Sbb^{\dim(Y)-rk(Z)-2} \big) \\
		& \simeq \Sbb^{\dim(X)+\dim(Y)-2\dim(Z)-2}
	\end{align*}

	Now assume that $Z\notin Q$. We prove that $Q$ is contractible.

	Since $Z$ is not between $X$ and $Y$, there exists $H\in\arrangement$ such that $Z(H)=0$ and either $X(H)=Y(H)=-$ or $X(H)=Y(H)=+$. 
	Replacing $B$ with $-B$, we may assume without loss of generality that $X(H)=Y(H)=-$ and $Z(H)=0$. 
	If~$W$ is any face of $Y$ with $W \fwle Y$, then $W(H)=-$. 
	But $(W \cap X) \subseteq Z$, so $W \cap X$ is not between $X$ and $W$. 
	By induction, $Q$ is homotopy equivalent to $Q \setm L_{>Y}$. 
	Let $P = Q \setm L_{>Y}$. As before, we consider operators $f$ and $g$ on $[X,Y]$. 
	These two operators again restrict to $P$, and $g \big( f(P) \big)$ is the subposet of regions in $Q$. 
	If~$Y$ is not a region, then $Y \circ X$ is the unique maximum element of $g \big( f(P) \big)$. 
	If~$X$ is not a region, then $X \circ Y$ is the unique minimum element of $g \big( f(P) \big)$. 
	In either case, the interval $Q=(X,Y)$ is contractible. 
	If both $X$ and $Y$ are regions, then~$Q$ is contractible by \autoref{thm_edelman_walker} since $g \big( f(P) \big)$ is an open interval of the poset of regions that is not facial.

	Finally, assume that $Z \in Q$ but $Z \ne X_{-Z} \cap Y$. We prove that $Q$ is contractible.

	Assume $X$ is not a region.
	Then $Y \cap (-X)$ is a proper face of $Y$, as otherwise there would exist a hyperplane $H\in\arrangement$ containing both $X$ and $Y$.
    Let
    \[
        {P = Q \setm \set{W \in L_{>Y}}{(-X) \cap W \ne Z}}
        \text{ and } P_{>Z} = \set{W \in P}{W > Z}.
    \]
	By induction, $P$ is homotopy equivalent to $Q$.
	Then~$P_{>Z}$ is contractible since $L_{>Y}\setm\{Z\}$ is the proper part of the face lattice of the cone $Y$, and $P_{>Z}$ is the deletion of some face from this sphere.

	Consequently, $P \simeq P \setm \{Z\}$. 
	We prove that $P \setm \{Z\}$ is contractible by induction on $\dim(Y)$. 
    We have already proved that $(X,Y^{\prime})\setm\{Z\}$ is contractible for~${Y^{\prime} \in L_{>Y} \cap P \setm \{Z\}}$. 
	Hence, $P \setm \{Z\} \simeq P \setm L_{>Y}$. 
    Set $P^{\prime} = P \setm L_{>Y}$. Using the operators $f$ and $g$ from before, we deduce that $Q$ is homotopy equivalent to~${g \big( f(P^{\prime}) \big)}$.
    Since $X$ is not a region, $g \big( f(P^{\prime}) \big)$ has a minimum element, namely~${X \circ Y}$.
	Hence, it is contractible.

	If $X$ is a region but $Y$ is not a region, then a dual argument shows that $Q$ is contractible. 
	Hence, we may assume both $X$ and $Y$ are regions. 
	Since this is the last remaining case, we deduce that for $W\in Q$ the interval $(X,W)$ is not contractible if and only if $W$ is an upper face of $X$. 
	Hence, $Q$ is homotopy equivalent to $L_{>X}$. 
	This set of covectors has a maximum element in $Q$, namely $W=Z$. 
	Hence, $Q$ is contractible.
\end{proof}


\subsection{M\"obius function}

Recall that the \definition{M\"obius function} of a poset $P$ is the function $\mu: P \times P \to \Z$ defined inductively by
\[
    \mu(x,y) = \begin{cases}
        1 & \text{if } x = y,\\
        -\sum_{x \leq z < y} \mu(x,z) & \text{if } x < y,\\
        0 & \text{otherwise.}
    \end{cases}
\]
For more information on the M\"obius function we refer the reader to \cite{Stanley_EnumerativeCombinatorics}.

We recall that the M\"obius function can be restated using its homotopy type.
In fact, $\mu(x,y) + 1 = \sum (-1)^i \rk H_i(\Delta( (x,y)))$ where $H_i(\Delta( (x,y)))$ is the simplicial $i$th homology group and $\Delta( (x,y))$ is the order complex for the open interval $(x,y)$.
The rank of the $i$th homology group is sometimes referred to as the \definition{$i$th Betti number}.

Recall further that a contractible interval $(x,y)$ has trivial homology (homotopy equivalent to a point).
Thus $H_0(\Delta( (x,y)))  \iso \Z$ and~${H_i(\Delta( (x,y))) \iso \{0\}}$ for all~${i > 0}$, \ie $\sum (-1)^i \rk H_i(\Delta( (x,y))) = 1$.
Therefore we have $\mu(x,y) = 0$.
Additionally, recall that a sphere $\Sbb^n$ has homology $H_0(\Sbb^n) \iso \Z$, $H_n(\Sbb^n) \iso \Z$ and~${H_i(\Sbb^n) \iso \{0\}}$ for $0 < i < n$, \ie $\sum (-1)^i \rk H_i(\Delta( (x,y))) = 1 + (-1)^n$.
Therefore if our interval $(x,y)$ is homotopy equivalent to $\Sbb^n$ we have $\mu(x,y) = (-1)^n$.
For more information on how the M\"obius function relates to homology we refer the reader to the book by Stanley \cite{Stanley_EnumerativeCombinatorics}, the book by Munkres \cite{Munkres_ElementsOfAlgebraicToplogy}, or the chapter by Bj\"orner \cite{Bjorner_topologicalMethods}.

As a consequence to \autoref{thm_weak_top} we have the following corollary.

\begin{corollary}
    Let $\arrangement$ be an arrangement with base region $B$.
    Let $X,Y$ be covectors such that $X \fwle Y$ and set $Z = X \cap Y$. 
    \[
        \mu(X,Y) = \begin{cases}
            (-1)^{\dim(X) + \dim(Y)} & X \fwle Z \fwle Y \text{ and } Z = X_{-Z} \cap Y\\
            0 & \text{otherwise.}
        \end{cases}
    \]
\end{corollary}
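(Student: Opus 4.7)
The statement is a direct corollary of \autoref{thm_weak_top} combined with the standard homological interpretation of the Möbius function, which is in fact summarized in the paragraph preceding the corollary. Concretely, the plan is the following.

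First, I would recall Philip Hall's theorem, which states that for any bounded poset interval $(x,y)$,
\[
\mu(x,y) \;=\; \tilde{\chi}\big( \Delta((x,y)) \big) \;=\; \sum_{i \ge 0} (-1)^i \rk H_i\big(\Delta((x,y))\big) - 1.
\]
In particular, $\mu$ is a homotopy invariant of the order complex of the open interval, so it is enough to read it off from the homotopy type given by \autoref{thm_weak_top}.

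Next, I would split into the two cases of \autoref{thm_weak_top}. In the ``otherwise'' case, the open interval $(X,Y)$ is contractible, hence has the homology of a point, so $\tilde\chi(\Delta((X,Y))) = 0$ and therefore $\mu(X,Y)=0$. In the distinguished case where $X \fwle Z \fwle Y$ and $Z = X_{-Z} \cap Y$, the open interval $(X,Y)$ is homotopy equivalent to a sphere of dimension
\[
d \;=\; \dim(X) + \dim(Y) - 2\dim(Z) - 2.
\]
The reduced Euler characteristic of $\Sbb^d$ is $(-1)^d$, so $\mu(X,Y) = (-1)^d$.

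Finally, I would simplify the exponent using the parity observation $(-1)^{-2\dim(Z)-2} = 1$, which yields
\[
\mu(X,Y) \;=\; (-1)^{\dim(X)+\dim(Y)-2\dim(Z)-2} \;=\; (-1)^{\dim(X)+\dim(Y)},
\]
matching the claimed formula. There is essentially no obstacle here: the entire content lies in \autoref{thm_weak_top}, and the present corollary is a purely formal translation from the topological description of intervals to their Möbius values via the reduced Euler characteristic.
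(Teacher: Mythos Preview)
Your proposal is correct and is exactly the argument the paper intends: the corollary is stated without a separate proof, and the preceding paragraph already spells out the Philip Hall/Euler characteristic translation you describe. The only step beyond the paper's text is the parity simplification $(-1)^{-2\dim(Z)-2}=1$, which is immediate.
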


\section*{Acknowledgements}
    We thank Nathan Reading and Hugh Thomas for their relevant suggestions on an initial version of this paper.


\nocite{*}
\bibliographystyle{alpha}
\bibliography{paper}
\label{sec:biblio}

\end{document}